\DeclareMathOperator*{\arginf}{arg\,inf}
\newcommand{\R}{{\mathbb R}}
\newcommand{\C}{{\mathbb C}}
\newcommand{\Z}{\mathbb Z}
\newcommand{\Mcal}{\mathcal {M}}
\newcommand{\T}{\mathcal{T}}
\newcommand{\bfd}{\mathbf{d}}
\newcommand{\supp}{\mathrm{supp}}
\newcommand{\Fcal}{\mathcal{F}}
\newcommand{\F}{\mathcal{F}}
\newcommand{\tphiT}{\overline{\phi}_\T}
\newcommand{\tMT}{\widetilde{\Mcal}_\T}
\newcommand{\rowspan}{\mathrm{rowspan}}
\newtheorem{theorem}{Theorem}
\newtheorem{lemma}[theorem]{Lemma}
\newtheorem{prop}[theorem]{Proposition}
\newtheorem{cor}[theorem]{Corollary}
\theoremstyle{definition}
\newtheorem{definition}[theorem]{Definition}
\newtheorem{example}[theorem]{Example}
\newtheorem{remark}[theorem]{Remark}
\newtheorem{question}{Question}
\numberwithin{theorem}{section}
\newcommand{\ch}{\text{ch}}
\newcommand{\Scal}{\mathcal{S}}
\newcommand{\p}{p^*}
\newcommand{\q}{q^*}
\renewcommand{\alpha}{a}
\newcommand{\multipart}{multipartition }
\DeclareMathOperator*{\argmin}{arg\,min}
\DeclareMathOperator*{\argmax}{arg\,max}
\newcommand{\ds}{\displaystyle}
\title{\textbf{Rational partition models under iterative proportional scaling}}
\author[1,2]{Jane Ivy Coons}
\author[3]{Carlotta Langer}
\author[4]{Michael Ruddy}
\affil[1]{St John’s College, University of Oxford, United Kingdom}
\affil[2]{Mathematical Institute, University of Oxford, United Kingdom}
\affil[ ]{\textit {jane.coons@maths.ox.ac.uk}}
\affil[3]{Hamburg University of Technology, Hamburg, Germany}
\affil[ ]{\textit {carlotta.langer@tuhh.de}}
\affil[4]{Qventus}
\affil[ ]{\textit {mruddy@qventus.com}}
\date\today
\begin{document}
\maketitle

\begin{abstract}
In this work we investigate \emph{partition models}, the subset of log-linear models for which one can perform the iterative proportional scaling (IPS) algorithm to numerically compute the maximum likelihood estimate (MLE). Partition models include families of models such as hierarchical models and balanced, stratified staged trees. We define a sufficient condition, called the Generalized Running Intersection Property (GRIP), on the matrix representation of a partition model under which IPS algorithm produces the exact MLE in one cycle. Additionally we connect the GRIP to the toric fiber product and to previous results for hierarchical models and balanced, stratified staged trees. This leads to a characterization of balanced, stratified staged trees in terms of the GRIP.
\end{abstract}

\textbf{Keywords:} {\small Maximum Likelihood Estimation, Iterative Proportional Scaling, Log-Linear Models, Hierarchical Models, Staged Tree Models, Toric Fiber Product }
\section{Introduction}

The iterative proportional scaling (IPS) algorithm is a simple and efficient numerical algorithm for computing the maximum likelihood estimate (MLE) for certain families of log-linear statistical models, which we call \emph{partition models}. The IPS algorithm is widely used in survey statistics, such as in \cite{Chang2021, Hauer2019, Yiannoutsose2013906118}, and has been researched in connection to optimal transport problems in its more restricted form as Sinkhorn's algorithm \cite{Berman2020, Pey2019}. Partition models include hierarchical models, which have been the subject of much study in connection with the IPS algorithm \cite{ET09, SH74, XSS16}. Maximum likelihood estimation for log-linear models and its relationship to algebraic and combinatorial objects is also of interest in algebraic statistics. For example in \cite{AKRS21}, the authors connect invariant theory and algorithms to compute the MLE including a form of the IPS algorithm. Many recent works have also explored the number of complex critical points of the log-likelihood function, also known as the ML-degree (see \cite{ABB19,CHKS06,CMR20,DMS19, Huh13, Huh14,STZ20}).

From an information-geometric perspective, calculating the MLE for a partition model can be described as projecting to linear families defined by the partitions of the matrix representing the model (see Definitions \ref{def:loglinear} and \ref{def:multipart}). Given a data vector and an estimate on the model, the IPS algorithm updates this estimate at each step by projecting onto a different linear family, converging towards the MLE. We say that the IPS algorithm has completed \emph{one cycle} after it has iterated through each linear family exactly once.

At each step of the IPS algorithm, the estimate is a rational function of the data vector. Thus in the case of one-cycle convergence, the MLE itself is a rational function of the data vector. Models for which the MLE can always be described as a rational function of the data vector are called \emph{rational} and are a subject of recent interest \cite{CS20, DMS19, HS14}. This is in contrast to what is sometimes called the \emph{generalized}\footnote{Many works will refer to the generalized IPS algorithm as simply ``the IPS algorithm'' while referring to its predecessor, the algorithm we consider in this work, as the ``classical IPS algorithm.''} IPS algorithm, another numerical algorithm for computing the MLE for log-linear models, which does not produce a rational function at each step, yet can be applied to any log-linear model.

In this work we are interested in the question, ``When does the IPS algorithm exactly produce the MLE after one cycle?'' We first note that the matrix representation of a partition model heavily influences the outcome of the IPS algorithm (see Example \ref{ex:DiffRepDiffIPS}). In \cite{SH74} the author defines the Running Intersection Property (RIP) for hierarchical models, which gives sufficient conditions on the matrix representation of a hierarchical model so that the IPS algorithm always produces the MLE exactly in one cycle. The author shows that for decomposable hierarchical models, there always exists such a representation. Drawing inspiration from the RIP, we define the Generalized Running Intersection Property (GRIP) on the matrix representations of general partition models and show that it gives sufficient conditions for the IPS algorithm to always produce the MLE exactly in one cycle. 
In the case of hierarchical models we show that the RIP is a special case of the GRIP (see Prop. \ref{prop:rip2grip}). We also connect previous work on the rationality of hierarchical models with the GRIP.

We additionally investigate another family of partition models: balanced and stratified staged trees. Staged tree models, also known as chain event graphs, are probability tree models that encode conditional independence relationships among events. They have been the subject of much study from an algebraic perspective in recent years \cite{AD19, DMS19, GMN21, SA08}.  In this work we connect the GRIP to balanced and stratified staged trees.
One can associate a tree to any matrix representation of a partition model, as described in Section \ref{ssec:StagedTrees}. 
We show that if a matrix representation satisfies the GRIP, then the associated tree must be a balanced and stratified staged tree. Moreover we show that the unique matrix with no repeated columns associated to a balanced and stratified staged tree satisfies the GRIP. 
In \cite{DS21}, the authors show that decomposable hierarchical models are also balanced staged tree models and claim that balanced staged trees are a natural generalization of such models. Our work also implies this result and supports this claim.

We make use of the toric geometry of a partition model in order to determine whether IPS achieves one-cycle convergence on a given parametrization of the model.
In particular, we show that when a parametrization of the model satisfies the GRIP, the model can be written as a \emph{toric fiber product} of smaller rational partition models \cite{tfp07}.
This in turn allows us to write the MLE of the larger partition model as a normalized product of the MLEs of the smaller models \cite{AKK20}, which facilitates our proof of one-cycle convergence.

Our work draws novel connections between the geometry of a rational partition model and the performance of the iterative proportional scaling algorithm applied to it. Moreover, it gives sufficient conditions for a partition model to be rational. If the partition model has a parametrization which satisfies the GRIP, then it is rational and one can read the MLE directly from the matrix of this parametrization (Corollary \ref{cor:GRIPMLE}). These results highlight the significant role that model representation plays in the performance of numerical computations of MLEs. We include a diagram of our main results in Figure \ref{Fig:Diagram}, in which the matrix $A$ is an integer matrix representing a log-linear statistical model.

\begin{center}
\scalebox{0.825}{
\begin{tikzpicture}[>=stealth, block/.append style={rectangle, draw=gray!80, minimum size=12pt}, rounded corners]
\draw[line width =0.5mm,  ->] (-4.5,0) -- node[above,midway] {\small Prop. \ref{prop:rip2grip}} (-2,0);
\draw[line width =0.5mm,  ->] (1.5,0) -- node[above,midway] {\small Thm. \ref{thm:GRIPOneCycle}} (4,0);
\draw[line width =0.5mm,  ->] (-0.5, 0) -- node[left,midway, yshift=5mm] {\small Thm. \ref{thm:IdealEqualsTFP}} (-0.5,1.75);
\draw[line width =0.5mm,  ->] ( 0.5, 2.5) -- node[right,midway,yshift=-2.5mm] {Prop. \ref{thm:TFP2GRIP}} (0.5, 0.75);
\draw[line width =0.5mm,  ->] (0.5, 0) -- node[right,midway,yshift=-2.4mm] {\small Thm. \ref{thm:GRIP2balanced}} (0.5,-1.75);
\draw[line width =0.5mm,  ->] (-0.5, -2.5) -- node[left,midway, yshift=5mm] {\small Thm. \ref{thm:balanced2GRIP}} (-0.5, -0.75);
\node[block,line width=0.5mm, fill = white, align=left] at (-6.5, 0) {$A$ represents a hierarchical \\ model satisfying the RIP \\ {\small \quad \textit{Definition \ref{def:RIP}} } };
\node[block,line width=0.5mm,fill = white, align=left] at (0,0) {$A$ satisfies the\textbf{ GRIP} \\ {\small \quad \textit{Definition \ref{def:GRIP}} } };
\node[block,line width=0.5mm,fill = white, align=left] at (6.5,0) {IPS converges \\ to the MLE in one cycle  };
\node[block,line width=0.5mm,fill = white, align=left] at (0,2.5) {The vanishing ideal $I(A)$  equals an \\ iterated toric fiber product of linear ideals };
\node[block,line width=0.5mm,fill = white, align=left] at (0,-2.5) {The tree associated with $A$ \\ is balanced and stratified };
\end{tikzpicture} }
\captionof{figure}{Diagram of the main results of this work.} \label{Fig:Diagram}
\end{center}

The work is structured as follows. In Section \ref{sec:prelim} we introduce some basic notions and previous results surrounding log-linear models, the MLE, the IPS algorithm, and some important partition model families. In particular, we define partition matrices, \multipart matrices, and partition models in \ref{ssec:loglinear}; describe the interative proportional scaling algorithm in \ref{ssec:IPS}; introduce the toric fiber product in \ref{ssec:TFP}; and discuss staged tree models and hierarchical models in \ref{ssec:StagedTrees} and \ref{ssec:hierarchical} respectively. In Section \ref{sec:GRIP} we introduce the GRIP and show this implies that the IPS algorithm produces the MLE after one cycle (Theorem \ref{thm:GRIPOneCycle}). In Section \ref{Sec:StagedTrees} we investigate the connection between the GRIP and staged tree models, showing that the GRIP characterizes a subset of such models. In Section \ref{Sec:HierarchicalModels} we show that the GRIP is indeed a generalization of the RIP. We end the work with a discussion of open problems stemming from our results in Section \ref{sec:disc}.

\section{Preliminaries}\label{sec:prelim}
\subsection{Log-Linear Models}\label{ssec:loglinear}
In this section we introduce the class of models to which we may apply the iterative proportional scaling algorithm in order to compute the maximum likelihood estimate. We start with some background on log-linear models before defining our main object of interest, \textit{partition models}.

Consider an $n\times m$ matrix $A=(a_{ij})$ such that $a_{ij} \in \Z_{\geq 0}$ and all column sums $\sum_{i=1}^d a_{ij}$ are equal. Then the matrix $A$ defines a homogeneous polynomial map $\phi_A:\R^n\rightarrow\R^m$ where
\begin{equation}\label{eq:AToricMap}
\phi_A(t_1,\hdots,t_n) = \left(\prod_{i=1}^n t_i^{a_{i1}},  \prod_{i=1}^n t_i^{a_{i2}},\hdots, \prod_{i=1}^n t_i^{a_{im}}\right).
\end{equation}
Let $\Delta_{m-1} \subset \R^m$ denote the open $(m-1)$-dimensional probability simplex. 
The {\bf support} of a row $\alpha$ of $A$, denoted $\mathrm{supp}(\alpha)$ is the set of column indices in which $\alpha$ has a nonzero entry. 

\begin{definition}\label{def:loglinear}
The \textbf{log-linear} model, denoted $\Mcal_A$, associated to the integer matrix $A$ described above is the intersection of the Zariski closure of the image of $\phi_A$ with the open probability simplex; that is,
$\Mcal_A = \overline{\text{Im}(\phi_A)} \cap \Delta_{m-1}.$
This is the discrete exponential family whose sufficient statistics are given by the rows of $A$. Hence it can be written as
\[
\Mcal_A = \left\lbrace p \in \Delta_{m-1} \vert \, \text{log} \, p \in \rowspan(A) \right\rbrace.
\]
\end{definition}

We note that several integer matrices with the vector of all ones in their rowspan may give rise to the same log-linear model.
In this work we are concerned with the relationship between the IPS algorithm and a particular choice of matrix $A$. This matrix should be thought of as a particular representation of the log-linear model $\Mcal_A$.

Note that the map $\phi_A$ is a monomial map. Hence the Zariski closure of $\Mcal_A$ is a \textit{toric variety}; for this reason the model $\Mcal_A$ is also referred to as a \textit{toric model} in the algebraic statistics literature. Let $I(A)$ denote the ideal of polynomials that vanishes on $\Mcal_A$. The following proposition, adapted from \cite[Prop~6.2.4]{sullivant2018} describes the relationship between the matrix $A$ and the ideal of polynomials vanishing on $\Mcal_A$.

\begin{prop}\label{Prop:SameRowspan}
Let $A \in \Z^{n \times m}$. 
 Then the vanishing ideal of $\Mcal_A$,
\begin{equation}\label{Eqn:ToricIdeal}
I(A) = \langle p^u - p^v \, |\, u,v \in \Z_{\geq 0}^m \,\,\text{and}\,\, Au=Av\rangle
\end{equation}
is the \textbf{toric ideal} of $A$ and $\Mcal_A$ is the intersection of the variety $V(I(A))$ with the open simplex $\Delta_{m-1}$.
Moreover, if $A' \in \Z^{n' \times m}$ satisfies $\rowspan(A) = \rowspan(A')$, then $I(A) = I(A')$ and $\Mcal_A = \Mcal_{A'}$.
\end{prop}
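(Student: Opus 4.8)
The plan is to establish the three assertions in order: first that $I(A)$ coincides with the binomial (toric) ideal on the right-hand side of \eqref{Eqn:ToricIdeal}, then to read off the variety description of $\Mcal_A$ as a formal consequence, and finally to show that this binomial ideal depends on $A$ only through the integer lattice $\ker_\Z A = \{w \in \Z^m : Aw = 0\}$, which in turn is determined by $\rowspan(A)$.

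For the first assertion, write $I_A$ for the binomial ideal $\langle p^u - p^v : Au = Av\rangle$ and let $\psi$ denote the ring homomorphism $p_j \mapsto \prod_i t_i^{a_{ij}}$ from $\R[p_1,\dots,p_m]$ into the Laurent ring in the $t_i$, whose kernel is the vanishing ideal of the image of $\phi_A$ (and, by density of the positive points, of $\Mcal_A$). The inclusion $I_A \subseteq I(A)$ is a direct substitution: since $\psi(p^u) = t^{Au}$, the relation $Au = Av$ forces $\psi(p^u - p^v) = 0$. For the reverse inclusion I would take $f \in \ker\psi$, group its monomials $p^u$ according to the value of $Au$, and observe that because distinct values of $Au$ yield distinct, hence linearly independent, Laurent monomials $t^{Au}$, the coefficients within each group must sum to zero; each such group is then a linear combination of binomials $p^u - p^{u'}$ with $Au = Au'$, all lying in $I_A$. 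Hence $f \in I_A$ and $I(A) = I_A$; as $\psi$ maps into a domain, this ideal is moreover prime.

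The second assertion is then essentially formal: by construction $V(I(A)) = \overline{\text{Im}(\phi_A)}$, so $\Mcal_A = \overline{\text{Im}(\phi_A)} \cap \Delta_{m-1} = V(I(A)) \cap \Delta_{m-1}$. For the final assertion, note that $Au = Av$ holds exactly when $u - v \in \ker_\Z A$, so the generating set of $I_A$, and therefore $I(A)$, is determined entirely by the lattice $\ker_\Z A$. It thus suffices to show that $\rowspan(A) = \rowspan(A')$ implies $\ker_\Z A = \ker_\Z A'$, and this follows from the identity $\ker_\Z A = \Z^m \cap (\rowspan(A))^\perp$: a vector $w \in \Z^m$ lies in $\ker A$ precisely when it is orthogonal to every row of $A$, that is, to all of $\rowspan(A)$. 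Since the right-hand side depends only on $\rowspan(A)$, equal row spans give equal integer kernels, whence $I(A) = I(A')$ and finally $\Mcal_A = V(I(A)) \cap \Delta_{m-1} = V(I(A')) \cap \Delta_{m-1} = \Mcal_{A'}$.

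The main obstacle is the reverse inclusion in the first step, namely showing that the full vanishing ideal is \emph{generated} by binomials rather than merely containing them; the grouping argument above settles this cleanly over an algebraically closed field. The only point requiring a little care is that $I(A)$, defined here as the vanishing ideal of $\Mcal_A \subseteq \Delta_{m-1}$, agrees with $\ker\psi$, which one obtains from the standard fact that the positive real points of the toric variety are Zariski dense in it. Once the lattice reformulation of $I_A$ is in place, the variety statement and the rowspan invariance are both routine.
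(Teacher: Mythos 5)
Your argument is correct. The paper itself gives no proof of this proposition, citing it as an adaptation of Proposition 6.2.4 of Sullivant's book, and your write-up is exactly the standard argument behind that citation: identify $I(A)$ with the kernel of the monomial substitution $\psi$ (using Zariski density of the positive points), prove binomial generation by grouping monomials according to the value of $Au$, and observe that the resulting lattice $\ker_{\Z} A = \Z^m \cap \rowspan(A)^{\perp}$ depends only on $\rowspan(A)$. No gaps; the one subtlety you flag --- that the vanishing ideal of $\Mcal_A$ agrees with that of the full toric variety --- is indeed the point that needs the density remark, and you handle it appropriately.
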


The integer matrices $A$ that we can apply IPS to must satisfy several conditions, which we outline below.

\begin{definition}\label{def:multipart}
A matrix $A \in \{0,1\}^{n \times m}$ is a {\bf \multipart matrix} if one can partition the rows of $A$ into submatrices $A^1,\dots,A^{k}$ such that in each $A^{\ell}$, $\ell \in \{1, \dots, k\}$, the entries of each column sums to 1. The matrices $A^1,\dots,A^{k}$ are called the {\bf partition matrices} of $A$. Such a matrix gives rise to a \textbf{partition model} $\Mcal_A$ and the monomial map that parametrizes this model as described in \eqref{eq:AToricMap} is a homogeneous and multilinear.
\end{definition}

\begin{remark}
In \cite{Rauh2011} the author uses the name \enquote{partition model} for the log-linear model associated to one partition matrix. He studies these models as an example of a family of log-linear models for which there exists a low constant that bounds the KL-divergence from an arbitrary point in the simplex to these models. Thereby he proves additional properties of the models defined by one partition matrix. 
\end{remark}

Let $A^1, A^2$ be partition matrices, each with $m$ columns.
Let $A^{1}A^{2}$ denote the matrix obtained by stacking $A^{1}$ above $A^{2}$; that is,
\begin{equation}\label{eq:BC}
A^{1}A^{2} := \begin{bmatrix}
A^{1} \\ \hline
A^{2}
\end{bmatrix}.
\end{equation}
From the partitions of $A$ we can build a new matrix, by stacking the partitions as defined above
\begin{equation}
A^{1, \dots, k} := A^{1}A^{2}\dots A^{k} \label{eq:PartitionMatrix}.
\end{equation}
There are many ways to arrange the rows of $A$ to build $A^{1, \dots, k}$; however, since they all have the same collection of rows as $A$ they clearly have the same rowspan, and hence define the same toric model. As we show in the next subsection, a different representation of the same model may affect the convergence of the iterative proportional scaling algorithm.  We assume throughout that $A$ is a \multipart matrix.

\begin{definition}
Let $\alpha^{\ell}_i$ denote the $i$-th row of the $\ell$-th partition, as illustrated in Example \ref{Ex:Partition}.
The  $I^{\ell}_i$ denote the set of column indices $j\in [m]$ such that the $j$-th entry of $\alpha^{\ell}_i$ is equal to 1. That is, $I^{\ell}_i$ is exactly the support of $a^\ell_i$. Let $n_{\ell}$ denote the number of rows of $A^{\ell}$.
\end{definition}

For a fixed partition $A^{\ell}$ and index $j\in \{1,\hdots,m\}$ there is exactly one row $\alpha^{\ell}_i$ with $j$ in its support. We  define the function $\Scal(\ell,j) \in \{1,\hdots, n_\ell\}$ where $\Scal(\ell,j)$ is the index such that $j\in I^\ell_{\Scal(\ell,j)}$. Thus $\alpha^\ell_{\Scal(\ell,j)}$ is the row of $A^\ell$ that has $j$ in its support.

\begin{example} \label{Ex:Partition} The matrix $A$ in Figure \ref{Fig:MatrixA} is an example of a \multipart matrix with three partitions. We have $m = 14, n_{1}, n_{2} = 2$ and $n_{3} = 4$. For instance, the support of the second row of the second partition is $I^{2}_{2} = \{4,5,6,7,11,12,13,14\}$ and the row of the first partition that has the tenth column in its support is $\Scal(1,10) = 2$.
\begin{figure}[H] \centering
\begin{tikzpicture}
\draw[] (0,-3.25) node{$A =
 \begin{pmatrix}
 1 & 1 & 1 & 1 & 1 & 1 & 1  &  \cdot & \cdot & \cdot & \cdot & \cdot & \cdot & \cdot\\
 \cdot & \cdot & \cdot & \cdot & \cdot & \cdot & \cdot  & 1 & 1 & 1 & 1 & 1 & 1 & 1 \\
 \hdashline
 1 & 1 & 1  & \cdot & \cdot & \cdot & \cdot & 1 & 1 & 1  & \cdot & \cdot & \cdot & \cdot\\
 \cdot & \cdot & \cdot  & 1 & 1 & 1 & 1 & \cdot & \cdot & \cdot & 1 & 1 & 1 & 1\\
 \hdashline
  1 & \cdot & \cdot & \cdot & \cdot & \cdot & \cdot & 1 & \cdot & \cdot & \cdot & \cdot & \cdot & \cdot \\
 \cdot & 1 & \cdot & \cdot & \cdot & \cdot & \cdot & \cdot & 1 & \cdot & \cdot & \cdot & \cdot & \cdot \\
 \cdot & \cdot & 1 & \cdot & \cdot & \cdot & \cdot & \cdot & \cdot & 1 & \cdot & \cdot & \cdot & \cdot \\
 \cdot & \cdot & \cdot & 1 & \cdot & 1 & \cdot & \cdot & \cdot & \cdot & 1 & \cdot & 1 & \cdot \\
 \cdot & \cdot & \cdot & \cdot & 1 & \cdot & 1 & \cdot & \cdot & \cdot & \cdot & 1 & \cdot & 1 \\
 \end{pmatrix} $};
\draw[] (0.1,-0.75) node{ \color{darkgray} $j \in \{$ \small 1,~ 2, $\cdots$  \hspace{4.1cm} $\cdots$ 13,~ 14 $\}$};
\draw[] (5.4,-1.25) node{ \color{darkgray} \footnotesize $\alpha^{1}_{1}$};
\draw[] (5.4,-1.75) node{ \color{darkgray} \footnotesize $\alpha^{1}_{2}$};
\draw[dashed, darkgray] (4.75,-2.04)--(6,-2.04);
\draw[dashed, darkgray] (4.75,-3.01)--(6,-3.01);
\draw[] (5.4,-2.25) node{ \color{darkgray} \footnotesize $\alpha^{2}_{1}$};
\draw[] (5.4,-2.75) node{ \color{darkgray} \footnotesize $\alpha^{2}_{2}$};
\draw[] (5.4,-3.25) node{ \color{darkgray} \footnotesize $\alpha^{3}_{1}$};
\draw[] (5.4,-3.75) node{ \color{darkgray} \footnotesize $\alpha^{3}_{2}$};
\draw[] (5.4,-4.25) node{ \color{darkgray} \footnotesize $\alpha^{3}_{3}$};
\draw[] (5.4,-4.75) node{ \color{darkgray} \footnotesize $\alpha^{3}_{4}$};
\draw[] (5.4,-5.2) node{ \color{darkgray} \footnotesize $\alpha^{3}_{5}$};
\draw [gray, decorate,-,decoration={brace,amplitude=5pt,mirror,raise=4pt},yshift=0pt]
(6,-2.03) -- (6,-1) node [darkgray,midway,xshift=0.8cm] {\footnotesize
 $A^{1}$ };
 \draw [gray, decorate,decoration={brace,amplitude=5pt,mirror,raise=4pt},yshift=0pt]
(6,-3) -- (6,-2.05) node [darkgray,midway,xshift=0.8cm] {\footnotesize
 $A^{2}$ };
 \draw [gray, decorate,decoration={brace,amplitude=5pt,mirror,raise=4pt},yshift=0pt]
(6,-5.4) -- (6,-3.02) node [darkgray,midway,xshift=0.8cm] {\footnotesize
 $A^{3}$ };
\end{tikzpicture} 
\end{figure}
 \captionsetup{width=.8\linewidth}
\captionof{figure}{A multipartition matrix with three partitions and the used row and column notation.} \label{Fig:MatrixA}
\end{example}

\begin{definition}\label{def:colweight}
For a matrix $A$ of the form \eqref{eq:PartitionMatrix}, the \textbf{column weight} of the $j$-th column is number of times the column is repeated and is denoted $c_j\in \Z_+$.
\end{definition}

For a \multipart matrix $A$, let $\bar{A}$ denote the $n\times \bar{m}$ matrix obtained by removing all repeated columns.   This is then a \textbf{$k$-way quasi-independence model}. In fact what we call partition models can also be thought of as $k$-way quasi-independence models with repeated columns. We refer the reader to \cite{CS20} for more information on the MLE of 2-way quasi-independence models.

We are able to apply the iterative proportional scaling algorithm as described in the next section to a partition model $\mathcal{M}_{A}$ in order to estimate the maximum likelihood of a data vector in $\mathbb{R}_{+}^{m}$. These are in fact the only integer matrices defining log-linear models for which we can apply the classical IPS algorithm.

\subsection{Iterative Proportional Scaling}\label{ssec:IPS}

The iterative proportional scaling (IPS) algorithm is a method to calculate the maximum likelihood estimator of a normalized data vector  with respect to the model $\mathcal{M}_{A}$. Before we discuss the algorithm in more detail, we first define the maximum likelihood method.

Broadly speaking, maximum likelihood estimation is a way to find a distribution in a statistical model, $\mathcal{M}_{A}$ that best fits some observed data. Let $u$ be the vector of counts of the observed data.
Assuming that the observations are independent and identically distributed (i.i.d.), the {\bf likelihood function} for $u$ with respect to the model is the function $L(p\mid u): \Mcal_A \rightarrow \R$ defined by
\[
L(p\mid u) = \prod_{i=1}^m p_j^{u_j}.
\] 
Evaluating the likelihood function at $p \in \Mcal_A$ yields the probability of observing the data $u$ from the distribution $p$. The {\bf maximum likelihood estimate} (or MLE) for $u$ is then
\[
\argmax_{p \in \mathcal{M}_{A}} L(p \mid u).
\]
That is, the MLE for $u$ is the distribution in the model that maximizes the probability of observing $u$.
We note that solving this optimization problem is equivalent to maximizing the {\bf log-likelihood function}, $\log(L(p\mid u))$, as the logarithm is concave. We use $d$ throughout to refer to \emph{normalized} data, and denote the MLE for  $d$ by $p^{\star}(d)$, or simply by $p^{\star}$ when $d$ is understood.  One can find an introduction to maximum likelihood estimation in most statistics resources, such as \cite{Harris1998, Klenke2007, Russel2011}. The MLE problem is also described from an algebraic perspective in \cite[Chapter~6]{sullivant2018}.

\begin{definition}
If $p^*(d)$ is a rational function in the entries of $d$ whenever it exists, then we say that $\Mcal_A$ has {\bf rational MLE}. If moreover $\Mcal_A$ is a partition model, then we call it a {\bf rational partition model}.
\end{definition}

The maximum likelihood estimate of any log-linear model can be characterized by the following result.
This result is sometimes referred to as Birch's theorem \cite{drton2008, Pachter2005}.

\begin{prop}[Corollary 7.3.9 in \cite{sullivant2018}] \label{prop:Birch}
Let $A$ be a matrix corresponding to a log-linear model and $d$ be the empirical distribution of the data. If the maximum likelihood estimate in the model $\mathcal{M}_{A}$ exists, then it is the unique solution to the equations
$A d = A p$ for $p \in \mathcal{M}_{A}.$
\end{prop}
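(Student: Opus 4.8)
The plan is to prove this classical result (Birch's theorem) by combining the concavity of the log-likelihood in the canonical, exponential-family coordinates of $\Mcal_A$ with the orthogonal decomposition $\R^m = \rowspan(A) \oplus \ker(A)$. Writing $d$ for the normalized data vector, maximum likelihood estimation amounts to maximizing $\ell(p) = \sum_{j=1}^m d_j \log p_j$ over $p \in \Mcal_A$. In the coordinates $p$ this objective is concave but the feasible set is a non-convex toric variety, so the first move is to reparametrize so that the model constraint becomes linear while concavity is retained.

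First I would pass to canonical parameters. Since $\log p \in \rowspan(A)$ for every $p \in \Mcal_A$, and since the all-ones vector lies in $\rowspan(A)$, each such $p$ can be written as
\[
p_j(\theta) = \frac{\exp\big((A^{\top}\theta)_j\big)}{Z(\theta)}, \qquad Z(\theta) = \sum_{k=1}^m \exp\big((A^{\top}\theta)_k\big),
\]
for some $\theta \in \R^n$, and conversely every such $p(\theta)$ lies in $\Mcal_A$. Substituting into the objective, the log-likelihood in these coordinates becomes $\ell(\theta) = \theta^{\top}(Ad) - \log Z(\theta)$, which is concave because $\log Z$ is a log-sum-exp of linear functions, hence convex. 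A direct computation gives $\partial_{\theta_i}\log Z(\theta) = \sum_j a_{ij}\,p_j(\theta) = (A\,p(\theta))_i$, so $\nabla_\theta \ell = Ad - A\,p(\theta)$. Assuming the MLE exists, the parameter $\theta$ realizing it is a stationary point of $\ell$; setting $\nabla_\theta \ell = 0$ yields the moment-matching equations $Ad = Ap$, and concavity promotes this stationary point to a global maximizer. Thus the MLE satisfies $Ad = Ap$ with $p \in \Mcal_A$.

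For uniqueness I would argue directly from the geometry, independently of the parametrization. Suppose $p, q \in \Mcal_A$ both satisfy $Ap = Aq = Ad$. Then $p - q \in \ker(A)$, while $\log p - \log q \in \rowspan(A)$ since both logarithms lie in $\rowspan(A)$. Because $\rowspan(A)$ and $\ker(A)$ are orthogonal complements in $\R^m$, their pairing vanishes:
\[
0 = \langle \log p - \log q,\; p - q \rangle = \sum_{j=1}^m \big(\log p_j - \log q_j\big)\big(p_j - q_j\big).
\]
Each summand is nonnegative, since the strict monotonicity of $\log$ forces $\log p_j - \log q_j$ and $p_j - q_j$ to share a sign, and a summand vanishes only when $p_j = q_j$. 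Hence $p = q$, so the solution in $\Mcal_A$ is unique.

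I expect the one genuinely delicate point to be the interface between these two arguments: justifying that the maximum is attained in the relative interior of the model, where the stationarity condition is meaningful, rather than escaping to the boundary of $\Delta_{m-1}$. The proposition finesses this by hypothesis, reading ``the MLE exists'' as ``the supremum is attained at an interior point,'' which is precisely what licenses the first-order calculation; concavity of $\ell(\theta)$ then certifies that the stationary point is the global maximizer, and the orthogonality argument certifies it is the only feasible solution of $Ad = Ap$. Everything else (the gradient formula, the convexity of $\log Z$, and the sign analysis) is routine.
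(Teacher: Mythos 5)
The paper does not prove this proposition at all: it is quoted verbatim as Corollary 7.3.9 of \cite{sullivant2018}, so there is no internal argument to compare against. Your proposal is a correct, self-contained proof and is essentially the standard textbook argument for Birch's theorem. Both halves check out: the exponential reparametrization $p_j(\theta)=\exp((A^{\top}\theta)_j)/Z(\theta)$ surjects onto $\Mcal_A$ precisely because the paper defines $\Mcal_A$ by the condition $\log p\in\rowspan(A)$ inside the \emph{open} simplex and because the equal-column-sum hypothesis puts the all-ones vector in $\rowspan(A)$; the gradient computation $\nabla_\theta\ell=Ad-Ap(\theta)$ is right; and the uniqueness argument pairing $p-q\in\ker(A)$ against $\log p-\log q\in\rowspan(A)=\ker(A)^{\perp}$ and using monotonicity of $\log$ is exactly the classical one. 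The ``delicate point'' you flag --- that stationarity is only licensed at an interior maximizer --- is in fact already disposed of by the paper's conventions, since $\Mcal_A$ is by definition contained in the open simplex, so ``the MLE exists'' automatically means it is attained at a point with all coordinates positive. One cosmetic remark: concavity of $\ell(\theta)$ is not needed for the direction actually asserted (MLE $\Rightarrow$ moment equations); it only becomes relevant if one wants the converse, that a solution of $Ad=Ap$ with $p\in\Mcal_A$ is the MLE, which your uniqueness argument delivers anyway.
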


The iterative proportional scaling, or iterative proportional fitting algorithm is a method for approximating the MLE in partition models. This is a well-known algorithm, first defined in the statistics literature by Deming and Stephan in 1940 in \cite{Deming1940} and analyzed further by for example Csiszár in \cite{csiszar1975} and Brown in \cite{Brown1993}. 
There are various types of iterative scaling alorithms. In \cite{drton2008} and \cite{sullivant2018} the authors describe a variant of the original algorithm, proposed as generalized iterative scaling by Darroch and Ratcliff  in \cite{darroch1972}.
We will focus on the earlier version defined below, because in this case each step is guaranteed to produce a rational function. 

In order to guarantee that the initial distribution lies in the partition model, we will fix the uniform distribution $p^{0} = (\frac{1}{n} , \dots , \frac{1}{n})$ as the starting point. The $\ell$th-step of the algorithm is then defined as 
\begin{equation}
p^{\ell} =  p^{\ell-1} \ast \frac{A^{i}d}{A^{i}p^{\ell-1}} \label{IPS}
\end{equation}
for $i = \ell$ mod $k$ where division and multiplication $\ast$ are defined componentwise. 
If $p^{r}$ is the MLE, then by Proposition \ref{prop:Birch}, $Ad = Ap^{r}$ and every factor $\frac{A^{i}d}{A^{i}p^{\ell-1}} = 1$ for $\ell > r$.
Thus if $p^r$ is the true MLE, then $p^{\ell} = p^r$ for all $\ell \geq r$.

Moreover, we note that every step is an information projection to the linear family $L^{i}$ defined by the $i$th partition $A^{i}$ as defined below. Indeed, the {\bf information projection} of $p$ to a non-empty, closed, convex set $M \subset \Delta_{m-1}$ is defined as the element $p' \in M$ that minimizes the KL-divergence between $M$ and $p$, that is,
\begin{equation*}
p' = \argmin\limits_{q \in M} D(q \parallel p) = \sum\limits_{j = 1}^{m} q_{j} \, log \, \frac{q_j}{p_j}.
\end{equation*}
The information projection of $p$ to the linear family  
\begin{equation}
 L^{i} = \{ p \in \Delta_{m-1} \vert \, A^{i}p = A^{i}d  \}\label{eq:InfProj}
\end{equation} 
is well-known and given by
$p' = p \ast \frac{A^{i}d}{A^{i}p}.$
This can be found, for instance, in \cite[Lemma~4.1]{csiszar2004}. Hence the algorithm defined in  (\ref{IPS}) performs an information projection to the linear family associated to a partition matrix $A^i$ at each step. This is sketched in Figure \ref{Fig:SketchIPS}.  
\begin{figure}[H]
\centering
\scalebox{0.9}{
\begin{tikzpicture}[rounded corners, >=stealth']
\draw[line width = 0.3mm] (0,0) -- (8,1.5);

\draw[] (-0.5,0) node{ $L^{1}$};
\draw[] (-0.5,2.5) node{ $L^{2}$};
\draw[] (-0.5,4) node{ $L^{3}$};

\draw[->] (2.2,3) to[out=-70,in=+90] (2.45,0.55);
\draw[fill = white, white] (2.46,1.925) circle (2pt);
\draw[->] (3.65,2.225) to[out=-70,in=+90] (3.95,0.825);
\draw[fill = white, white] (3.85,1.6) circle (2pt);
\draw[->] (4.7,1.7)  to[out=-50,in=+90] (4.95,1.025);
\draw[fill = white, white] (4.9,1.35) circle (2pt);
\draw[->] (5.4,1.35) to[out=-70,in=+100] (5.5,1.075);
\draw[fill = white, white] (5.475,1.2) circle (1.25pt);

\draw[line width = 0.3mm] (0,4) -- (7.75,0.2);
\draw[line width = 0.3mm] (0,2.5) -- (7.5,0.725);

\draw[fill = black] (1,0.2) circle (2pt);
\draw[darkgray] (1,-0.25) node{$p^{\ell}$};
\draw[fill = black] (1.4,2.15) circle (2pt);
\draw[darkgray] (1.75,1.75) node{$p^{\ell+1}$};
\draw[fill = black] (2.2,2.95) circle (2pt);
\draw[darkgray] (2.35,3.35) node{$p^{\ell+2}$};
\draw[fill = black] (2.475,0.45) circle (2pt);
\draw[darkgray] (2.75,0) node{$p^{\ell+3}$};
\draw[fill = black] (3,1.8) circle (2pt);
\draw[darkgray] (3.25,1.4) node{$p^{\ell+4}$};
\draw[fill = black] (3.65,2.225) circle (2pt);
\draw[darkgray] (3.75,2.65) node{\small $p^{\ell+5}$};
\draw[fill = black] (4,0.75) circle (2pt);
\draw[darkgray] (4.1,0.35) node{\small $p^{\ell+6}$};
\draw[fill = black] (4.35,1.475) circle (2pt);
\draw[darkgray] (4.55,1.15) node{\footnotesize $p^{\ell+7}$};
\draw[fill = black] (4.7,1.7) circle (2pt);
\draw[darkgray] (4.8,2.1) node{\footnotesize $p^{\ell+8}$};
\draw[fill = black] (5,0.925) circle (2pt);
\draw[darkgray] (5.05,0.55) node{ \footnotesize $p^{\ell+9}$};
\draw[fill = black, black] (5.15,1.25) circle (2pt);
\draw[fill = black] (5.4,1.35) circle (1.5pt);
\draw[darkgray] (5.6,1.67) node{ \tiny $p^{\ell+11}$};
\draw[fill = black,] (5.5,1) circle (1.5pt);
\draw[darkgray] (5.85,0.725) node{ \tiny $p^{\ell+12}$};
\draw[->] (1,0.2) to[out=90,in=-110] (1.35,2);
\draw[->] (2.475,0.45) to[out=80,in=-120] (2.95,1.7);
\draw[->] (4,0.75) to[out=80,in=-125] (4.27,1.375);
\draw[->] (5,0.925) to[out=90,in=-120] (5.1,1.15);

\draw[->] (1.4,2.15) to[out=70,in=-150] (2.1,2.9);
\draw[->] (3,1.8) to[out=55,in=-160] (3.55,2.22);
\draw[->] (4.35,1.52) to[out=50,in=-160] (4.6,1.675);
\draw[->, line width = 0.15pt] (5.15,1.25) to[out=50,in=-160] (5.31,1.35);

\draw[fill = black] (5.9,1.1) circle (1.5pt);
\draw[] (6,1.4) node{$p^{\star}$};
\end{tikzpicture}}
 \captionsetup{width=.8\linewidth}
\caption{Sketch of the iterative proportional scaling algorithm in the case of three linear families.} \label{Fig:SketchIPS}
\end{figure}

The IPS algorithm converges to the unique point in the intersection between $\mathcal{L}_{A} = \bigcap_{i} L_{i}$ and $\overline{\mathcal{M}}_A$, as shown in Theorem 2.8 in \cite{ay2017}. Hence the result of the IPS algorithm minimizes the KL-Divergence with respect to the first argument
\begin{equation*}
p^{\star} = \arginf\limits_{p \in \mathcal{L}_{A}}  D(p \parallel q)  \text{ for all } q  \in \overline{\mathcal{M}}_A
\end{equation*}
as well as with respect to the second argument
\begin{equation*}
p^{\star} = \arginf\limits_{q \in \overline{\mathcal{M}}_{A}} D(p \parallel q)\text{ for all } p \in \mathcal{L}_{A}.
\end{equation*}
The last minimization is equivalent to maximizing the log-likelihood in case of a discrete model as discussed, for example, in \cite{drton2008}.

Note that since the uniform distribution is in $\overline{\mathcal{M}}_A$, minimizing with respect to the first argument results in the maximum entropy estimation. Hence it is related to the maximum entropy method proposed by Jaynes \cite{Jaynes}.

\begin{example}[The Independence model] \label{Ex:2x2Indep}
A \multipart matrix for the { 2x2} independence model is given below on the right where each dot represents a $0$. Let $d = (d_1, d_2, d_3, d_4)^T$ denote a vector of normalized data. Performing the first step of the algorithm results in:

\begin{minipage}[t]{0.65\textwidth}
{\small
\begin{align*}
p^{0}_{1} &= p^{0}_{2} = \frac{1}{4} \cdot \dfrac{d_{1} + d_{2}}{\frac{2}{4}} =  \frac{1}{2} (d_{1} + d_{2}), \\ p^{0}_{3} &= p^{0}_{4} = \frac{1}{4} \cdot \dfrac{d_{3} + d_{4}}{ \frac{2}{4}} =  \frac{1}{2} (d_{3} + d_{4})
\end{align*} }%
\end{minipage} \hfill
\begin{minipage}[t]{0.4\textwidth}
\vspace{-0.25cm}
\begin{align*}
A = \begin{pmatrix}
1 & 1  & \cdot & \cdot   \\
\cdot & \cdot & 1& 1 \\
\hdashline
1 & \cdot & 1 & \cdot \\
\cdot & 1 & \cdot & 1
\end{pmatrix}
\end{align*} 
\end{minipage}

In this case the algorithm converges to the MLE with the second step: 
{\small
\begin{align*}
p^{1}_{1} &= \frac{1}{2} (d_{1} + d_{2})  \cdot \dfrac{d_{1} + d_{3}}{ \frac{1}{2} (d_{1} + d_{2}) +  \frac{1}{2} (d_{3} + d_{4}  )} = (d_{1} + d_{2})(d_{1} +d_{3})\\
p^{1}_{2} &=\frac{1}{2} (d_{1} + d_{2})  \cdot \dfrac{d_{2} + d_{4}}{ \frac{1}{2} (d_{1} + d_{2}) +  \frac{1}{2} (d_{3} + d_{4})  } = (d_{1} + d_{2})(d_{2} +d_{4})\\
p^{1}_{3} &=\frac{1}{2} (d_{3} + d_{4}) \cdot \dfrac{d_{1} + d_{3}}{\frac{1}{2} (d_{1} + d_{2}) +  \frac{1}{2} (d_{3} + d_{4})} = (d_{3} + d_{4})(d_{1} + d_{3})\\
p^{1}_{4} &=\frac{1}{2} (d_{3} + d_{4}) \cdot \dfrac{d_{2} + d_{4}}{\frac{1}{2} (d_{1} + d_{2}) +  \frac{1}{2} (d_{3} + d_{4} )} = (d_{3} + d_{4})(d_{2} + d_{4})\\
\end{align*} }%
\end{example}
Note that at each step of the algorithm the approximation of the MLE is a rational function of the data $d$. Thus if after finitely many steps the algorithm results in the MLE of $d$ with respect to $\Mcal_A$, the model has a rational maximum likelihood estimator. The question naturally arises: for a partition model with rational MLE, does iterative proportional scaling always result in the MLE after finitely many steps?

\begin{example}\label{ex:DiffRepDiffIPS}
Here we consider the two matrices $A$ and $\tilde{A}$ depicted on the right below.

\noindent
\begin{minipage}[t]{0.7\textwidth}
Both matrices have rowspan equal to $\R^3$, so by Proposition \ref{Prop:SameRowspan}, $\mathcal{M}_{A} = \mathcal{M}_{\tilde{A}}$. Although they represent the same model, the convergence of the IPS algorithm is heavily influenced by the chosen representation. Let $d = (d_1, d_2, d_3)^T$ be normalized data. Using matrix $\tilde{A}$ the IPS algorithm converges in one step to the MLE, $p^{\star} = d$.
\linebreak

However, the  IPS algorithm on $A$ with normalized data vector $d$ results in
\end{minipage} \hfill
\begin{minipage}[t]{0.25\textwidth}
\vspace{-0.75cm}
\begin{align*}
A &= \begin{pmatrix}
1 & 1  & \cdot    \\
\cdot & \cdot & 1\\
\hdashline
1 & \cdot & \cdot \\
\cdot & 1 & 1
\end{pmatrix} \\
\tilde{A} &= \begin{pmatrix}
1 & \cdot  & \cdot    \\
\cdot & 1 & \cdot \\
\cdot & \cdot & 1\\
\end{pmatrix}
\end{align*}
\end{minipage}

{ \small
\begin{align*}
p^{0} &= \left(  \frac{1}{2} (d_{1} + d_{2}), \, \frac{1}{2} (d_{1} + d_{2}) , d_{3}   \right)
\\
p^{1} &= \left(  d_{1} ,\,  \frac{1}{2} (d_{1} + d_{2}) \dfrac{(d_{2} + d_{3})}{ \frac{1}{2} (d_{1} + d_{2}) + d_{3}}, d_{3} \dfrac{(d_{2} + d_{3})}{\frac{1}{2} (d_{1} + d_{2}) + d_{3}}  \right)
\\
p^{2} &=  \left(  d_{1} \dfrac{(d_{1} + d_{2})}{ d_{1} +  \dfrac{\frac{1}{2} (d_{1} + d_{2})(d_{2} + d_{3})}{ \frac{1}{2} (d_{1} + d_{2}) + d_{3}} } , \, \frac{1}{2} (d_{1} + d_{2}) \dfrac{(d_{2} + d_{3})}{ \frac{1}{2} (d_{1} + d_{2}) + d_{3}} \dfrac{(d_{1} +d_{2})}{ d_{1} + \dfrac{\frac{1}{2} (d_{1} + d_{2}) (d_{2} + d_{3})}{ \frac{1}{2} (d_{1} + d_{2}) + d_{3}} } , d_{3}  \right)
\end{align*} }%
In general the different projections have the following form:
\begin{alignat}{4} p^{k} =& \Biggl( d_{1}, \,  && p^{k-1}_{2} \dfrac{(d_{2} + d_{3})}{p^{k-1}_{2} + d_{3}}, \, &&d_{3} \dfrac{(d_{2} + d_{3})}{p^{k-1}_{2} + d_{3}} &&\Biggr), \quad k \, \text{odd} \label{IPSkodd} \\
p^{k} =& \Biggl( d_{1} \dfrac{(d_{1} + d_{2})}{d_{1} + p^{k-1}_{2}} , \,  && p^{k-1}_{2} \dfrac{(d_{1} + d_{2})}{d_{1} + p^{k-1}_{2} }, &&d_{3}   &&\Biggr), \quad k \, \text{even} \label{IPSkeven}
\end{alignat}
Suppose that there exists an index $k$ such that the second entry of $p^{k}$ is exactly $d_{2}$ in \eqref{IPSkodd} as well as \eqref{IPSkeven}. Then we can deduce that $d_{2} = p_{2}^{k-1}.$ 
Hence IPS can only result in the exact result if $d_{1} = d_{2}$.  Note that while the model is symmetric, the application of the IPS is not. Hence if we would reorder the partitions, then exact convergence would require $d_2 = d_3$.

In a practical evaluation with $20 \, 000$ random input distributions, the arithmetic mean of the iteration steps taken to get a step size smaller than $10^{-8}$ was approx $113,47$ with a minimum value of 8 and a maximum value of $287 \, 478$. Recall that in case of $\tilde{A}$ the necessary number of iteration steps is exactly 1. 
\end{example}

\subsection{Toric Fiber Products}\label{ssec:TFP}

In this section, we review an algebraic construction, known as the \emph{toric fiber product}, which will facilitate the proofs in the following sections. 
Toric fiber products were first introduced by Sullivant, and this exposition follows the notation of \cite{tfp07}. 
For the purposes of this article, we may broadly think of the toric fiber product as describing a way
to concatonate the parametrizations of two log-linear models that preserves some of their properties. 

For any positive integer $r$, let $[r] := \{ 1,\dots,r\}$. Fix positive integers $r$ and $s_i, t_i$ for each $i \in [r]$. 
Define three polynomial rings,
\begin{align*}
\C[x] & =  \C[x^i_j \mid i \in [r], j \in [s_i]], \\
\C[y] & =  \C[y^i_k \mid i \in [r], k \in [t_i]], \text{ and } \\
\C[z] & =  \C[z^i_{jk} \mid i \in [r], j \in [s_i], k \in [t_j]].
\end{align*}

Recall that a {\bf multigrading} on a polynomial ring $\C[t] := \C[t_1,\dots t_d]$ is an assignment of a multidegree vector to each monomial, $\deg(t^u) \in \Z^s$ that satisfies $\deg(t^{u+v}) = \deg(t^u) \deg(t^v)$. An ideal $I \in \C[t]$ is homogeneous with respect to this multigrading if each of its terms has the same multidegree.

Fix multigradings on $\C[x]$ and $\C[y]$ given by
$\deg(x^i_j) = \deg(y^i_j) = \bfd^i$
for integer vectors $\bfd^i$. Note that the multigrading of each indeterminate depends only on its superscript, $i$. 
Let $D$ be the matrix with columns $\bfd^i$ for $i \in [r]$.

Let $I \subset \C[x]$ and $J \subset \C[y]$ be homogeneous ideals with respect to this multigrading.
We define a ring homomorphism,
\begin{alignat*}{4}
\psi_{I,J}: & \quad & \C[z] & \rightarrow & (\C[x] / I) \otimes_{\C} (\C[y] / J) \\
& & z^i_{jk} & \mapsto & x^i_j \otimes_{\C} y^i_k
\end{alignat*}

\begin{definition}
The \textbf{toric fiber product} of $I$ and $J$ with multigrading $D$ is the ideal 
$I \times_D J = \mathrm{ker}(\psi_{I,J})$ in $\C[z]$.
\end{definition}

We assume throughout that $I$ and $J$ are toric, as the ideals associated to rational partition models are always toric.
In order to parametrize the toric fiber product of $I$ and $J$, we may simply take the product of their parametrizations.
More precisely, if $\phi_I$ and $\phi_J$ are monomial parametrizations of $I$ and $J$, then we obtain a parametrization for their toric fiber product according to multigrading $D$ via $z^i_{jk} \mapsto \phi_I(x^i_j)\phi_J(y^i_k).$

Now assume further that $D$ is linearly independent. In this case, we can explicitly describe a generating set for $I \times_D J$ using the following families of polynomials. First, let $i \in [r]$. Then we define
\[
\mathrm{Quad}^i := \{ z^i_{jk}z^i_{j'k'} - z^i_{jk'}z^i_{j'k} \mid j,j' \in [s_i], k,k' \in [t_i] \}.
\]
By replacing each indeterminate with its parametrization in terms of $\phi_I$ and $\phi_J$, we may see that each element of $\mathrm{Quad}^i$ belongs to $I \times_D J$. We define
$\mathrm{Quad} := \cup_{i=1}^r \mathrm{Quad}^i$.

Let $f = \prod_{\alpha=1}^d x^{i_{\alpha}}_{j_{\alpha}} - \prod_{\alpha=1}^d x^{i_{\alpha}}_{j'_{\alpha}} \in I$. Note that all binomials in $I$ can be written in this form as $D$ is linearly independent and $I$ is homogeneous with respect to the multigrading specified by $D$. We define the set of binomials,
\[
\mathrm{Lift}(f) = \left\lbrace  \prod_{\alpha=1}^d z^{i_{\alpha}}_{j_{\alpha}k_{\alpha}} - \prod_{\alpha=1}^d z^{i_{\alpha}}_{j'_{\alpha} k_{\alpha}} 
\mid k_{\alpha} \in [t_{i_{\alpha}}] \text{ for all } \alpha \in [d]\right\rbrace.
\]
Observe that each binomial in $\mathrm{Lift}(f)$ also lies in $I \times_D J$. For each binomial $g \in J$, we define $\mathrm{Lift}(g)$ analogously. Let $F$ be a generating set for $I$. Then we define the family of polynomials,
$\mathrm{Lift}(F) := \cup_{f \in F} \mathrm{Lift}(f),$
and similarly for any generating set of $J$. We are now able to describe the generating set, and in fact, a Gr\"obner basis, for $I \times_D J$. The following is an adaptation of Theorem 2.9 of \cite{tfp07} for our purposes.

\begin{theorem}[\cite{tfp07}] \label{thm:TFPgenerators}
Let $I$ and $J$ be toric ideals that are homogeneous with respect to the multigrading specified by $D$. Suppose further that the columns of $D$ are linearly independent. Let $F$ be a Gr\"obner basis for $I$  and let $G$ be a Gr\"obner basis for $J$. Then
\[
\mathrm{Quad} \cup \mathrm{Lift}(F) \cup \mathrm{Lift}(G)
\]
is a Gr\"obner basis for $I \times_D J$ with respect to a certain weight order.
\end{theorem}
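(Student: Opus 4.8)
The plan is to prove both that every binomial in $\mathrm{Quad} \cup \mathrm{Lift}(F) \cup \mathrm{Lift}(G)$ lies in $I \times_D J$ and that these binomials form a Gr\"obner basis for a suitable weight order. The containment direction is the routine one: since $I \times_D J = \ker(\psi_{I,J})$ and $\psi_{I,J}$ sends $z^i_{jk}$ to the monomial $\phi_I(x^i_j)\phi_J(y^i_k)$, I would substitute this parametrization into each proposed generator and check that it vanishes. For an element of $\mathrm{Quad}^i$ this is immediate, since both terms map to $\phi_I(x^i_j)\phi_I(x^i_{j'})\phi_J(y^i_k)\phi_J(y^i_{k'})$. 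For a lift of a binomial $f \in I$, the two monomials differ only in how the $k$-indices are distributed, and because $f$ is homogeneous with respect to $D$ and $D$ has linearly independent columns, the $\phi_J$ factors on the two sides agree; the image of the lift is therefore $\big(\prod \phi_I(x^{i_\alpha}_{j_\alpha}) - \prod \phi_I(x^{i_\alpha}_{j'_\alpha})\big)\cdot(\text{common } \phi_J \text{ factor})$, which vanishes since $f \in I$. The argument for $\mathrm{Lift}(G)$ is symmetric.

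For the Gr\"obner basis direction, I would first fix the weight order. Let $w_I$ and $w_J$ be weight vectors on $\C[x]$ and $\C[y]$ that select the given Gr\"obner bases $F$ and $G$ as initial terms, and transport them to a weight on $z^i_{jk}$ by adding the weight of $x^i_j$ to that of $y^i_k$, together with a small sorting perturbation that makes the initial term of each quadric in $\mathrm{Quad}^i$ equal to the off-diagonal monomial $z^i_{jk'}z^i_{j'k}$ (for $j<j'$, $k<k'$). Under this order, $\mathrm{Quad}^i$ is exactly the set of $2\times 2$ minors of the matrix $(z^i_{jk})_{j,k}$, whose leading terms are the off-diagonal products; these are known to form a Gr\"obner basis generating the determinantal initial ideal. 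The key structural fact I would then establish is a normal form statement: using only the quadrics, every monomial in $\C[z]$ reduces to a unique sorted monomial, and sorted monomials are in bijection with pairs consisting of a monomial in $\C[x]$ and a monomial in $\C[y]$ of equal $D$-multidegree. Linear independence of the columns of $D$ is essential here, as it guarantees that this multidegree matching determines the $k$-indices uniquely.

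With the normal form in hand, I would argue that any binomial in $I \times_D J$ whose leading term is not already divisible by a leading term of $\mathrm{Quad}$ is supported on sorted monomials, and that on such monomials the fiber product relations are precisely the lifts of relations in $I$ and $J$. Reducing such a binomial by $\mathrm{Lift}(F)$ and $\mathrm{Lift}(G)$ then mirrors reducing its $x$- and $y$-shadows by $F$ and $G$, which terminates at zero because $F$ and $G$ are Gr\"obner bases. To make this precise I would invoke Buchberger's criterion and verify that all S-polynomials reduce to zero: S-pairs within a single $\mathrm{Quad}^i$ reduce because the minors form a Gr\"obner basis; S-pairs among $\mathrm{Lift}(F)$ (respectively $\mathrm{Lift}(G)$) reduce because $F$ (respectively $G$) does; and S-pairs between generators supported on disjoint superscripts or disjoint variable blocks reduce by coprimality of leading terms.

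The main obstacle will be the cross S-pairs between $\mathrm{Quad}$ and $\mathrm{Lift}(F)$ (and $\mathrm{Lift}(G)$): here an overlap in a shared $z$-variable can, a priori, produce an S-polynomial that is neither sorted nor an evident lift. Resolving these is the technical heart of the argument, and it is exactly where the interplay between sorting and lifting must be controlled. I would handle it by showing that after one sorting step the S-polynomial becomes the lift of an S-polynomial (or of a reduction step) from the Gr\"obner basis computation for $I$ or $J$, so that it reduces to zero by induction. Since this theorem is an adaptation of Theorem 2.9 of \cite{tfp07}, I would ultimately follow that argument, checking that the linear independence hypothesis on $D$—which replaces the more general compatibility conditions there—makes each of these reductions go through cleanly.
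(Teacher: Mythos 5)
The paper does not actually prove this statement: it is imported, with attribution, as an adaptation of Theorem 2.9 of \cite{tfp07}, so there is no internal proof to compare against. Your sketch is a correct reconstruction of the argument in that source: containment by substituting the parametrization, a lifted weight order with a sorting perturbation so that the off-diagonal terms of $\mathrm{Quad}$ lead, the normal-form bijection between sorted monomials and pairs of $x$- and $y$-monomials of equal $D$-multidegree (which is where linear independence of the columns of $D$ enters), and reduction of the remaining relations to lifts of relations in $I$ and $J$. The one structural difference is that the cited proof exploits the fact that for a toric ideal a set of binomials in the ideal is a Gr\"obner basis as soon as the leading term of every binomial in the ideal is divisible by one of their leading terms; arguing case-by-case on whether the leading monomial is sorted sidesteps the cross S-pair analysis between $\mathrm{Quad}$ and the lifts that you correctly identify as the technical heart of a Buchberger-style route. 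A minor imprecision: in the containment step the $\phi_J$-factors of the two terms of a lift agree simply because the lift uses the same indices $k_{\alpha}$ on both monomials; homogeneity with respect to $D$ and linear independence of its columns are what guarantee that every binomial of $I$ can be written with matching superscripts $i_{\alpha}$ on both sides in the first place.
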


\begin{example}
Consider the matrices
\[
B = \begin{blockarray}{[cc|cc]}
1 & 1 & 0 & 0 \\
0 & 0 & 1 & 0 \\
0 & 0 & 0 & 1
\end{blockarray} \qquad \text{ and } \qquad C = \begin{blockarray}{[cc|cc]}
1 & 1 & 0 & 0 \\
0 & 0 & 1 & 1 \\
1 & 0 & 1 & 0 \\
0 & 1 & 0 & 1
\end{blockarray}
\]
The columns of $B$ correspond in order to variables $x^1_1, x^1_2, x^2_1, x^2_2$ and the columns of $C$ correspond in order to variables $y^1_1, y^1_2, y^2_1, y^2_2$. Let $I$ be the vanishing ideal of the log-linear model specified by $B$ and let $J$ be that of $C$. We have
$I = \langle x^1_1 - x^1_2 \rangle$ and $\qquad J = \langle y^1_1y^2_2 - y^1_2 y^2_1\rangle.$
Consider the multigrading with respect to the $2 \times 2$ identity matrix $D$, where $\deg(x^1_j) = \deg(y^1_j) = e_1$ and $\deg(x^2_j) = \deg(y^2_j) = e_2$ for $j = 1,2$. Observe that both $I$ and $J$ are homogeneous with respect to this multigrading. The matrix defining the parametrization of their toric fiber product, $I \times_D J$, is
\[
\begin{blockarray}{cccc|cccc}
z^1_{11} & z^1_{12} & z^1_{21} & z^1_{22} & z^2_{11} & z^1_{12} & z^1_{21} & z^1_{22} \\
\begin{block}{[cccc|cccc]}
 1 & 1 & 1 & 1 & 0 & 0 & 0 & 0 \\
0 & 0 & 0 & 0 & 1 & 1 & 0 & 0 \\
 0 & 0 & 0 & 0 & 0 & 0 & 1 & 1 \\
\BAhline
 1 & 1 & 1 & 1 & 0 & 0 & 0 & 0 \\
 0 & 0 & 0 & 0 & 1 & 1 & 1 & 1 \\
 1 & 0 & 1 & 0 & 1 & 0 & 1 & 0 \\
 0 & 1 & 0 & 1 & 0 & 1 & 0 & 1 \\
\end{block}
\end{blockarray}
\]

In order to construct a generating set for $I \times_D J$, we consider the lifts of the binomials that generate $I$ and $J$. We have
\begin{align*}
\mathrm{Lift}(x^1_1 - x^1_2) &=&& \{z^1_{11} - z^1_{21}, z^1_{12} - z^1_{22}\} \\
\mathrm{Lift}(y^1_1y^2_2 - y^1_2 y^2_1) &=&& \{ z^1_{11}z^2_{12} - z^1_{12} z^2_{11}, z^1_{11}z^2_{22} - z^1_{12} z^2_{21}, z^1_{21}z^2_{12} - z^1_{22} z^2_{11}, z^1_{21}z^2_{22} - z^1_{22} z^2_{21} \}
\end{align*}
For each $i = 1,2$, we have
$\mathrm{Quad}^i = \{z^i_{11}z^i_{22} - z^i_{12} z^i_{21} \}.$
By Theorem \ref{thm:TFPgenerators}, we have that
\[
\mathrm{Lift}(x^1_1 - x^1_2) \cup \mathrm{Lift}(y^1_1y^2_2 - y^1_2 y^2_1)  \cup \mathrm{Quad}^1 \cup \mathrm{Quad}^2
\]
is a generating set for $I \times_D J$.
\end{example}

We conclude this introduction to the toric fiber product by describing the MLE of the model associated to the toric fiber product in terms of the MLE of its factors. Let $B, C$ be matrices representing the log-linear models associated to $I$ and $J$. Let $\hat{p}(B), \hat{p}(C)$ and $\hat{p}(D)$ denote the maximum likelihood estimators for the log-linear models given by $B, C$ and $D$, respectively. The following is Theorem 5.5 of \cite{AKK20} and plays a critical role in our proofs of the results in Section 3.

\begin{theorem}[\cite{AKK20}]\label{thm:AKK20}
If $D$ has linearly independent columns, then the $(i,j,k)$th coordinate function of the maximum likelihood estimator for the log-linear model associated to the toric fiber product $I \times_D J$ is
\[
\hat{p}^i_{jk} = \frac{\hat{p}^i_j(B) \hat{p}^i_k(C)}{\hat{p}^i(D)}.
\]
\end{theorem}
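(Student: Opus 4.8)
The plan is to characterize the MLE via Birch's theorem (Proposition~\ref{prop:Birch}): the maximum likelihood estimator $\hat{p}$ for the toric fiber product is the unique point of $\Mcal_{I\times_D J}$ whose sufficient statistics agree with those of the data $d$, that is, $A\hat{p}=Ad$, where $A$ is the matrix parametrizing $I\times_D J$. I would therefore take the claimed formula as a candidate $\tilde{p}^i_{jk}:=\hat{p}^i_j(B)\,\hat{p}^i_k(C)/\hat{p}^i(D)$, verify that (i) $\tilde{p}$ lies in the model and (ii) $A\tilde{p}=Ad$, and then conclude $\tilde{p}=\hat{p}$ by uniqueness. Here $\hat{p}(B)$, $\hat{p}(C)$ and $\hat{p}(D)$ are the MLEs of $B$, $C$ and $D$ with respect to the marginals of $d$ induced by the index structure: the $(i,j)$-marginal $\sum_k d^i_{jk}$, the $(i,k)$-marginal $\sum_j d^i_{jk}$, and the block totals $\sum_{j,k} d^i_{jk}$, respectively.

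The heart of the argument, and the step I expect to be most delicate, is a marginal-consistency identity: for every $i$,
\[
\sum_{j} \hat{p}^i_j(B) \;=\; \hat{p}^i(D) \;=\; \sum_{k} \hat{p}^i_k(C).
\]
To prove this I would use that $B$ and $C$ are homogeneous with respect to the multigrading $D$, so the $r$ linear functionals recording the $D$-degree lie in $\rowspan(B)$ and in $\rowspan(C)$. Applying Birch's theorem to these particular rows shows that the block-marginal of $\hat{p}(C)$ and the data marginal have the same image under $D$; since the columns of $D$ are linearly independent, $D$ is injective as a linear map and this forces the block-marginal of $\hat{p}(C)$ to equal $\hat{p}(D)$ (indeed, linear independence makes $\Mcal_D$ saturated, so $\hat{p}(D)$ is simply the normalized block totals). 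The identical argument applied to $B$ gives the other equality. The subtlety is bookkeeping: one must track how the shared $D$-grading rows sit inside both factor matrices, which is exactly the structure that the toric fiber product encodes.

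With the marginal identities in hand, both remaining verifications are routine. For (i), I would establish model membership by writing $\log \tilde{p}^i_{jk} = \log \hat{p}^i_j(B) + \log \hat{p}^i_k(C) - \log \hat{p}^i(D)$ and observing that each summand, lifted to a function of $(i,j,k)$, lies in $\rowspan(A)$: the $B$-term and $C$-term are lifts of vectors in $\rowspan(B)$ and $\rowspan(C)$, whose lifts are rows of $A$, while the $D$-term is constant on each block, hence (as the columns of $D$ are independent) a combination of the lifted $D$-grading rows. The marginal identities also give $\sum_{i,j,k}\tilde{p}^i_{jk} = \sum_i \hat{p}^i(D) = 1$, so $\tilde{p}\in\Delta_{m-1}$ and thus $\tilde{p}\in\Mcal_{I\times_D J}$. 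For (ii), each row of $A$ is the lift of a row $b$ of $B$ or a row $c$ of $C$; summing $\tilde{p}$ against a lifted $B$-row collapses the $k$-index using $\sum_k \hat{p}^i_k(C) = \hat{p}^i(D)$ to leave $\sum_{i,j} b^i_j \hat{p}^i_j(B) = (B\hat{p}(B))_b$, which equals the corresponding statistic of $d$ by Birch's theorem for $B$; the $C$-rows are handled symmetrically. Hence $A\tilde{p}=Ad$, and uniqueness in Birch's theorem yields $\tilde{p}=\hat{p}$, completing the proof.
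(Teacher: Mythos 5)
The paper does not actually prove this statement: it is imported verbatim as Theorem 5.5 of \cite{AKK20}, so there is no internal proof to compare against. Your argument is nonetheless correct and is the natural one (and is in the spirit of how the paper itself argues elsewhere, e.g.\ the Birch-verification in Proposition \ref{prop:CompressedMLE} and the rowspan argument in Theorem \ref{thm:balanced2GRIP}). The two Birch conditions are checked correctly; the marginal-consistency identity $\sum_j \hat{p}^i_j(B) = \hat{p}^i(D) = \sum_k \hat{p}^i_k(C)$ is exactly the right pivot, and your justification of it is sound: multihomogeneity of $I(B)$ with respect to the grading means the lifted rows of $D$ annihilate the integer kernel of $B$, hence lie in $(\ker B)^\perp = \rowspan(B)$, so Birch's theorem for $B$ transfers to the block totals, and injectivity of $D$ (linearly independent columns) then pins down the block marginals; the observation that $\Mcal_D$ is the full simplex, so $\hat{p}(D)$ is just the normalized block totals, is also correct. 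Two small points worth making explicit if you write this up: (i) you should state the standing assumption that the relevant MLEs exist and are strictly positive, since the model-membership step uses $\log \tilde{p}$ and the paper's characterization $\Mcal_A = \{p \in \Delta_{m-1} \mid \log p \in \rowspan(A)\}$ only applies to interior points; and (ii) the claim that the block indicators $\mathbf{1}_{\text{block }i}$ lie in the lifted rowspan deserves the one-line justification you gesture at, namely that linear independence of the columns of $D$ makes its row space all of $\R^r$.
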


\subsection{Balanced Staged Tree Models}\label{ssec:StagedTrees}

We introduce staged tree models following the presentation in \cite{AD19}. Staged tree models are a type of probability tree model, which are graphical models that encode conditional independence statements between events. This is in contrast to hierarchical models, which instead encode conditional independence of random variables. For a detailed introduction see \cite{CGS18}.  
Every discrete Bayesian network and decomposable graphical model is a staged tree model \cite{DS21}.
In this section, we define the notion of balanced and stratified staged trees, and we describe the correspondence between balanced, stratified staged tree models and partition models with a rational maximum likelihood estimator.

Let $\mathcal{T} = (V,E)$ be a tree with vertex set $V$ and edge set $E$. A directed tree is \textit{rooted} if there exists a vertex $v_0$, which we called the \textit{root}, such that the unique path from $v_0$ to each leaf in $T$ is a directed path.

For a particular vertex $v\in V$, we can define the set of \textit{children} of $v$ as the set $\ch(v) = \{ u\, |\, (v,u)\in E\}$. Then the outgoing edges from $v$ can be denoted as the set $E(v) = \{(v,u)\, |\, u\in \ch(v)\}$, and $v$ is a leaf if $E(v) = \emptyset$. A tree is \textit{labeled} if there exists a label set $S$ and a surjective mapping, called a labelling, $\theta : E\rightarrow S$. From now on we assume that $(\T, \theta)$ is a rooted labeled tree graph.

\begin{definition}
For any vertex $v\in V$, the set of labels of its outgoing edges is the \textbf{floret} of $v$, denoted $\F_v := \{ \theta(e)\,|\, e\in E(v)\}$. The set of florets of $(\T, \theta)$ is denoted $F_\T$.
\end{definition}

\begin{definition}
The labeled tree graph $(\T, \theta)$ is a \textbf{staged tree} if the following are satisfied:
\begin{itemize}
\item For each $v\in V$, no two edges in $E(v)$ have the same labelling, i.e. $|\theta_v| = |E(v)|$.

\item For any two vertices $v,w\in V$, the florets $\F_v$ and $\F_w$ are equal or disjoint. When $\F_v=\F_w$ we say that the vertices $v$ and $w$ are in the same \textit{stage}.
\end{itemize}
\end{definition}

A \textit{staged tree model} can be associated to a staged tree $(\T,\theta)$ as follows. Let $\Lambda$ denote the set of root-to-leaf paths in $\T$ and suppose $|\Lambda|=m$. For $\lambda\in \Lambda$ let $E(\lambda)$ be the set of edges in $\Lambda$ and define the multiset $\theta(\lambda):=\{\{s_i\, |\, \theta(e)=s_i\,\, \text{for}\,\, e\in E(\lambda) \}\}$. The paths $\lambda$ parameterize the model $\Mcal_\T$ defined below.

\begin{definition}\label{def:StagedTree}
For a staged tree $(\T,\theta)$ with label set $S=\{s_1,\hdots,s_r\}$, the \textbf{staged tree model} $\Mcal_\T$ is the image of the map $\phi_{\T} : \Theta \rightarrow \Delta_{m-1}$ defined by
\begin{equation}\label{eq:TreeToricMap}
\phi_{\T}(s_1,\hdots, s_{r}) = \left(\prod_{s_i \in \theta(\lambda)} s_i \right)_{\lambda\in\Lambda}
\end{equation}
where the parameter space is 
\begin{align*}
\Theta := \left\lbrace (s_1,\hdots,s_r) \in (0,1]^r \, |\, \sum_{s_i\in \mathcal{F}} s_i =1\,\, \text{for all florets}\,\, \mathcal{F}\in F_\T \right\rbrace.
\end{align*}
\end{definition}

\newpage
\begin{example}\label{ex:firststagedtree} Consider the staged tree $(\T,\theta)$ in Figure \ref{Fig:Tree}.

\noindent
\begin{minipage}[t]{0.45\textwidth}
In this diagram, the vertices in the same stage have the same coloring. We can immediately see that each floret has equal or disjoint label set. We can refer to the vertices of the tree by the edge labels defining the path to the vertex, i.e. the blue vertices are $v_{s_0t_0}$ and $v_{s_1t_0}$. If $v_0$ is the root vertex then the florets of the tree are
\begin{align*}
\F_{v_0}, \quad\textcolor{violet}{\F_{v_{s_0}}} &= \textcolor{violet}{\F_{v_{s_1}}}, \\ \textcolor{blue}{\F_{v_{s_0t_0}}} &=  \textcolor{blue}{\F_{v_{s_1t_0}}},\quad \\ \textcolor{red}{\F_{v_{s_0t_1}}} &= \textcolor{red}{\F_{v_{s_1t_1}}}.
\end{align*}
\end{minipage} \hfill
\begin{minipage}[t]{0.5\textwidth}
\begin{center} 
\centering\raisebox{\dimexpr \topskip-\height}{%

\scalebox{0.8}{ 
\begin{tikzpicture}[grow=right, edge from parent/.style={draw,-latex}, rounded corners]
\node[circle, draw, line width = 0.75pt]  {}
child[level distance = 16mm, sibling distance=3cm, line width = 0.75pt] {node[circle,draw, fill= violet!40] {} { 
	child[sibling distance=1.5cm, level distance = 18mm]{node[circle,draw,fill= red!50] {}
		child[sibling distance=0.5cm]{node[circle,draw, red!50] {}
		edge from parent node [below, align=center] {}}
		child[sibling distance=0.5cm]{node[circle,draw, red!50] {}
		edge from parent}
	 edge from parent node [->, below, align=center]{}          
	}
	child[sibling distance=1.5cm , level distance = 18mm]{node[circle,draw, fill=blue!30] {}
		child[sibling distance=0.5cm]{node[circle,draw, blue!50!white] {}}
		child[sibling distance=0.5cm]{node[circle,draw, blue!50!white] {}}
		child[sibling distance=0.5cm]{node[circle,draw, blue!50!white] {}}     
	}                                
}edge from parent node [left, align=center] {\small $s_{1}$}} 
child[level distance = 16mm,sibling distance=3cm, line width = 0.75pt] {node[circle,draw, fill= violet!40] {} 
	child[sibling distance=1.5cm, level distance = 18mm]{node[circle,draw, fill=red!50] {} 
		child[sibling distance=0.5cm]{node[circle,draw, red!50] {}
		edge from parentnode [below, align=center, xshift=5pt] {\small $r_{4}$}}
		child[sibling distance=0.5cm]{node[circle,draw, red!50] {}
		edge from parent node [above, align=center, xshift=5pt] {\small $r_{3}$}}   
		 edge from parent node [->, above, align=center]
                {\small $t_{1}$}               
	}
	child[sibling distance=1.5cm, level distance = 18mm]{node[circle,draw, fill=blue!30]{}
		child[sibling distance=0.5cm,draw]{node[circle,draw, blue!50!white] {}
		edge from parent node [below, align=center, xshift=5pt] {\small $r_{2}$}}
		child[sibling distance=0.5cm]{node[circle,draw, blue!50!white] {}
		edge from parent node [above=-3pt, align=center, xshift=5pt] {\small $r_{1}$}}
		child[sibling distance=0.5cm]{node[circle,draw, blue!50!white] {}
		edge from parent node [above, align=center, xshift=5pt] {\small $r_{0}$}}
		 edge from parent node [->, above, align=center]
                { \small $t_{0}$}     
	}   edge from parent node [left, align=center] {\small $s_{0}$}                        
};
\draw[] (6,2.75) node{\footnotesize $p_{000}$};
\draw[] (6,2.25) node{\footnotesize $p_{001}$};
\draw[] (6,1.75) node{\footnotesize $p_{002}$};
\draw[] (6,1) node{\footnotesize $p_{010}$};
\draw[] (6,0.5) node{\footnotesize $p_{011}$};
\draw[] (6,-0.25) node{\footnotesize $p_{100}$};
\draw[] (6,-0.75) node{\footnotesize $p_{101}$};
\draw[] (6,-1.25) node{\footnotesize $p_{102}$};
\draw[] (6,-2) node{\footnotesize $p_{110}$};
\draw[] (6,-2.5) node{\footnotesize $p_{111}$};
\end{tikzpicture}  }  }
 \captionsetup{width=.8\linewidth} 
\captionof{figure}{Staged tree associated to the model $\Mcal_\T$.} \label{Fig:Tree}
\end{center}
\end{minipage}
\vspace{0.25cm}

The associated staged tree model $\Mcal_\T$ is image of the map $\phi_\T: \Theta\rightarrow \Delta_{9}$ where
\begin{align*}
\phi_\T(s_0,s_1,t_0,t_1,r_0,r_1,r_2) = (s_0t_0r_0, &s_0t_0r_1, s_0t_0r_2, s_0t_1r_3,s_0t_1r_4,\\
 &s_1t_0r_0, s_1t_0r_1, s_1t_0r_2, s_1t_1r_3,s_1t_1r_4 ),
\end{align*}
and
\begin{align*}
\Theta = \{ (s_0,s_1,t_0,t_1,r_0,r_1,r_2) \in (0,1]^7\, |\, s_0+s_1=1, &t_0+t_1=1, \\
&r_0+r_1+r_2=1, r_3+r_3+r_4=1\}.
\end{align*} 

\end{example}

\begin{remark}
We use the slightly more general definitions of staged tree and staged tree models introduced in \cite{AD19} that allow for staged trees with singleton florets. The authors show that the staged tree obtained by contracting these florets, which satisfies the definition used in previous literature (i.e. \cite{CGS18, GS18}), results in the same staged tree model \cite[Lemma 5.11]{AD19}.
\end{remark}

The map $\phi_\T$ in Definition \ref{def:StagedTree} also defines a toric model if one ``forgets'' the conditions imposed on the parameter space by the florets. We denote the map obtained by extending $\phi_\T$ to all of $\R^r$ as $\tphiT: \R^r \rightarrow \R^m$ and the corresponding toric model as $\tMT$. Immediately it follows that $\Mcal_\T \subset \tMT$, but in general it is not true that $\Mcal_\T = \tMT$. In \cite{DG20} the authors found necessary and sufficient graphical conditions on $(\T,\theta)$ such that $\Mcal_\T = \tMT$. We detail those conditions in Section \ref{Sec:StagedTrees}.

\begin{definition}
For any vertex in a staged tree $(\T,\theta)$, we define the \textbf{level} $\ell(v)$ as the number of edges in the unique root-to-$v$ path. A staged tree $(\T,\theta)$ is \textbf{stratified} if all its leaves have the same level and for any two vertices $v,w$ such that $\mathcal{F}_v=\mathcal{F}_w$, we have $\ell(v)=\ell(w)$. We say that the \textbf{level} of a stratified staged tree is the level of its leaves.
\end{definition}

There is a natural partition of the label set of a stratified staged tree by level. For a stratified staged tree of level $\ell$, we can write $S = \{s^0_0,s^0_1,\hdots,s^0_{n_0},s^1_0,\hdots,s^{\ell}_{n_{\ell}}\}$ where $n_k$ is the number of distinct edge labels for edges emanating from vertices with level $k$. Thus the monomial map  $\phi_\T$ for a stratified staged tree of level $\ell$ is multilinear and homogeneous, which implies that the associated matrix is a \multipart matrix with $\ell$ partitions. We denote the matrix associated with $\phi_\T$ as $A_\T$.

Similarly with \multipart matrix $A$, we can associate a directed, rooted tree with a labelling induced by the rows of $A$. We define the label set associated with $A$ as $S_A=\{s^1_1,\hdots,s^1_{n_1},\hdots,s^{\ell}_1,\hdots,s^{\ell}_{n_{\ell}}\}$ associated to the row vectors $\alpha^1_1, \alpha^1_2,\hdots, \alpha^{\ell}_{n_{\ell}}$.

Starting with a root vertex, we add a labeled edge $s^1_i$ for each associated row vector in $A^1$. To each edge $s^1_i$, we attach an edge labeled $s^2_j$ to the outgoing vertex for each $s^2_j$ is the label set $S_A$ such that $I^1_i\cap I^2_j\neq \emptyset$. The resulting directed, rooted, and labeled tree graph is $\T_{A^{1,2}}$ where $A^{1,2}$ is the matrix with partitions $A_1, A_2$. The paths of $\T_{A^{1,2}}$ exactly correspond to the distinct columns of $A^{1,2}$. We note that if $A^{1,2}$ has two columns that are equal to one another, these correspond to the same path in $\T_{A^{1,2}}$. We repeat this process for each $A^k$. In this way we can define $\T_A$ as the directed, rooted tree graph with labelling such that the paths in $\T_A^{(\ell)}$ correspond to the distinct columns of $A^{1,\hdots,\ell}$.

We will see that staged trees are useful in describing certain properties of \multipart matrices. In particular, when we introduce conditions under which \multipart matrices produce the MLE after one IPS cycle, we require that $\T_A$ is a staged tree. We also show in Section \ref{Sec:StagedTrees} that the staged tree models for which $\Mcal_\T = \overline{\Mcal}_\T$ also produce the MLE after one cycle of IPS.

\subsection{Hierarchical Models}\label{ssec:hierarchical}

A hierarchical model is a log-linear models for which the independence structure between random variables can be described by a simplicial complex. We will introduce hierarchical models using the notation in \cite{sullivant2018}. They comprise another interesting class of partition models.

Let $2^{[n]}$ be the powerset of $[n]$. A
\textbf{simplicial complex} $\Gamma$ with the ground set $[n]$ is a subset of $2^{[n]}$ with the property that if
$F \in \Gamma$ and $F' \subseteq F$, then $F' \in \Gamma$. The elements of a simplicial complex are the \textbf{faces} of $\Gamma$, the inclusion-maximal faces of $\Gamma$ are called {\bf facets}, and the set of all facets of $\Gamma$ is denoted $\mathrm{facet}(\Gamma)$. A simplicial complex can be completely identified by its set of facets. So we may, for example, write $\Gamma = [12][13]$ for the simplicial complex $\Gamma = \{\emptyset, \{1\}, \{2\}, \{3\}, \{1,2\}, \{1,3\} \}$.

Up to this point we did not assume any additional structure on $ \Delta_{m-1} $, but now we will consider a collection of random variables $X_{1}, \dots, X_{\ell}$ with discrete state spaces $R_{1}, \dots, R_{\ell}$. A simplicial complex with the ground set $[\ell]$ describes the interactions among the different random variables on the joint state space $R = R_{1} \times \dots \times R_{\ell}$ . The corresponding log-linear model is called hierarchical model and defined as follows.

\begin{definition} \label{def:HierarchicalModel} Let $\Gamma$ be a simplicial complex. For every facet $F = \{f_{1}, f_{2}, \dots \}$ in $\mathrm{facet}(\Gamma)$, let $r_{F} = \prod_{f \in F} \vert R_{f} \vert$. For $i = (i_{1}, \dots, i_{\ell}) \in R$ let $i_{F}= (i_{f_{1}}, i_{f_{2}}, \dots )$. Additionally, we introduce a set of $r_{F}$ positive parameters $\theta^{F}_{i_F}$ for every facet. Now the \textbf{hierarchical model} corresponding to $\Gamma$ is 
\begin{align*}
\mathcal{M}_{\Gamma} = \left\lbrace p \text{ p.d. on } R \, \vert \, p_{i} = \dfrac{1}{Z(\theta)} \prod\limits_{F \in \mathrm{facet}(\Gamma)} \theta^{F}_{i_{F}} \right\rbrace
\end{align*}
with the normalizing factor
\begin{align*}
Z(\theta) = \sum\limits_{i \in R} \prod\limits_{F \in \mathrm{facet}(\Gamma) } \theta^{F}_{i_{F}}.
\end{align*}
\end{definition}

A description of the partition matrices that define a hierarchical model can be found in Section \ref{Sec:HierarchicalModels}. A hierarchical model with the simplicial complex $\Gamma$ can be associated with its skeleton graph $G(\Gamma)$. The vertex set of this graph is the ground set $[\ell]$ and two vertices are connected by an edge if they lie together in a face in $\Gamma$. 

\begin{definition}
A simplicial complex $\Gamma$ is called \textbf{decomposable} if it is a simplex or if $\mathrm{facet}(\Gamma)$ is the union of two decomposable simplicial complexes $\Gamma_{1}, \Gamma_{2}$ such that there exist $F_{1} \in \Gamma_{1}$ and $F_{2} \in \Gamma_{2}$ with
\begin{equation*}
\bigcup\limits_{F \in \Gamma_{1}} F \cap \bigcup\limits_{\tilde{F} \in \Gamma_{2}} \tilde{F} = F_{1} \cap F_{2}.
\end{equation*}
\end{definition}

\begin{example} \label{ex:hierModels}
\begin{itemize}
\item[•] The 2x2 independence model in Example \ref{Ex:2x2Indep} is a hierarchical model on two disjoint vertices where each random variable has state space $\{1,2\}$. This is depicted in Figure \ref{Ex:HierarchicalModels} (1). This simplicial complex is decomposable.

\item[•] The decomposable simplicial complex depicted in Figure \ref{Ex:HierarchicalModels} (2) is $\Gamma = [1,2,3][3,4,5]$. 
This graph has two cliques  $ \{ 1,2,3 \}$ and $\{3,4,5\}$, which are exactly the facets of $\Gamma$.

\item[•] In the simplicial complex $\Gamma = [12][13][23]$,  the set $\{1,2,3\}$ is not a face, hence the associated graph in Figure \ref{Ex:HierarchicalModels} (3) lacks the grayed space between the nodes. This simplicial complex is not decomposable.
\end{itemize}
\end{example}

\begin{figure}[h]
\centering
\scalebox{0.65}{
\begin{tikzpicture}
\draw[] (1,1) node{(1)};
\draw[] (0,0) node {$1$};
\draw[] (0,0) circle (0.35cm);
\draw[] (2,0) node {$2$};
\draw[] (2,0) circle (0.35cm);

\draw[fill = lightgray] (6,0)--(6,-2)--(8,-1)--(6,0);
\draw[fill = lightgray] (10,0)--(10,-2)--(8,-1)--(10,0);
\draw[fill = white] (10,-2) circle (0.35cm);
\draw[fill = white] (6,0) circle (0.35cm);
\draw[fill = white] (6,-2) circle (0.35cm);
\draw[fill = white] (8,-1) circle (0.35cm);
\draw[fill = white] (10,0) circle (0.35cm);
\draw[] (10,-2) node {$5$};
\draw[] (6,0) node {$1$};
\draw[] (6,-2) node {$2$};
\draw[] (10,0) node {$4$};
\draw[] (8,-1) node {$3$};
\draw[] (8,1) node {(2)};

\draw[] (14,0)--(14,-2)--(16,-1)--(14,0);
\draw[fill = white] (14,0) circle (0.35cm);
\draw[fill = white] (14,-2) circle (0.35cm);
\draw[fill = white] (16,-1) circle (0.35cm);
\draw[] (14,0) node {$1$};
\draw[] (14,-2) node {$2$};
\draw[] (16,-1) node {$3$};
\draw[] (15,1) node{(3)};
\end{tikzpicture} }
\caption{Three different examples of hierarchical models.}\label{Ex:HierarchicalModels}
\end{figure}

In \cite{lauritzen1996} in Theorem 2.25 Lauritzen shows that every decomposable simplicial complex is conformal. The simplicial complex in Example \ref{ex:hierModels} (2) is decomposable with $F_{1} = \{ \{1,2,3\} \}$ and $F_{2} = \{ \{3,4,5\} \}$. The example in \ref{ex:hierModels} (3) is not decomposable. 
 
Now we will introduce a special ordering to the facets of a simplicial complex.
\begin{definition} \label{def:RIP}
Let $E = \{F_{1}, \dots , F_{s} \} $ be an ordering of the facets of a simplicial complex $\Gamma$. Then this ordering satisfies the \textbf{running intersection property (RIP)}, if for each $r \in [s]$ there exists a $k_{r}$ such that 
\begin{equation*}
\left( \bigcup\limits_{k = 1}^{r} F_{k} \right) \cap F_{r+1} = F_{k_{r}} \cap F_{r+1}
\end{equation*}  
\end{definition}

The connection between decomposable simplicial complexes and the running intersection property, as stated in the Lemma below, is proven for example in Lemma 5.10 in \cite{SH74}.
\begin{lemma} \label{lem:decompostoRIP}
Let $\Gamma$ be a decomposable simplicial complex. Then there exists an ordering of the facets that satisfies the RIP.
\end{lemma}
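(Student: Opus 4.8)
The plan is to prove a slightly stronger statement by strong induction on the number $s$ of facets: for a decomposable complex $\Gamma$ and \emph{any} prescribed facet $F^\star \in \mathrm{facet}(\Gamma)$, there is an ordering of $\mathrm{facet}(\Gamma)$ satisfying the RIP (Definition~\ref{def:RIP}) whose first element is $F^\star$. The lemma is then the special case in which $F^\star$ is chosen arbitrarily. Strengthening the hypothesis in this way is exactly what makes the induction close up: without control over the first facet of each piece, the naive concatenation of two RIP orderings fails. Throughout I read decomposability recursively, so that the two facet-sets $\Gamma_1,\Gamma_2$ appearing in the splitting are themselves decomposable, with the single-facet case as the base of the recursion.

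For the base case $s=1$ the ordering is a single facet and the RIP holds vacuously. For the inductive step, I would write $\mathrm{facet}(\Gamma)=\Gamma_1\sqcup\Gamma_2$ with $F_1\in\Gamma_1$, $F_2\in\Gamma_2$ and $V_1\cap V_2=F_1\cap F_2$, where $V_i:=\bigcup_{F\in\Gamma_i}F$, and assume without loss of generality that the prescribed facet $F^\star$ lies in $\Gamma_1$ (otherwise swap the roles of the two pieces). The engine of the proof is the identity $V_1\cap H=F_1\cap H$ for every $H\in\Gamma_2$: the inclusion $\subseteq$ follows from $V_1\cap H\subseteq V_1\cap V_2=F_1\cap F_2\subseteq F_1$, while $\supseteq$ is immediate from $F_1\subseteq V_1$. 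The same computation shows $F_1\cap H\subseteq F_2$, hence $F_1\cap H\subseteq F_2\cap H$ for all $H\in\Gamma_2$; this containment drives the absorption step below.

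Next I would apply the strengthened induction hypothesis to $\Gamma_1$ to obtain a RIP ordering $G_1,\dots,G_p$ with $G_1=F^\star$, and to $\Gamma_2$ to obtain a RIP ordering $H_1,\dots,H_q$ with $H_1=F_2$, and then form the concatenation $G_1,\dots,G_p,H_1,\dots,H_q$. Verifying the RIP for this ordering splits into three cases. Adding some $G_{r+1}$ involves only facets of $\Gamma_1$, so the witness is supplied by the RIP ordering of $\Gamma_1$. Adding $H_1=F_2$ after all of $\Gamma_1$ gives $V_1\cap F_2=F_1\cap F_2$ by the identity above, witnessed by $F_1$. Finally, for $H_j$ with $j\ge 2$, the intersection with all previously placed facets is $(V_1\cap H_j)\cup\big((\bigcup_{i<j}H_i)\cap H_j\big)=(F_1\cap H_j)\cup\big((\bigcup_{i<j}H_i)\cap H_j\big)$; since $H_1=F_2$ has already been placed and $F_1\cap H_j\subseteq F_2\cap H_j$, the first term is absorbed into the second, and the RIP ordering of $\Gamma_2$ then supplies a single witness $H_{k'}$ with $k'<j$. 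This establishes the RIP for the concatenated ordering, which begins with $F^\star$, completing the induction.

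The hard part will be precisely this transition from $\Gamma_1$ to $\Gamma_2$: a priori the intersection of a new $\Gamma_2$-facet with everything placed before it is a union of two facet-intersections and need not be realized by a single earlier facet. Forcing $F_2$ to appear first within $\Gamma_2$, together with the containment $F_1\cap H\subseteq F_2\cap H$, is what collapses this union to one witness, and it is the reason the induction hypothesis must be strengthened to prescribe the initial facet. The remaining cases are routine once the identity $V_1\cap H=F_1\cap H$ is in hand.
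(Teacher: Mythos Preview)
Your proof is correct. The paper does not supply its own argument for this lemma; it simply cites Lemma~5.10 of \cite{SH74}. Your strengthened induction hypothesis (prescribing the initial facet) and the absorption step via $F_1\cap H\subseteq F_2\cap H$ are exactly the standard route to this result, essentially the argument one finds in Lauritzen's \emph{Graphical Models} or in the cited reference, so there is nothing substantive to contrast. One small remark: the recursion you assume---that $\Gamma_1$ and $\Gamma_2$ are themselves decomposable---is indeed the intended reading (and the one used in \cite{lauritzen1996}, which the paper cites), even though the paper's Definition of decomposability does not state it explicitly; you are right to flag it.
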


The following is Theorem 5.3 in \cite{SH74}. It draws a connection between decomposability of a simplicial complex and one-cycle convergence of the IPS algorithm for the corresponding hierarchical model. One-cycle convergence can be achieved by requiring that the \multipart matrix is ordered so that the facets corresponding to the partition matrices satisfy the RIP. An additional condition for one-cycle convergence was proven in \cite{JV99}.

\begin{theorem}[\cite{SH74}] \label{thm:RationalIfDecomposable}
Suppose the simplicial complex $\Gamma$ is decomposable.. Let $A = A^{1,\dots,k}$ be the \multipart matrix corresponding to the ordering of the facets of $\Gamma$ that satisfies the RIP. Then the iterative proportional scaling algorithm applied to $A$ yields the maximum likelihood estimate in one cycle.
\end{theorem}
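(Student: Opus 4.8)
The plan is to reduce the claim to Birch's theorem (Proposition~\ref{prop:Birch}) and then track a single explicit invariant through the one sweep of IPS. First I would observe that IPS never leaves the model: the $\ell$-th step multiplies the current estimate by the factor $A^{i}d/A^{i}p^{\ell-1}$, which is constant on the cells of the facet $F_i$ and hence has logarithm in $\rowspan(A^{i})\subseteq\rowspan(A)$. Since $p^0$ is the uniform distribution, which lies in $\mathcal{M}_\Gamma$ (its log is a multiple of the all-ones vector, itself in $\rowspan(A)$), induction gives $p^\ell\in\mathcal{M}_\Gamma$ for every $\ell$. Consequently, by Proposition~\ref{prop:Birch} it suffices to prove that after the full cycle the estimate matches every facet marginal of the data, that is, $A^{r}p^{k}=A^{r}d$ (equivalently $(p^{k})^{F_r}=d^{F_r}$) for all $r\in[k]$.

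Writing $H_s=F_1\cup\cdots\cup F_s$ and letting $U_s=[\ell]\setminus H_s$ be the untouched coordinates, the heart of the argument is the inductive invariant that after the $s$-th projection
\[
p^{s}_i \;=\; \hat m^{(s)}_{i_{H_s}}\cdot\frac{1}{r_{U_s}},
\]
where $\hat m^{(s)}$ is the maximum likelihood estimate for the subcomplex $\Gamma_s$ on the facets $F_1,\dots,F_s$ (which is again decomposable, being a prefix of a RIP ordering) fitted to the marginals of $d$, and $r_{U_s}=\prod_{t\in U_s}|R_t|$. In words, the $H_s$-marginal of $p^{s}$ is already the MLE for the first $s$ facets, while the remaining coordinates are conditionally uniform. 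The base case $s=1$ is a direct computation: projecting the uniform $p^{0}$ onto $L^{1}$ rescales it to $d^{F_1}$ on the $F_1$-coordinates and leaves the rest uniform.

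For the inductive step I would project $p^{s}$ onto $L^{s+1}$ and compute its $F_{s+1}$-marginal. Here the running intersection property is indispensable: since $\bigl(\bigcup_{j\le s}F_j\bigr)\cap F_{s+1}=F_{k_s}\cap F_{s+1}=:S_{s+1}$ with $k_s\le s$, the separator $S_{s+1}$ sits inside the already-processed facet $F_{k_s}$, so the invariant forces $(\hat m^{(s)})^{S_{s+1}}=d^{S_{s+1}}$ --- the separator marginal is already correct. Marginalizing the conditionally uniform part over $F_{s+1}\setminus H_s$ then gives $(p^{s})^{F_{s+1}}_{i_{F_{s+1}}}=d^{S_{s+1}}_{i_{S_{s+1}}}/r_{F_{s+1}\setminus H_s}$, so the projection multiplies $p^{s}$ by exactly $d^{F_{s+1}}/d^{S_{s+1}}$ once the state-space size factors are reconciled (routine bookkeeping, using $r_{U_s}=r_{F_{s+1}\setminus H_s}\,r_{U_{s+1}}$). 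Combining this with the decomposable product recursion $\hat m^{(s+1)}=\hat m^{(s)}\,d^{F_{s+1}}/d^{S_{s+1}}$ --- equivalently the toric fiber product MLE formula of \cite{AKK20} applied along $S_{s+1}$ --- yields $p^{s+1}_i=\hat m^{(s+1)}_{i_{H_{s+1}}}/r_{U_{s+1}}$, the invariant at step $s+1$. Taking $s=k$ gives $U_k=\emptyset$, so $p^{k}=\hat m^{(k)}$, which has all facet marginals equal to those of $d$; together with $p^{k}\in\mathcal{M}_\Gamma$ and Proposition~\ref{prop:Birch} this identifies $p^{k}$ as the MLE.

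The main obstacle is isolating and propagating the invariant, and in particular the separator step: one must verify that each new facet meets the processed region only in a separator whose marginal has already been fixed, which is exactly the content of the RIP and of decomposability (and is precisely what fails for a non-decomposable complex such as $\Gamma=[12][13][23]$). Once that observation is in place, the residual work --- carrying the conditionally uniform factor and reconciling the size factors $r_{U_s}$ --- is routine.
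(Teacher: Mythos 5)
Your proof is correct and follows essentially the same route the paper relies on: the paper cites this result from \cite{SH74} and notes (in the remark after Theorem \ref{thm:GRIPOneCycle}) that the argument there proceeds by first establishing the MLE as a normalized product of sub-model MLEs and then inducting on the facets, which is exactly your invariant that $p^{s}$ equals the MLE of the prefix subcomplex tensored with the uniform distribution on the untouched coordinates. Your use of the separator identity from the RIP and the product recursion $\hat m^{(s+1)}=\hat m^{(s)}\,d^{F_{s+1}}/d^{S_{s+1}}$ (equivalently the \cite{AKK20} formula) also mirrors the paper's own generalized proof of Theorem \ref{thm:GRIPOneCycle} via Lemma \ref{lem:GRIPMLE}, specialized to hierarchical models.
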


In the following section, we define a generalized running intersection property that can be applied to a broader range of partition models. We show that \multipart matrices that satisfy the generalized running intersection property exhibit one-cycle convergence under iterative proportional scaling.

\section{Generalized Running Intersection Property}\label{sec:GRIP}

In this section, we define the generalized running intersection property, or GRIP.
We show that a partition model satisfies the GRIP if and only if it can be written
as an iterated toric fiber product of partition models.
This allows us to give a formula for the maximum likelihood estimate 
of a model that satisifies the GRIP.
We use this formula to show that iterative proportional scaling
exhibits one-cycle convergence on models that satisfy the GRIP.

\begin{definition}\label{def:matrixconnected}
Let $B$ and $C$ be two partition matrices with the same number of columns and with rows $\alpha^B_u$ and $\alpha^C_{u'}$.
Two rows $\alpha^B_u$ and $\alpha^C_{u'}$ are \textbf{connected} if their supports intersect nontrivially;
that is, if $I^B_u \cap I^C_{u'} \neq \emptyset$.
\end{definition}

For a matrix to satisfy the GRIP, we require that rows from one partition to the next are connected in a way that satisfies certain conditions. The following definition is motivated by the way column weights (see Definition \ref{def:colweight}) appear in the expressions produced by each step of the IPS algorithm.

\begin{definition}\label{def:wellconnected}
For a \multipart matrix $A = A^{1,\dots,k}$, define $c^\ell_j$ to be $j$-th column weight for the matrix $A^{1,\dots,\ell}$ obtained by only considering the first $\ell$ partitions of $A$. Then $A$ is \textbf{well-connected} if for any row vector $\alpha^\ell_u$ with $\ell > 1$, we have that $(c^{\ell}_j/c^{\ell-1}_j)=(c^{\ell}_{j'}/c^{\ell-1}_{j'})$ for all $j,j'\in I^\ell_u$. We call this quantity the \textbf{connection ratio} for $\alpha^\ell_u$ and denote this quantity as $C^\ell_u$ with the convention that $C^1_u =|I^1_u|$.
\end{definition}

\begin{remark}\label{Rem:ConnectionRatio}
Suppose that $A$ is a well-connected matrix. Then for any index $j \in [m]$, we see that the column weight $c_j$ can be expressed as the product of the connection ratios,
$c_j = \prod_{\ell =1}^k C^\ell_{\Scal(\ell,j)}.$
\end{remark}

\begin{example}\label{ex:connectionratio}
Consider the matrix $A$ in Figure \ref{Fig:MatrixCR}. We note that it is well-connected and display the connection ratios of each row to the right of the matrix.

$$
\begin{matrix}A & C_u^\ell\\
\begin{pmatrix}
 1 & 1 & 1 & 1 & 1 & 1 & 1  & \cdot & \cdot & \cdot & \cdot & \cdot & \cdot & \cdot\\
 \cdot & \cdot & \cdot & \cdot & \cdot & \cdot & \cdot  & 1 & 1 & 1 & 1 & 1 & 1 & 1 \\
 \hdashline
 1 & 1 & 1  & \cdot & \cdot & \cdot & \cdot & 1 & 1 & 1  & \cdot & \cdot & \cdot & \cdot\\
 \cdot & \cdot & \cdot  & 1 & 1 & 1 & 1 & \cdot & \cdot & \cdot & 1 & 1 & 1 & 1\\
 \hdashline
  1 & \cdot & \cdot & \cdot & \cdot & \cdot & \cdot & 1 & \cdot & \cdot & \cdot & \cdot & \cdot & \cdot \\
 \cdot & 1 & \cdot & \cdot & \cdot & \cdot & \cdot & \cdot & 1 & \cdot & \cdot & \cdot & \cdot & \cdot \\
 \cdot & \cdot & 1 & \cdot & \cdot & \cdot & \cdot & \cdot & \cdot & 1 & \cdot & \cdot & \cdot & \cdot \\
 \cdot & \cdot & \cdot & 1 & \cdot & 1 & \cdot & \cdot & \cdot & \cdot & 1 & \cdot & 1 & \cdot \\
 \cdot & \cdot & \cdot & \cdot & 1 & \cdot & 1 & \cdot & \cdot & \cdot & \cdot & 1 & \cdot & 1 \\
 \end{pmatrix} &\begin{matrix}
 7\\
 \\
 \frac{3}{7}\\
 \\
 \frac{1}{3}\\
 \\
 \frac{1}{3}\\
 \\
 \frac{1}{2}\\
 \end{matrix}
 \begin{matrix}
 \\
 7\\
 \\
 \frac{4}{7}\\
 \\
 \frac{1}{3}\\
 \\
 \frac{1}{2}\\
 \\
 \end{matrix}\end{matrix}
 $$
  \captionsetup{width=.8\linewidth}
 \captionof{figure}{The matrix $A$ with its connection ratios displayed on the right.} \label{Fig:MatrixCR}
\end{example}
\vspace{0.5cm}
Let $A^1, \dots, A^{\ell}$ be a set of partition matrices with $m$ columns.
Let $\beta$ be number of \emph{distinct} columns of $A^{1,\dots,\ell}$.
Define a labeling of the columns of $A^{1,\dots,\ell}$, $\lambda: [m] \rightarrow [\beta]$
such that $\lambda(j) = \lambda(j')$ if and only if the $j$th and $j'$th columns of $A^{1,\dots,\ell}$ are equal.
\begin{definition}
We define the partition matrix $B = \Cup_{n=1}^{\ell}A^n$ to be the $\beta \times m$ matrix with $j$th column $e_{\lambda(j)}$.
Since the labeling $\lambda$ of the columns of $A^{1,\dots,\ell}$ simply permutes the rows of $\Cup_{n=1}^{\ell} A^{n}$,
we omit the specification of $\lambda$ from this notation.
\end{definition}

In order to define the GRIP, we require that the tree associated to the \multipart matrix be staged.
The word  ``floret'' in the definition below  is deliberately suggestive of the terminology for staged trees discussed in Section \ref{ssec:StagedTrees}.

\begin{definition}\label{def:MatrixFloret}
Let $B$ and $C$ be two partition matrices with the same number of columns and with rows $\alpha^B_u$ and $\alpha^C_v$.
The matrices $B$ and $C$ satisfy the \textbf{floret condition} if
for every two rows of $B$, $\alpha^B_u$ and $\alpha^B_{u'}$, the sets of rows of $C$
that are connected to $\alpha^B_u$ and $\alpha^B_{u'}$ are disjoint or equal.
In this case, the set of rows of $B$ connected to a row $\alpha^C_v$
is called a \textbf{floret} of $B$ and the set of rows of $C$ connected to a row
$\alpha^B_u$ is a \textbf{floret} of $C$.
\end{definition}

The florets of the matrix $C$ correspond exactly to the second-level florets of the tree associated with the \multipart matrix $BC$. In fact, $BC$ satisfies the floret condition if and only if $\T_{BC}$ is a staged tree.

Suppose that $B$ and $C$ are two partition matrices that satisfy the floret condition,
and let $\F^C_1, \dots, \F^C_f$ be the distinct florets of $C$
with respect to the matrix $B$.
For each $t \in [f]$, let $\F^B_t$ be the set of all rows of $B$ that are connected to the rows of $\F^C_t$.

\begin{remark}
The floret condition in Definition \ref{def:MatrixFloret} implies that $\F^B_t$ is well-defined for any $t\in [f]$ and that this encompasses all florets of $B$. Indeed the florets of $B$ are in one-to-one correspondence with the florets of $C$ in this way.
\end{remark}

Let $t(u,v)$ be the index in $[f]$ such that $\alpha^{B}_u \in \F^B_{t(u,v)}$ and $\alpha^C_v \in \F^C_{t(u,v)}$.
In fact, $t(u,v)$ is determined by the row $\alpha_u^B$ \emph{or} the row $\alpha_v^C$; 
it is not necessary to provide both indices.
Hence, we write $t(u,\bullet)$ to be the index such that $\alpha_u^B \in \Fcal^B_{t(u,\bullet)}$.
Similarly, $t(\bullet,v)$ is the index such that $\alpha_v^C \in \Fcal^C_{t(\bullet,v)}$.
Thus $\alpha_u^B$ and $\alpha_v^C$ are connected if and only if $t(u,\bullet) = t(\bullet,v)$. Note that the function $t(u,v)$ implicitly depends on the matrices $B$ and $C$.

\begin{example}
Consider the matrix in Example \ref{ex:connectionratio}, and let $B=  A^{1} \Cup A^{2}$, which results in the following matrix:

$$
B = A^{1} \Cup A^{2} = \begin{pmatrix}
 1 & 1 & 1  & \cdot & \cdot & \cdot & \cdot & \cdot & \cdot & \cdot  & \cdot & \cdot & \cdot & \cdot\\
 \cdot & \cdot & \cdot  & 1 & 1 & 1 & 1 & \cdot & \cdot & \cdot & \cdot & \cdot & \cdot & \cdot\\
 \cdot & \cdot & \cdot  & \cdot & \cdot & \cdot & \cdot & 1 & 1 & 1  & \cdot & \cdot & \cdot & \cdot\\
 \cdot & \cdot & \cdot  & \cdot & \cdot & \cdot & \cdot & \cdot & \cdot & \cdot & 1 & 1 & 1 & 1\\
 \end{pmatrix}.
 $$
 Now let $C= A^3$. Note that $B$ and $C$ satisfy the floret condition and that the florets of $C$ are given by $\Fcal^C_1 = \{\alpha^C_1, \alpha^C_2, \alpha^C_3\}$ and $\Fcal^C_2 = \{\alpha^C_4, \alpha^C_5\}$.  Note that the florets of $C$ are exactly the third-level florets of the staged tree $\T_A$. We see that $t(\bullet, 1)=t(\bullet, 2)=t(\bullet, 3)=1$ and $t(\bullet, 4)=t(\bullet, 5)=2$.  The corresponding florets of $B$ are given by $\Fcal^B_1 = \{\alpha^B_1, \alpha^B_3\}$ and $\Fcal^B_2 = \{\alpha^B_2, \alpha^B_4\}$. This implies that $t(1,\bullet)=t(3,\bullet)=1$ and $t(2,\bullet)=t(4,\bullet)=2$.
At this point the function $t(u,v)$ is completely defined and it can be used to answer questions like, ``Which floret of $C$ is connected to the row $\alpha^B_3$?" This is given by $\Fcal_{t(3,\bullet)}^C = \Fcal_1^C$.
\end{example}

For a set of partition matrices $A^1, \hdots, A^\ell$ with $m$ columns, let $B = \Cup_{n=1}^{\ell} A^n$. Let $C = A^{\ell+1}$ have $\gamma$ rows and $m$ columns. 
Then the matrix $BC$ has $m$ columns of the form $[e_u \ e_v]^T$ where $u \in [\beta]$ and $v \in [\gamma]$. 
Let $\omega_{uv}$ denote the number of columns of $C$ of the form $[e_u \ e_v]^T$. 
Then the columns of $BC$ can be indexed by triples of the form $(u,v,s)$ for $s \in \{0,\dots,\omega_{uv}-1\}$
where column $(u,v,s)$ is the $(s+1)$st column of $BC$ of the form $[e_u \ e_v]^T$.
We adopt this column indexing scheme for the remainder of this section.

\begin{definition}
Suppose that $B$ and $C$ satisfy the floret condition with $f$ florets, $\Fcal^C_1,\dots,\Fcal^C_f$.
We define the matrix $B \Cap C$ to be the $f \times m$ matrix whose columns are indexed by the triples $(u,v,s)$
such that the $(u,v,s)$ entry of the $t$th row of $B \Cap C$ is equal to $1$ if $t = t(u,v)$ and $0$ otherwise. Note that any two columns with indices $(u,v,s)$ and $(u,v,s')$ are identical.
\end{definition}

In other words, a column of $B \Cap C$ are indicator vectors for the floret that the corresponding column's nonzero rows belong to in $BC$. 
Note that $B \Cap C$ is only defined when $B$ and $C$ satisfy the floret condition.

\begin{definition}\label{def:GRIP}
Let $A^1, \dots, A^k$ be partition matrices. For each $\ell$, let $B_{\ell}$ denote $\Cup_{n=1}^{\ell}A^{n}$. 
Then the \multipart matrix $A^{1,\dots,k}$ satisfies the \textbf{generalized running intersection property}, or \textbf{GRIP} if for each $\ell \in [k-1]$,
\begin{enumerate}
\item the matrix $B_{\ell}A^{\ell+1}$ is well-connected,
\item $B_{\ell}A^{\ell+1}$ satisfies the floret condition, and
\item the rows of  $B_{\ell} \Cap A^{\ell+1}$ lie in the rowspan of $A^{1,\dots,\ell}$.
\end{enumerate}
\end{definition}

\begin{remark}\label{rem:EquivGRIP}
The first point in the above definition is equivalent to $A^{1,\hdots,k}$ being well-connected. 
Additionally it is easy to see that the second point is satisfied if and only if $\T_{A^{1,\hdots,k}}$ is a staged tree. We frame these conditions above in an iterative way to facilitate the proofs in this section.
\end{remark}

\begin{example}\label{ex:GRIPtree} We show that the matrix $A$ from Example \ref{ex:connectionratio} satisfies the GRIP. Since $A$ is well-connected and $\T_A$ is a staged tree (see Example \ref{ex:firststagedtree}) $A$ satisfies conditions (1) and (2) from Definition \ref{def:GRIP}. We have included the the associated tree below, as well as all the necessary partition matrix operations to check the third condition of the GRIP.

{\centering
\begin{tikzpicture}
[grow=right, edge from parent/.style={draw,-latex}, line width = 0.75pt]

\node[circle, draw]  {}
child[level distance = 5mm, sibling distance=2.5cm, ] {node[circle,draw, fill= violet!40] {} { 
	child[sibling distance=1.25cm, level distance =5mm]{node[circle,black, draw,fill= red!50, solid] {}
		child[sibling distance=0.5cm,  level distance = 10mm]{node[circle,draw, double, solid] {}
		edge from parent[double, solid]}
		child[sibling distance=0.5cm, level distance = 10mm]{node[circle,draw, double, solid] {}
		edge from parent[double, solid]}
	edge from parent[line width = 0.75pt, black, loosely dotted]}
	child[sibling distance=1.25cm , level distance = 5mm]{node[circle,solid, draw,  fill=blue!50, line width = 0.75pt] {}
		child[sibling distance=0.5cm,  level distance = 10mm]{node[circle,draw, solid] {}
		edge from parent[line width =  0.75pt, black, solid]}
		child[sibling distance=0.5cm, level distance = 10mm]{node[circle,draw, solid] {}
		edge from parent[line width =  0.75pt, black, solid]}
		child[sibling distance=0.5cm, level distance = 10mm]{node[circle,draw, solid] {} 
		edge from parent[line width =  0.75pt, black, solid]}     
	edge from parent[line width =  0.75pt, black, densely dotted]}                                
} edge from parent[line width =  0.75pt,black, dotted]} 
child[level distance = 5mm,sibling distance=2.5cm] {node[circle,draw, fill= violet!40] {} 
		edge from parent[line width =  0.75pt, black]
	child[sibling distance=1.25cm, level distance = 5mm]{node[circle,draw,black, solid, fill=red!50] {} 
		child[sibling distance=0.5cm, level distance = 10mm]{node[circle,draw, double, solid] {}
		edge from parent[double,solid, black]}
		child[sibling distance=0.5cm, level distance = 10mm, solid]{node[circle,draw, double, solid] {}
		edge from parent[double, black,  solid]}   edge from parent[line width = 0.75pt, black, dash pattern=on 3pt off 3pt]            
	}
	child[sibling distance=1.25cm, level distance = 5mm]{node[circle,black, draw, fill=blue!50, solid]{}
		child[sibling distance=0.5cm,draw,  level distance = 10mm,solid]{node[circle,draw, solid] {} edge from parent[line width = 0.75pt, black, solid] }
		child[sibling distance=0.5cm,  level distance = 10mm]{node[circle,draw, solid] {}edge from parent[line width = 0.75pt, black, solid] }
		child[sibling distance=0.5cm,  level distance = 10mm]{node[circle,draw, solid] {} edge from parent[line width = 0.75pt, black, solid] } edge from parent[line width = 0.75pt, black, dash pattern=on 6pt off 6pt]   
	}  edge from parent[line width = 0.75pt, black, dash pattern=on 4.5pt off 4.5pt]
};
 \draw[] (-5.6,2) node{$A^{1} \Cap A^{2} \  = \, \begin{pmatrix}
 1 & 1 & 1 & 1 & 1 & 1 & 1  & 1 & 1 & 1 & 1 & 1 & 1 &1 \\
 \end{pmatrix} $};
  \draw[] (-5.5,0.5) node{$A^{1} \Cup A^{2} = \begin{pmatrix}
 1 & 1 & 1  & \cdot & \cdot & \cdot & \cdot & \cdot & \cdot & \cdot  & \cdot & \cdot & \cdot & \cdot\\
 \cdot & \cdot & \cdot  & 1 & 1 & 1 & 1 & \cdot & \cdot & \cdot & \cdot & \cdot & \cdot & \cdot\\
 \cdot & \cdot & \cdot  & \cdot & \cdot & \cdot & \cdot & 1 & 1 & 1  & \cdot & \cdot & \cdot & \cdot\\
 \cdot & \cdot & \cdot  & \cdot & \cdot & \cdot & \cdot & \cdot & \cdot & \cdot & 1 & 1 & 1 & 1\\
 \end{pmatrix} $};
   \draw[] (-6,-1.5) node{$(A^{1} \Cup A^{2}) \Cap A^{3} = \begin{pmatrix}
 1 & 1 & 1  & \cdot & \cdot & \cdot & \cdot & 1 & 1 & 1  & \cdot & \cdot & \cdot & \cdot\\
 \cdot & \cdot & \cdot  & 1 & 1 & 1 & 1 & \cdot & \cdot & \cdot & 1 & 1 & 1 & 1\\
 \end{pmatrix} $};
   \draw[ dash pattern=on 3pt off 3pt] (-6.65,1.5) rectangle (-4.65,-0.5);
  \draw[ dash pattern=on 6pt off 6pt] (-8.25,1.5) rectangle (-6.75,-0.5);
 \draw[densely dotted] (-4.5,1.5) rectangle (-3,-0.5);
  \draw[loosely dotted] (-2.85,1.5) rectangle (-0.9,-0.5);
 \end{tikzpicture} 
 }
  \captionsetup{width=.8\linewidth}
 \captionof{figure}{The matrices $A^{1} \Cap A^{2}$, $A^{1} \Cup A^{2}$ and $(A^{1} \Cup A^{2}) \Cap A^{3}$ on the left and the tree associated to $A$ on the right.   }
\vspace{0.25cm}
 
For $\ell=1$, $B_1=A^{1}$ and $B_1\Cap A^2 = A^2$, so  (3) is trivially satisfied. Thus all that is left is to check that the rows of $B^2\Cap A^3$ lie in the rowspan of $A$. First consider $B^2 = A^{1} \Cup A^{2}$. The rows of this matrix are indexed by the distinct columns of the \multipart matrix $A^{1,2}$ and thus, all the distinct paths in $\T_{A^{1,2}}$. 

Note the rows of the matrix $B^2\Cap A^3=(A^{1} \Cup A^{2}) \Cap A^{3}$ index the level three florets of the staged tree $\T_A$. Since $B^2\Cap A^3 = A^2$, we can easily see that (3) is satisfied.
\end{example}

We now describe the iterated toric fiber product structure on the partition model defined by a matrix $A^{1,\dots,k}$ that satisfies the GRIP. The following lemma is crucial to our construction of the toric fiber product.

\begin{lemma}\label{lem:NumberTheory}
Suppose that $BC$ is well-connected. Then there exist positive integers $x_1, \dots, x_{\beta}, y_1, \dots, y_{\gamma}$ such that whenever $\omega_{uv}$ is nonzero, we have $\omega_{uv} = x_u y_v$ for all $u \in [\beta]$ and $v \in [\gamma]$.
\end{lemma}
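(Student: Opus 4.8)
The plan is to read off the rank-one structure that well-connectedness forces on the array $\Omega = (\omega_{uv})$, to cut it into blocks using the floret condition, and then to clear denominators with a greatest-common-divisor argument. First I would translate the hypothesis. For a column of $BC$ of type $(u,v)$ one has $c^1_j = a_u$, where $a_u := \sum_{v'} \omega_{uv'}$ is the number of columns of $B$ equal to $e_u$, and $c^2_j = \omega_{uv}$. Hence the connection ratio of the row $\alpha^C_v$ is $C^2_v = \omega_{uv}/a_u$, and well-connectedness says exactly that this ratio is independent of the choice of $u$ with $\omega_{uv} \neq 0$. Therefore, on its support,
\[
\omega_{uv} = a_u\, C^2_v ,
\]
so every $2 \times 2$ minor of $\Omega$ built from nonzero entries vanishes; in other words, $\Omega$ is rank one on its support.

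The second step organizes that support into blocks. Here I would use the floret condition, which matches up florets $\Fcal^B_1,\dots,\Fcal^B_f$ of $B$ with florets $\Fcal^C_1,\dots,\Fcal^C_f$ of $C$ so that $\omega_{uv} \neq 0$ precisely when $\alpha^B_u$ and $\alpha^C_v$ lie in the same floret. Thus within a fixed floret $t$ one has $\omega_{uv} > 0$ for every $(u,v) \in \Fcal^B_t \times \Fcal^C_t$, while $\omega_{uv} = 0$ across distinct florets. Since the identity $\omega_{uv} = x_u y_v$ only constrains nonzero entries, it suffices to factor each block $M^{(t)} := (\omega_{uv})_{u \in \Fcal^B_t,\, v \in \Fcal^C_t}$ separately and then to declare $x_u$, respectively $y_v$, to be the value coming from the unique floret containing $u$, respectively $v$.

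It remains to factor a single block, which is where the real content lies. By the first step $M^{(t)}$ is a positive-integer matrix of rank one, and the core claim is purely number-theoretic: a rank-one matrix with positive integer entries factors as $M_{uv} = x_u y_v$ into positive integers. I would prove this by setting $x_u := \gcd_v M_{uv}$, the greatest common divisor of the entries in row $u$, and $y_v := M_{uv}/x_u$. Because any two rows of a rank-one matrix are proportional with a rational ratio, a short divisibility argument (writing the ratio in lowest terms) shows that $y_v$ is independent of the row used to define it and is a positive integer, being $M_{uv}$ divided by a divisor of it. Applying this on each block and assembling the pieces produces the $x_u$ and $y_v$ demanded by the lemma.

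The main obstacle is the integrality in this final step. The rational factorization $\omega_{uv} = a_u C^2_v$ is immediate, but the connection ratios $C^2_v$ are generally not integers, so one cannot simply take $x_u = a_u$ and $y_v = C^2_v$; the gcd rescaling is what repairs this. It is worth stressing that the block structure supplied by the floret condition is doing essential work here: well-connectedness by itself only yields rank one on the support, and when the support is connected but not a complete bipartite block the rescaling can fail to be simultaneously integral. The floret condition, which holds throughout our application to $B_\ell A^{\ell+1}$ inside the GRIP, is precisely what guarantees each block is complete bipartite and thereby makes the argument valid.
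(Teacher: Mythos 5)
Your proof is correct and follows essentially the same route as the paper: both extract the rational rank-one factorization $\omega_{uv}=C^1_uC^2_v$ from well-connectedness, restrict to the complete bipartite blocks given by the florets, and then repair integrality. The only difference is that the paper outsources the final number-theoretic step (a rank-one positive integer matrix factors into positive integer vectors) to a cited external proposition, whereas you supply the gcd argument yourself.
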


\begin{proof}
By Remark \ref{Rem:ConnectionRatio}, for all $u \in [\beta], v \in [\gamma]$ such that $t(u,\bullet) = t(\bullet, v)$, we have $\omega_{uv} = C^1_u C^2_v$ where $C^1_u$ is an integer and $C^2_v$ is a rational number. Let $C^2_v = a_v/b_v$ for positive coprime integers $a_v$ and $b_v$. Since $(C^1_u a_v)/b_v$ is an integer for all $u$ and $v$, we have that $b_v$ divides $C^1_u$ for all $u$ and $v$. Let $x_u = C^1_u / \mathrm{lcm}(b_{h})_{h \in [\gamma]}$ and let $y_v = (a_v \mathrm{lcm}(b_{h})_{h \in [\gamma]})/b_v$. Each of these is an integer and satisfies $x_u y_v = C^1_u C^2_v = \omega_{uv}$. Applying this result to each of the florets yields the desired integers.
\end{proof}

We assume throughout that $A^{1,\dots,k}$ satisfies the GRIP. Consider the submatrix $A^{1,\dots,\ell+1}$, where $\ell \in [n-1]$. Fix $B = \Cup_{n=1}^{\ell} A^n$ and $C = A^{\ell+1}$ as in the paragraph preceding Definition \ref{def:GRIP}.

Define the polynomial ring, 
\[ R = \C[p_{u,v,s}^{t(u,v)} \mid u \in [\beta], v \in [\gamma], s \in \{0,\dots,\omega_{uv}-1\}].\]
Let $x_1,\dots, x_{\beta}, y_1,\dots, y_{\gamma}$ be positive integers such that $\omega_{uv} = x_uy_v$; these are guaranteed to exist by Lemma \ref{lem:NumberTheory}. Define two other polynomial rings, 
\begin{align*}
R_B &= \C[q_{u,s'}^{t(i,\bullet)} \mid u \in [\beta], s' \in \{0,\dots,x_u-1\}], \text{ and}\\
R_C &= \C[r_{v,s''}^{t(\bullet,v)} \mid v \in [\gamma], s'' \in \{0, \dots, y_v-1\}].
\end{align*}

Recall that $\lambda$ maps the columns of $A^{1,\dots,\ell}$ onto $[\beta]$ by labeling
the columns of $A^{1,\dots,\ell}$ so that $\lambda(u) = \lambda(v)$ if and 
only if the $u$th and $v$th columns of $A^{1,\dots,\ell}$ are equal.
Let $\tilde{A}^{1,\dots,\ell}$ be the matrix defined as follows.
Its columns are indexed by ordered pairs $(u, s')$ such that $u \in [\beta]$ and $s' \in \{0,\dots,x_u-1\}$.
The $(u,s')$ column of $\tilde{A}^{1,\dots,\ell}$ is equal to the columns of $A^{1,\dots,\ell}$ with label $u$.
In other words, the matrix $\tilde{A}^{1,\dots,\ell}$ has the same underlying set of columns as $A^{1,\dots,\ell}$;
however, the column with label $u$ is repeated in $\tilde{A}^{1,\dots,\ell}$ only $x_u$ times.
 In general, we say that a matrix $\bar{A}$ is a {\bf compression} of $A$ if $A$ and $\bar{A}$  
have the same underlying set of columns,
and $\bar{A}$ contains at most as many copies of each column as $A$.

Similarly, let $\tilde{A}^{\ell+1}$ have columns indexed by ordered pairs $(v,s'')$ such that $v \in [\gamma]$ and $s'' \in \{0, \dots, y_v -1\}$.
The $(v,s'')$ column of $\tilde{A}^{\ell+1}$ is equal to the $v$th standard basis vector. 
The matrix $\tilde{A}^{\ell+1}$ has the same underlying set of columns as $A^{\ell+1}$, 
and this underlying set is equal to $\{e_1, \dots, e_{\gamma}\}$; however, the $v$th standard basis vector
is repeated in $\tilde{A}^{\ell+1}$ only $y_v$ times.

Recall that for any integer matrix $A$, $I(A)$ is vanishing ideal of the log-linear statistical model $\Mcal_A$ as described in Proposition \ref{Prop:SameRowspan}.
We define a ring homomorphism by
\begin{alignat}{3}\label{eq:TFP}
\psi: \ & \quad R  & \rightarrow & R_B / I(\tilde{A}^{1,\dots,\ell}) \otimes_{\C} R_C / I(\tilde{A}^{\ell+1}) \\
& p_{u,v,s}^{t(u,v)} &  \mapsto & q_{u,s'}^{t(u,\bullet)} \otimes_{\C} r_{v,s''}^{t(\bullet,v)},\nonumber
\end{alignat}
where $s', s''$ are the nonnegative integers such that $s = s' y_v + s''$.
Note that each $s \in \{0,\dots, \omega_{uv}-1\}$ has a unique representation in this way. 
Define a multigrading on $R_B$ by $\deg(q_{u,s'}^{t(u,\bullet)}) = e_{t(u,\bullet)}$ and a multigrading on $R_C$ by $\deg(r_{v,s''}^{t(\bullet,v)}) = e_{t(\bullet,v)}$.
We will show that $\ker(\psi)$ is the toric fiber product of $I(\tilde{A}^{1,\dots,\ell})$ and $I(\tilde{A}^{\ell+1})$ with respect to this multigrading
and is equal to $I(A^{1,\dots,\ell+1})$.

\begin{prop}\label{prop:Multihom}
Let $\bar{A}^{1,\dots,\ell}$ and $\bar{A}^{\ell+1}$ be any compressions of $A^{1,\dots,\ell}$ and $A^{\ell}$, respectively.
The toric ideals $I(\bar{A}^{1,\dots,\ell})$ and $I(\bar{A}^{\ell+1})$ are multihomogeneous 
with respect to the multigradings $\deg(q_{u,s'}^{t(u,\bullet)}) = e_{t(u,\bullet)}$ and $\deg(r_{v,s''}^{t(\bullet,v)}) = e_{t(\bullet,v)}$.
\end{prop}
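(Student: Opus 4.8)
The plan is to reduce the statement to the standard criterion that a toric ideal $I(M)$ is homogeneous with respect to a multigrading given by a weight matrix $D$ if and only if every row of $D$ lies in the $\mathbb{Q}$-rowspan of $M$. Indeed, $I(M)$ is generated by binomials $x^{w}-x^{w'}$ with $w-w' \in \ker_{\Z} M$, and such a binomial is homogeneous for the grading exactly when $D(w-w')=0$; thus homogeneity of all generators is equivalent to $\ker_{\Z} M \subseteq \ker_{\Z} D$, which in turn is equivalent to $\rowspan(D)\subseteq \rowspan(M)$ (an integer matrix's integer kernel spans its rational kernel). So for each of the two ideals it suffices to exhibit the rows of the corresponding weight matrix as rational combinations of the rows of the relevant compressed parametrizing matrix.

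For the ideal $I(\bar A^{\ell+1})$ I would argue directly. Its variables $r_{v,s''}$ are indexed by $(v,s'')$, so the weight matrix $D_C$ has $(v,s'')$-column $e_{t(\bullet,v)}$, and its $t$th row is the indicator of those columns with $t(\bullet,v)=t$. Since $\bar A^{\ell+1}$ is a compression of the partition matrix $C=A^{\ell+1}$, its columns are standard basis vectors and its $v$th row is exactly the indicator of the columns labelled by $v$. Because the floret condition (Definition \ref{def:MatrixFloret}) guarantees that the florets $\F^C_1,\dots,\F^C_f$ partition the rows of $C$, the $t$th row of $D_C$ is simply $\sum_{v:\,t(\bullet,v)=t}(\text{the $v$th row of }\bar A^{\ell+1})$, a nonnegative integer combination of rows of $\bar A^{\ell+1}$. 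Hence $\rowspan(D_C)\subseteq\rowspan(\bar A^{\ell+1})$ and $I(\bar A^{\ell+1})$ is multihomogeneous. Notably this half uses only the floret condition, not part (3) of the GRIP.

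For $I(\bar A^{1,\dots,\ell})$ the argument is where part (3) of the GRIP enters, and the bookkeeping here is the main obstacle. The weight matrix $D_B$ has $(u,s')$-column $e_{t(u,\bullet)}$, so its $t$th row, viewed as a function of the column index, depends only on the class $u=\lambda(j)$ of the column. The same is true of the rows of $A^{1,\dots,\ell}$ (equal columns carry equal entries) and of the rows of $B_\ell \Cap A^{\ell+1}$, whose $t$th row takes value $1$ on column $(u,v,s)$ precisely when $t(u,\bullet)=t$. Thus I would first pass to the $\beta$ distinct columns: restricting to one representative per $\lambda$-class turns the GRIP hypothesis that the rows of $B_\ell \Cap A^{\ell+1}$ lie in $\rowspan(A^{1,\dots,\ell})$ into the statement that the floret-indicator vectors $\hat\delta_t\in\mathbb{Q}^{\beta}$, with $(\hat\delta_t)_u=1$ iff $t(u,\bullet)=t$, lie in the rowspan of the distinct-column matrix $\hat A$. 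Restriction of coordinates preserves membership in a span, so this implication is immediate.

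Finally I would lift this relation back to the $(u,s')$-indexed columns of the compression. Both $D_B$ and $\bar A^{1,\dots,\ell}$ are obtained from the vectors $\hat\delta_t$ and from $\hat A$ by the very same operation, namely duplicating the column indexed by $u$ into exactly the copies $s'$ that are present in $\bar A^{1,\dots,\ell}$. Consequently any rational relation $\hat\delta_t=\sum_i c_i\hat\alpha_i$ among distinct-column vectors persists verbatim after this common duplication, giving (row $t$ of $D_B$)$=\sum_i c_i(\text{row $i$ of }\bar A^{1,\dots,\ell})$. Hence $\rowspan(D_B)\subseteq\rowspan(\bar A^{1,\dots,\ell})$, and the first ideal is multihomogeneous as well. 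The only care needed is to verify that the ``depends only on $u$'' property holds simultaneously for the weight rows and for the matrix rows, so that this descent-and-lift is valid for an \emph{arbitrary} compression; once that is checked, the two rowspan containments complete the proof.
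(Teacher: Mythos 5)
Your proposal is correct and follows essentially the same route as the paper: both reduce multihomogeneity to the containment $\ker(\bar{A})\subseteq\ker(\bar{D})$, i.e.\ to the rows of the weight matrix lying in the rowspan of the parametrizing matrix, establish this for $A^{\ell+1}$ by summing the rows of a floret and for $A^{1,\dots,\ell}$ by invoking condition (3) of the GRIP, and then transfer the relations to an arbitrary compression by noting that deleting (or duplicating) columns uniformly preserves linear relations among rows. Your detour through the distinct-column matrix $\hat{A}$ is just a two-step version of the paper's one-step deletion of the same columns from $D$ and $A^{1,\dots,\ell}$, and your observation that the $C$-side needs only the floret condition is consistent with the paper's argument.
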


\begin{proof}
Let $D = B \Cap C$.
The rows of $D$ are in the rowspan of $A^{1,\dots,\ell}$ by the generalized running intersection property.
Moreover, the row of $D$ corresponding to a floret $\F$ in $C$ is $\displaystyle\sum_{ \alpha^C_v \in \F} \alpha^C_v$.
Hence, the rows of $D$ are in the rowspan of $A^{\ell+1}$ as well.

The matrix $\bar{A}^{1,\dots,\ell}$ is obtained by deleting some columns of $A^{1,\dots,\ell}$.
Let $\bar{D}$ be obtained from $D = B \Cap C$ by deleting the same columns.
Then the matrix $\bar{D}$ defines the multigrading $\deg(q_{u,s'}^{t(u,\bullet)}) = e_{t(u,\bullet)}$ on $R_B$.
The rows of $\bar{D}$ remain in the rowspan of $\bar{A}^{1,\dots,\ell}$.
Thus $I(\bar{A}^{1,\dots,\ell})$ is multihomogeneous with respect to the grading given by $\bar{D}$
since $\ker(\bar{A}^{1,\dots,\ell}) \subset \ker(\bar{D})$.
The argument for the linear ideal $I(\bar{A}^{\ell+1})$ is analogous.
\end{proof}

\begin{theorem}\label{thm:IdealEqualsTFP}
Let $A^{1,\dots,\ell+1}$ be a \multipart matrix that satisfies the generalized running intersection property. Then $I(A^{1,\dots,\ell+1})$ can be obtained via a toric fiber product. In particular this toric fiber product is given by $\ker(\psi)$, defined in \eqref{eq:TFP}.
\end{theorem}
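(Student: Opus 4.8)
The plan is to check two things: that the homomorphism $\psi$ of \eqref{eq:TFP} is literally the toric fiber product homomorphism $\psi_{I,J}$ of Section \ref{ssec:TFP} for $I = I(\tilde{A}^{1,\dots,\ell})$, $J = I(\tilde{A}^{\ell+1})$ and multigrading $D = B \Cap C$; and, as the real content, that $\ker(\psi) = I(A^{1,\dots,\ell+1})$. The first point then gives $\ker(\psi) = I(\tilde{A}^{1,\dots,\ell}) \times_D I(\tilde{A}^{\ell+1})$ by definition, and the second identifies this ideal with $I(A^{1,\dots,\ell+1})$.

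First I would match $\psi$ to the defining map of the toric fiber product. Lemma \ref{lem:NumberTheory}, which uses that $B_\ell A^{\ell+1}$ is well-connected (GRIP condition (1)), produces integers $x_u, y_v$ with $\omega_{uv} = x_u y_v$ whenever $\omega_{uv} \neq 0$; writing $s = s' y_v + s''$ with $s' \in \{0,\dots,x_u-1\}$ and $s'' \in \{0,\dots,y_v-1\}$ then gives a bijection $(u,v,s) \leftrightarrow ((u,s'),(v,s''))$, so that $p^{t(u,v)}_{u,v,s}$ plays the role of the variable $z^t_{jk}$ with $j = (u,s')$ and $k = (v,s'')$. Since $p^{t(u,v)}_{u,v,s}$ only occurs when $\omega_{uv} \neq 0$, the rows $\alpha^B_u$ and $\alpha^C_v$ are connected there, forcing $t(u,\bullet) = t(\bullet,v) = t(u,v) =: t$; thus all three superscripts agree, matching the requirement that $z^t_{jk} \mapsto x^t_j \otimes_{\C} y^t_k$ uses a common superscript. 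Under this identification $\psi$ sends $p^{t(u,v)}_{u,v,s}$ to $q^{t(u,\bullet)}_{u,s'} \otimes_{\C} r^{t(\bullet,v)}_{v,s''}$, which is exactly the defining assignment of the toric fiber product. Proposition \ref{prop:Multihom} (which is where GRIP conditions (2) and (3) enter) supplies the multihomogeneity of $I(\tilde{A}^{1,\dots,\ell})$ and $I(\tilde{A}^{\ell+1})$ for the gradings $\deg(q^{t(u,\bullet)}_{u,s'}) = e_{t(u,\bullet)}$ and $\deg(r^{t(\bullet,v)}_{v,s''}) = e_{t(\bullet,v)}$, so the toric fiber product is well-defined.

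For the equality $\ker(\psi) = I(A^{1,\dots,\ell+1})$, I would pass to the monomial parametrizations. Both $R_B / I(\tilde{A}^{1,\dots,\ell})$ and $R_C / I(\tilde{A}^{\ell+1})$ are toric, hence monomial subrings of polynomial rings in the disjoint parameter sets indexing the rows of $\tilde{A}^{1,\dots,\ell}$ and $\tilde{A}^{\ell+1}$; the canonical isomorphism $\C[a] \otimes_{\C} \C[b] \cong \C[a,b]$ then carries their tensor product isomorphically onto the monomial subring $M \subseteq \C[a,b]$ generated by both families. Composing $\psi$ with this isomorphism $\theta$ turns $\theta \circ \psi$ into an honest monomial map sending $p^{t(u,v)}_{u,v,s} \mapsto m_u \cdot b_v$, where $m_u$ is the monomial of the $u$th distinct column of $A^{1,\dots,\ell}$ (independent of $s'$, as all columns of $\tilde{A}^{1,\dots,\ell}$ labeled $u$ coincide) and $b_v$ is the single parameter attached to row $v$ of $A^{\ell+1}$ (independent of $s''$). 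By the column-indexing scheme preceding Definition \ref{def:GRIP} and the definition of the labeling $\lambda$, column $(u,v,s)$ of $A^{1,\dots,\ell+1}$ is the $u$th distinct column of $A^{1,\dots,\ell}$ stacked over $e_v$, so $m_u \cdot b_v$ is precisely the $(u,v,s)$ column monomial of the parametrization \eqref{eq:AToricMap} of $A^{1,\dots,\ell+1}$. Hence $\theta \circ \psi$ is the monomial map of $A^{1,\dots,\ell+1}$, and since $\theta$ is an isomorphism, $\ker(\psi) = \ker(\theta \circ \psi) = I(A^{1,\dots,\ell+1})$.

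I expect the main obstacle to be bookkeeping rather than deep structure. The two places requiring care are justifying that the tensor product is realized as the monomial subring $M \subseteq \C[a,b]$ — so that no extra relations are introduced and $\ker(\psi)$ really equals the toric ideal of the combined monomial map — and tracking the triple-index column labeling so that $\psi(p^{t(u,v)}_{u,v,s})$ is identified with the correct column monomial of $A^{1,\dots,\ell+1}$. The first is handled by the observation that a tensor product over $\C$ of two monomial subrings in disjoint variables is again a monomial subring (in particular a domain) of the joint polynomial ring; the second follows directly from the definitions of $\Cup$, the labeling $\lambda$, and the indexing of the columns of $BC$ by triples $(u,v,s)$.
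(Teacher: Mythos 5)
Your proposal is correct and follows essentially the same route as the paper's proof: invoke Proposition \ref{prop:Multihom} for the multihomogeneity needed to define the toric fiber product, then replace each quotient ring by its monomial parametrization so that $\psi$ becomes precisely the monomial map $\phi_{A^{1,\dots,\ell+1}}$, whence $\ker(\psi) = I(A^{1,\dots,\ell+1})$. You supply more of the bookkeeping (the $(u,v,s)\leftrightarrow((u,s'),(v,s''))$ bijection from Lemma \ref{lem:NumberTheory} and the identification of the tensor product with a monomial subring in disjoint variables) than the paper does, but the argument is the same.
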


\begin{proof} 
By Proposition \ref{prop:Multihom}, $I(\tilde{A}^{1,\dots,\ell})$ and $I(\tilde{A}^{\ell+1})$ are both multihomogeneous with respect to the grading specified by $D$.
For each $h \in [\ell+1]$, let $n_h$ denote the number of rows of $A^h$. 
We can replace each ideal with its parametrization to construct the map $\psi'$,

\begin{alignat*}{3}
R &\rightarrow &\C[\eta^h_g \mid h\in [\ell], g \in [n_h]] \otimes_{\C} \C[\eta^{\ell+1}_g \mid g \in [n_{\ell+1}]] \\
p_{u,v,s}^{t(u,v)} & \mapsto & \prod_{h=1}^{\ell} \eta^h_{\Scal(h,(u,v,s))} \otimes_{\C} \eta^{\ell+1}_{\Scal(\ell+1,(u,v,s))} \\
& & = \prod_{h=1}^{\ell+1} \eta^h_{\Scal(h,(u,v,s))}.
\end{alignat*}
The map $\psi'$ has the same kernel as $\psi$, which is equal to the toric fiber product.
This is exactly the map  $\phi_{A^{1,\dots,\ell+1}}$ whose kernel is $I(A^{1,\dots,\ell+1})$, as needed.
\end{proof}

We are now ready to examine the maximum likelihood estimators of \multipart matrices that satisfy the GRIP.
Recall that the number of copies of column $e_u$ in $B$ is
\[
\sum_{v:\alpha^C_v \in \F^C_{t(u,\bullet)}} \omega_{uv} = x_u \sum_{v:\alpha^C_v\in \F^C_{t(u,\bullet)}}y_v.
\]
For each $u$, let 
$$
\displaystyle Y_u = \sum_{v: \alpha^C_v \in \F^C_{t(u,\bullet)}}y_v.
$$
Then we can index columns of $A^{1,\dots,\ell}$ by $(u,s)$ where $u \in [\beta]$ and $s \in \{0,\dots,x_uY_u - 1\}$.
Let $d$ be an $m$-dimensional normalized data vector for $A^{1,\dots,\ell}$ with this indexing. 
We form an associated data vector $\tilde{d}$ for $\tilde{A}^{1,\dots,\ell}$ by setting
\begin{equation}\label{eqn:CompressedData}
\tilde{d}_{(u,s')} = \sum_{n = 0}^{Y_u-1} d_{(u,s'Y_u + n)}
\end{equation}
for each $u \in [\beta]$ and $s' \in \{0,\dots,x_u-1\}$.
When we compress the matrix $A^{1,\dots,\ell}$ to $\tilde{A}^{1,\dots,\ell}$, we have a single column with label $(u,s')$ representing $Y_u$ copies of an identical column in $A^{1,\dots,\ell}$. In order to form the $(u,s')$ coordinate of $\tilde{d}$, we add the $Y_u$ coordinates of $d$ that are represented by column $(u,s')$ in $\tilde{A}^{1,\dots,\ell}$.
Let $\p_{(u,s)}(d)$ denote the $(u,s)$ component of the MLE for $d$ in  $\Mcal(A^{1,\dots,\ell})$.

\begin{prop}\label{prop:CompressedMLE}
Let $d$ be a data vector for $\Mcal(A^{1,\dots,\ell})$ and let $\tilde{d}$ be as defined in Equation (\ref{eqn:CompressedData}).
Then the MLE for $\tilde{d}$ in $\Mcal(\tilde{A}^{1,\dots,\ell})$ has $(u,s')$ component $Y_u \cdot \p_{(u,0)}(d).$
\end{prop}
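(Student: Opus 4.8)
The plan is to apply Birch's theorem (Proposition \ref{prop:Birch}): since the MLE of a log-linear model is the \emph{unique} point of the model satisfying the linear moment equations, it suffices to exhibit a single distribution $P \in \Mcal(\tilde{A}^{1,\dots,\ell})$ whose $(u,s')$-coordinate is $Y_u\,\p_{(u,0)}(d)$ and which satisfies $\tilde{A}^{1,\dots,\ell}P = \tilde{A}^{1,\dots,\ell}\tilde{d}$; uniqueness then forces $P$ to be the MLE for $\tilde{d}$, which is exactly the claim. Throughout I assume the MLE for $d$ exists, so that $p_u := \p_{(u,0)}(d)$ is well-defined.

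First I would record a reduction principle for repeated columns. Because the column $(u,s)$ of $A^{1,\dots,\ell}$ equals the column $(u,0)$ for every $s$, the condition $\log p \in \rowspan(A^{1,\dots,\ell})$ forces the MLE to take a common value $p_u$ on each block of columns sharing the first index $u$, and the same holds for $\tilde{A}^{1,\dots,\ell}$. Both models are therefore governed by the single reduced $\beta$-column matrix $\bar{A}^{1,\dots,\ell}$ (one representative of each distinct column), and a vector lies in $\rowspan(A^{1,\dots,\ell})$, respectively $\rowspan(\tilde{A}^{1,\dots,\ell})$, precisely when it is constant on $u$-blocks and its reduced version lies in $\rowspan(\bar{A}^{1,\dots,\ell})$.

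Next I would set $P_{(u,s')} := Y_u\,p_u$ and verify membership in $\Mcal(\tilde{A}^{1,\dots,\ell})$. Positivity is clear, and the normalization $\sum_{u} x_u P_u = \sum_u x_u Y_u p_u = \sum_{(u,s)}\p_{(u,s)}(d) = 1$ is immediate, so the crux is $\log P \in \rowspan(\tilde{A}^{1,\dots,\ell})$, which by the reduction principle amounts to $(\log Y_u + \log p_u)_u \in \rowspan(\bar{A}^{1,\dots,\ell})$. The term $(\log p_u)_u$ is there because $\p(d) \in \Mcal(A^{1,\dots,\ell})$. For $(\log Y_u)_u$, the key observation is that $Y_u = \sum_{\alpha^C_v \in \Fcal^C_{t(u,\bullet)}} y_v$ depends only on the floret $t(u,\bullet)$; hence $(\log Y_u)_u$ is constant on florets and is a linear combination of the floret-indicator vectors, i.e. of the rows of $B \Cap C$. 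The third condition of the GRIP (Definition \ref{def:GRIP}) states exactly that these rows lie in $\rowspan(A^{1,\dots,\ell})$, and since they are block-constant their reduced versions lie in $\rowspan(\bar{A}^{1,\dots,\ell})$, giving $(\log Y_u)_u \in \rowspan(\bar{A}^{1,\dots,\ell})$. I expect this to be the main obstacle, as it is the only place the full strength of the GRIP is invoked and it requires care in translating between the $(u,v,s)$, $(u,s)$, and reduced indexing schemes.

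Finally I would check the moment equations. Evaluating on the row of $\tilde{A}^{1,\dots,\ell}$ coming from a reduced row $\bar{\alpha}$ of $\bar{A}^{1,\dots,\ell}$ gives $(\tilde{A}^{1,\dots,\ell}P)_{\bar{\alpha}} = \sum_u x_u\,\bar{\alpha}_u\,Y_u\,p_u$, while the definition of $\tilde{d}$ in \eqref{eqn:CompressedData} telescopes: as $s'$ ranges over $\{0,\dots,x_u-1\}$ and $n$ over $\{0,\dots,y_v-1\}$, the index $s = s'y_v + n$ sweeps out all of $\{0,\dots,\omega_{uv}-1\}$, so summing over $s'$ and over $v$ in the floret collects every $d_{(u,v,s)}$ with $B$-part $e_u$. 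Thus $\sum_{s'}\tilde{d}_{(u,s')}$ equals the total mass $D_u$ of $d$ on the $u$-block and $(\tilde{A}^{1,\dots,\ell}\tilde{d})_{\bar{\alpha}} = \sum_u \bar{\alpha}_u D_u$. But the original Birch equation $A^{1,\dots,\ell}\p = A^{1,\dots,\ell}d$, read on the same reduced row, is precisely $\sum_u \bar{\alpha}_u\,x_u Y_u\,p_u = \sum_u \bar{\alpha}_u D_u$, so the two sides agree and $P$ satisfies the moment equations. Having shown $P \in \Mcal(\tilde{A}^{1,\dots,\ell})$ together with $\tilde{A}^{1,\dots,\ell}P = \tilde{A}^{1,\dots,\ell}\tilde{d}$, Proposition \ref{prop:Birch} identifies $P$ as the MLE for $\tilde{d}$, completing the proof.
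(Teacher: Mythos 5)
Your proposal is correct, and while the moment-equation verification is essentially identical to the paper's (the same telescoping of $s = s'y_v + n$ over $\{0,\dots,\omega_{uv}-1\}$ reducing everything to the original Birch equation $\alpha\cdot\p = \alpha\cdot d$), your verification that the candidate point lies in the model takes a genuinely different and more direct route. The paper works on the ideal side: it decomposes $I(\tilde{A}^{1,\dots,\ell})$ as the linear ideal of column repetitions plus the embedded reduced toric ideal $\tilde{J}$, and then checks that each binomial of $\tilde{J}$ vanishes at $\q$ by invoking multihomogeneity with respect to $D$ (Proposition \ref{prop:Multihom}) to force $Y_{u_n}=Y_{v_n}$ term-by-term so that the $Y$-factors cancel. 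You instead work on the exponential-family side: you write $\log P_{(u,s')} = \log Y_u + \log \p_{(u,0)}(d)$, note that the second summand is in the rowspan because $\p(d)$ lies in $\Mcal(A^{1,\dots,\ell})$, and observe that $Y_u$ depends on $u$ only through its floret, so $(\log Y_u)_u$ is a linear combination of the rows of $B\Cap C$, which GRIP condition (3) places in the rowspan. Both arguments hinge on GRIP(3) at exactly the same point, but yours bypasses the ideal decomposition and Proposition \ref{prop:Multihom} entirely, which makes the role of the floret-constancy of $Y_u$ more transparent; the paper's version has the advantage of staying within the toric-ideal formalism it needs anyway for the toric fiber product machinery. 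One small point to keep explicit in a final write-up: concluding from the Birch equations that $P$ \emph{is} the MLE requires noting that a strictly positive solution in the model certifies existence of the MLE as well as identifying it, but the paper's own proof elides this in the same way, so it is not a gap relative to the intended standard.
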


\begin{proof}
Let $\q$ be the $(\sum x_u)$-dimensional vector with $(u,s')$ component equal to \\ $Y_u\cdot \p_{(u,0)}(d)$ for each $u \in [\beta]$ and $s' \in \{0,\dots,x_u-1\}$. 
We show that $\q$ satisfies the conditions of Proposition \ref{prop:Birch}.

First, we show that $\tilde{A}^{1,\dots,\ell} \q = \tilde{A}^{1,\dots,\ell} \tilde{d}$. 
Let $\tilde{\alpha}$ be a row of $\tilde{A}^{1,\dots,\ell}$ and let $\alpha$ be the corresponding row in $A^{1,\dots,\ell}$.
Then we have
\[
\tilde{\alpha} \cdot \q = \sum_{(u,s') \in \supp(\tilde{\alpha})} Y_u\cdot \p_{(u,0)} (d).
\]
Note that if $(u,s') \in \supp(\tilde{\alpha})$ for some $s'$, then we have $(u,h) \in \supp(\tilde{\alpha})$ for all $h$ in $\{0,\dots,x_u -1\}$.
Moreover, by Proposition \ref{prop:Birch} applied to $\Mcal(A^{1,\dots,\ell})$, $\p_{(u,s)} = \p_{(u,0)}$ for all $s \in \{0,\dots,x_uY_u -1\}$.
Thus
\begin{align*}
\tilde{\alpha} \cdot \q &= \sum_{u: (u,0) \in \supp(\tilde{\alpha})} x_uY_u\cdot \p_{(u,0)}(d) \\
&= \sum_{u: (u,0) \in \supp(\tilde{\alpha})} \sum_{n=0}^{x_u Y_u -1} \p_{(u,n)}(d) \\
&= \alpha \cdot \p.
\end{align*}
Similarly, we have that
\begin{align*}
\tilde{\alpha} \cdot \tilde{d} &= \sum_{(u,s') \in \supp(\alpha)} \tilde{d}_{(u,s')} \\
&= \sum_{(u,s') \in \supp(\alpha)} \big( \sum_{n=0}^{x_u-1} d_{(u,s'Y_u +n)} \big) \\
&= \sum_{u:(u,0) \in \supp(\alpha)} \sum_{n=0}^{x_uY_u} d_{(u,n)} \\
&= \sum_{(u,n) \in \supp(\alpha)} d_{(u,n)} \\
&= \alpha \cdot d
\end{align*}
By Proposition \ref{prop:Birch} applied to $A^{1,\dots,\ell}$, we have $\alpha \cdot \p = \alpha \cdot d$.
Hence $\tilde{\alpha} \cdot \q = \tilde{\alpha} \cdot \tilde{d}$,
as needed.

Now we must argue that $\q$ lies in $I(\tilde{A}^{1,\dots,\ell})$. 
Let $\bar{A}^{1,\dots,\ell}$ denote the matrix obtained from $A^{1,\dots,\ell}$ by removing all repeated columns.
The columns of $\bar{A}^{1,\dots,\ell}$ are indexed by $u \in [\beta]$.
Let $J$ denote the inclusion of $I(\bar{A}^{1,\dots,\ell})$ into the polynomial ring,
\begin{align*}
\C[p_{(u,s)} \mid u \in [\beta], s \in \{0,\dots,x_uY_u -1\}]
\end{align*}
obtained
by mapping $p_u$ to $p_{(u,0)}$.
Then by definition of the toric ideal associated to a matrix, we have that
\[
I(A^{1,\dots,\ell}) = \langle p_{(u,s)} - p_{(u,r)} \mid u \in [\beta], s,r \in \{0,\dots,x_uY_u -1\} \rangle + J.
\]
Similarly, let $\tilde{J}$ denote the inclusion of $I(\bar{A}^{1,\dots,\ell})$ into $R_B$ obtained by mapping $p_u$ to $q_{(u,0)}$.
Then we have that 
\[
I(\tilde{A}^{1,\dots,\ell}) = \langle q_{(u,s)} - q_{(u,r)} \mid u \in [\beta], s,r \in \{0,\dots,x_u-1\} \rangle + \tilde{J}.
\]
Thus we must check that $\q$ lies in each of the summands of $I(\tilde{A}^{1,\dots,\ell})$.

First, $\q_{(u,s)} = \q_{(u,r)}$ for all $u\in[\beta]$ and $s,r \in \{0,\dots,x_u-1\}$ by construction.
Next, let
\[
\tilde{f} = \prod_{n=1}^N q_{(u_n,0)} - \prod_{n=1}^N q_{(v_n,0)}
\]
lie in $\tilde{J}$ and $f = \prod_{n=1}^N p_{(u_n,0)} - \prod_{n=1}^N p_{(v_n,0)}$ be the corresponding element of $J$ where $N=\deg(\tilde{f})=\deg(f)$.
By \ref{prop:Multihom}, $\tilde{J}$ is multihomogeneous with respect to the multigrading specified by $D$.
Since the columns of $D$ are linearly independent, we may assume that for each $n \in [N]$,
$\alpha_{u_n}^B$ and $\alpha_{v_n}^B$ lie in the same floret of $B$ with respect to $A^{\ell+1}$.
Thus $Y_{u_n} = Y_{v_n}$ for all $n$.
Evaluating $\tilde{f}$ at $\q$ yields
\begin{align*}
\tilde{f}(\q) = \prod_{n=1}^N \q_{(u_n,0)} - \prod_{n=1}^N \q_{(v_n,0)}  &=  \prod_{n=1}^N Y_{u_n}\cdot\p_{(u_n,0)} - \prod_{n=1}^N  Y_{v_n}\cdot \p_{(v_n,0)} \\
&= \big(\prod_{n=1}^N Y_{u_n} \big) \big( \prod_{n=1}^N \p_{(u_n,0)} - \prod_{n=1}^N \p_{(v_n,0)}\big) = 0,
\end{align*}
 since $\p \in V(J)$. 
 Thus $\q$ lies in the closure of $\Mcal(\tilde{A}^{1,\dots,\ell})$.
  So by Proposition \ref{prop:Birch} , $\q$ is the MLE for $\tilde{d}$ in $\Mcal(\tilde{A}^{1,\dots,\ell})$.
\end{proof}

Let $A$ be a \multipart matrix with $k$ partitions that satisfies the GRIP. The $\ell$th level \textbf{florets} of the matrix $A$ are given by the florets of $A^{\ell+1}$ (as defined in Definition \ref{def:MatrixFloret}) in the pair of matrices $B_\ell=\Cup_{n=1}^\ell A^n$ and $A^{\ell+1}$, with the convention that the 1st level florets are given by the rows of $A^1$. We denote the $\ell$th level floret containing the row $\alpha_i^\ell$ as $\Fcal^\ell[i]$; thus the $\ell$th level floret corresponding to the $j$th column of $A$ is $\Fcal^\ell[\Scal(\ell,j)]$.

\begin{remark}\label{rem:MatrixFloretFull}
There is a clear relationship between the notation described above and that used for florets of rows up until this point. In particular for the pair of matrices $B_\ell$ and $A^{\ell+1}$ we have that $\Fcal^{\ell+1}[\Scal(\ell+1,j)] = \Fcal^{A^{\ell+1}}_{t(\bullet,\Scal(\ell+1,j))}$ for $1\leq \ell \leq k-1$.  We make this distinction in order to avoid dependence on the matrix $B$ produced at each step. As the rows of $A$ correspond to the labels of the edges in the stratified, staged tree $\T_A$, the florets of $A$ correspond exactly to the florets of $\T_A$ (see Remark \ref{rem:stagedtreeMLE}).
\end{remark}

\begin{cor}\label{cor:GRIPMLE}
Let $A$ be a \multipart matrix with $k$ partitions that satisfies the GRIP. Then the MLE $\p$ of $d$ has as its $j$th coordinate function:
\begin{equation}\label{eq:GRIPMLE}
\p_{j} = \frac{1}{c_{j}} \left( \prod\limits_{\ell=1}^{k} \dfrac{\alpha_{\mathcal{S}(\ell,j)}^{\ell}(d)}{\sum\limits_{\alpha_i^{\ell} \in \mathcal{F}^{\ell}[\Scal(\ell,j)]} \alpha_{i}^{\ell}(d) } \right).
\end{equation}
\end{cor}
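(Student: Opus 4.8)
The plan is to induct on the number of partitions $k$, using the toric fiber product structure to peel off one level at a time. For the base case $k=1$, the matrix $A=A^1$ is a single partition matrix, so each column lies in the support of exactly one row and $\sum_i \alpha^1_i = \mathbf{1}$. Birch's theorem (Proposition \ref{prop:Birch}) then forces the MLE to be constant on each index set $I^1_i$ with $\alpha^1_i(d) = |I^1_i|\,\p_j$ for every $j \in I^1_i$. Since by convention the single first-level floret consists of all rows of $A^1$, the denominator in \eqref{eq:GRIPMLE} is $\sum_{\alpha^1_i \in \Fcal^1[\Scal(1,j)]}\alpha^1_i(d) = \sum_i \alpha^1_i(d) = \mathbf{1}\cdot d = 1$, while $c_j = C^1_{\Scal(1,j)} = |I^1_{\Scal(1,j)}|$; hence \eqref{eq:GRIPMLE} collapses to $\p_j = \alpha^1_{\Scal(1,j)}(d)/|I^1_{\Scal(1,j)}|$, exactly as required.

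For the inductive step I would assume \eqref{eq:GRIPMLE} for every GRIP matrix with $\ell$ partitions and take $A^{1,\dots,\ell+1}$ satisfying the GRIP. Its truncation $A^{1,\dots,\ell}$ again satisfies the GRIP, since the defining conditions for indices $1,\dots,\ell-1$ involve only the first $\ell$ partitions. Setting $B = \Cup_{n=1}^\ell A^n$, $C = A^{\ell+1}$, and indexing columns by triples $(u,v,s)$ as in the text, Theorem \ref{thm:IdealEqualsTFP} writes $I(A^{1,\dots,\ell+1})$ as the toric fiber product of $I(\tilde A^{1,\dots,\ell})$ and $I(\tilde A^{\ell+1})$ along $D = B \Cap C$. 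I then apply the toric fiber product MLE formula (Theorem 5.5 of \cite{AKK20}) to obtain, for the column $(u,v,s)$ with $s = s'y_v + s''$ and $t = t(u,v)$,
\[
\p_{(u,v,s)} = \frac{\hat p^{\,t}_{(u,s')}(\tilde A^{1,\dots,\ell})\,\hat p^{\,t}_{(v,s'')}(\tilde A^{\ell+1})}{\hat p^{\,t}(D)}.
\]

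The heart of the proof is evaluating the three factors. For the first, the marginal of $d$ over the second index is precisely the compressed data vector $\tilde d$ of \eqref{eqn:CompressedData}, so Proposition \ref{prop:CompressedMLE} gives $\hat p^{\,t}_{(u,s')}(\tilde A^{1,\dots,\ell}) = Y_u\,\p_{(u,0)}(d)$, and the inductive hypothesis applied to $A^{1,\dots,\ell}$ expands $\p_{(u,0)}(d)$ over the first $\ell$ levels with weight $c^{(\ell)}_u = x_uY_u$. The second factor is the MLE of the single-partition model $\tilde A^{\ell+1}$; a Birch computation as in the base case gives $\alpha^{\ell+1}_v(d)/y_v$, since the row-$v$ marginal of the $C$-factor data recovers $\alpha^{\ell+1}_v(d)$. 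The denominator $\hat p^{\,t}(D)$ is the mass of floret $t$, equal to $\sum_{\alpha^{\ell+1}_i \in \Fcal^{\ell+1}[v]}\alpha^{\ell+1}_i(d)$. Substituting, the factors of $Y_u$ cancel and one is left with a product over all $\ell+1$ levels divided by $x_uy_v$.

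The final and most delicate step is the weight bookkeeping, which is where well-connectedness and the floret condition are genuinely used. I would verify that $c_{(u,v,s)} = \omega_{uv} = x_uy_v$: the column $(u,v,s)$ is repeated exactly $\omega_{uv}$ times in $A^{1,\dots,\ell+1}$, and Lemma \ref{lem:NumberTheory} factors $\omega_{uv} = x_uy_v$. Matching this against Remark \ref{Rem:ConnectionRatio} forces $C^{\ell+1}_v = y_v/Y_u$, which in turn requires $Y_u$ to be constant across each floret; this holds because all rows of $B$ in a given floret are connected to exactly the same set of rows of $C$, so $Y_u = \sum_{\alpha^C_{v'} \in \Fcal^C_t} y_{v'}$ depends only on $t$. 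I expect this reconciliation—together with the index matching between $\Scal(h,(u,v,s))$, $\Scal(h,u)$ and the florets $\Fcal^h[\cdot]$ across the two indexing schemes—to be the main obstacle. Once $c_{(u,v,s)} = x_uy_v$ is in hand, the assembled expression is exactly \eqref{eq:GRIPMLE} for $k=\ell+1$, completing the induction.
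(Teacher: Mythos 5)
Your proposal is correct and follows essentially the same route as the paper: induction on $k$ with the base case handled by Birch's theorem, and the inductive step peeling off the last partition via the toric fiber product structure of Theorem \ref{thm:IdealEqualsTFP}, the MLE formula for toric fiber products from \cite{AKK20}, and Proposition \ref{prop:CompressedMLE} for the compressed factor, followed by the same weight bookkeeping identifying $c_{(u,v,s)} = \omega_{uv} = x_u y_v$. The consistency checks you flag (that $Y_u$ is constant on florets and that the connection ratios reconcile with Remark \ref{Rem:ConnectionRatio}) are sound and match what the paper does implicitly.
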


\begin{proof}
We induct on $k$. First, let $k= 1$. Then 
$I(A^1) = \langle p_j - p_{j'} \mid \lambda(j) = \lambda(j') \rangle.$
Thus, it is straightforward to check that
$\p_j = \frac{1}{c_j} \sum_{i \in I^1_{\Scal(1,j)}} d_i = \frac{1}{c_j} \alpha^1_{\Scal(1,j)} \cdot d$,
as needed.

Now suppose that the result holds for all $\ell \leq K$ for some natural number $K\geq 1$. Consider a patrition matrix $A^{1,\hdots, K+1}$ that satisfies the GRIP; then by definition of the GRIP so does $A^{1,\hdots, K}$. Now let $B = \Cup_{\ell=1}^{K} A^\ell$, $C=A^{K+1}$, and $D = B\Cap C$. By the GRIP, $BC$ satisfies the floret condition. By Lemma \ref{lem:NumberTheory},
for each $u \in [\beta]$ and $v \in [\gamma]$, there exist $x_u, y_v \in \Z_+$ such that
\[
\omega_{uv} = \begin{cases}
x_u y_v & \text{ if } t(u,\bullet) = t(\bullet,v) \\
0 & \text{ otherwise.}
\end{cases}
\]

Let $\tilde{A}^{K+1}$ have columns indexed by $(v,s'')$ where $s'' \in \{0,\dots,y_v-1\}$. 
The $(v,s'')$ column of $\tilde{A}^{K+1}$ is equal to the $v$th standard basis vector in $\Z^{\gamma}$.
For each $v \in [\gamma]$, let 
\[X_v = \sum_{u:\alpha^B_u \in \F^B_{t(\bullet,v)}} x_u
\]
so that $\sum_{u: \alpha^B_u \in \F^B_{t(\bullet,v)}} \omega_{uv} = X_v y_v$. 
Let $d^C$ be the vector of data with entries indexed by $(v,s'')$ such that
\[
d^C_{(v,s'')} = \sum_{u: \alpha^B_u \in \F^B_{t(\bullet,v)}} \sum_{n=0}^{x_u-1} d_{(u,v,s''x_u +n)}.
\]

Finally, $\tilde{D}$ be the $f \times f$ identity matrix
where $f$ is the number of distinct florets of $BC$.
Let $d^D$ be the vector of data indexed by the distinct florets of $BC$
with $t$th component
\[
d^D_t = \sum_{u,v : t(u,v) = t} \sum_{s = 0}^{\omega_{uv}-1} d_{(u,v,s)}.
\]

Let $\left(\p\right)^B$ denote the MLE for $\tilde{d}$ in $\Mcal(\tilde{A}^{1,\dots,K})$, $\left(\p\right)^C$ denote the MLE for $d^C$ in $\Mcal(\tilde{A}^{K+1})$ and $\left(\p\right)^D$ denote the MLE for $d^D$ in $\Mcal(\tilde{D})$.
By Theorem \ref{thm:IdealEqualsTFP}, $I(A^{1,\dots,K+1}) = I(\tilde{A}^{1,\dots,K}) \times_D I(\tilde{A}^{K + 1})$.
So by Theorem \ref{thm:AKK20}, we have that
\[
\p_{(u,v,s)}(d) = \frac{\left(\p\right)^B_{(u,s')} \left(\p\right)^C_{(v,s'')}}{\left(\p\right)^D_{t(u,v)}},
\]
where $s'$ and $s''$ are such that $s = s'y_v + s''$.
We compute $\left(\p\right)^B, \left(\p\right)^C$ and $\left(\p\right)^D$ to prove the desired result. Fix $u \in [\beta]$ and $v \in [\gamma]$. By induction and Proposition \ref{prop:CompressedMLE},
we have that

\begin{align*}
\left(\p\right)^B_{(u,s')} = Y_u\cdot \p_{(u,v,0)} &=  \frac{Y_u}{x_uY_u} \left( \prod\limits_{\ell=1}^{K} \dfrac{\alpha_{\Scal(\ell,(u,v,0))}^{\ell}(d)}{\sum\limits_{\alpha_{u'}^{\ell} \in \mathcal{F}^{\ell}[\Scal(\ell,(u,v,0))]} \alpha_{u'}^{\ell}(d) } \right) \\
&= \frac{1}{x_u}\left( \prod\limits_{\ell=1}^{K} \dfrac{\alpha_{\Scal(\ell,(u,v,s))}^{\ell}(d)}{\sum\limits_{\alpha_{u'}^{\ell} \in \mathcal{F}^{\ell}[\Scal(\ell,(u,v,s))]} \alpha_{u'}^{\ell}(d) } \right), 
\end{align*}
since $\Scal(\ell,(u,v,s)) = \Scal(\ell,(u,v,0))$ for all $s$. 
Since $\bar{A}^{K+1}$ is an identity matrix with repeated columns, we have that
\begin{align*}
\left(\p\right)^C_{(v,s'')} = \frac{1}{y_v} \sum_{n = 0}^{y_v -1} d^C_{(v,n)} &=
 \frac{1}{y_v} \sum_{u: \alpha^B_u \in \F^B_{t(\bullet,v)}} \sum_{n=0}^{\omega_{uv}} d_{(u,v,n)} \\
 &= \frac{1}{y_v} (\alpha_v^{K+1} d) \\
&= \frac{1}{y_v} \left(\alpha_{\Scal(K+1,(u,v,s))}^{K+1} d\right).
\end{align*}

Let $t = t(u,v)$. Finally, since $\bar{D}$ is an identity matrix, we have that
\begin{align*}
\left(\p\right)^D_t = d^D_t 
&= \sum_{u',v' : t(u',\bullet) = t(\bullet,v') = t } \sum_{s = 0}^{\omega_{u'v'}-1} d_{(u',v',s)} \\
&= \sum\limits_{\alpha_{v'}^{K+1} \in \mathcal{F}^{K+1}[\Scal(\ell,(u,v,s))]} \alpha_{v'}^{K+1} d. 
\end{align*}

Thus by Theorem 5.5 of \cite{AKK20}, the $(u,v,s)$ coordinate of the MLE for $u$ in $\Mcal(A^{1,\dots,K+1})$ is
\begin{align*}
\p_{(u,v,s)}(d) &= \frac{1}{x_u}\left( \prod\limits_{\ell=1}^{K} \dfrac{\alpha_{\Scal(\ell,(u,v,s))}^{\ell}(d)}{\sum\limits_{\alpha_{u'}^{\ell} \in \mathcal{F}^{\ell}[\Scal(\ell,(u,v,s))]} \alpha_{u'}^{\ell}(d) } \right) \\
& \qquad \qquad \times \frac{1}{y_v} (\alpha_{\Scal(K+1,(u,v,s))}^{K+1} d) \times \frac{1}{\sum\limits_{\alpha_{v'}^{K+1} \in \mathcal{F}^{K+1}[\Scal(\ell,(u,v,s))]} \alpha_{v'}^{K+1} (d)} \\
&= \frac{1}{x_u y_v} \left( \prod\limits_{\ell=1}^{K+1} \dfrac{\alpha_{\Scal(\ell,(u,v,s))}^{\ell}(d)}{\sum\limits_{\alpha_{u'}^{\ell} \in \mathcal{F}^{\ell}[\Scal(\ell,(u,v,s))]} \alpha_{u'}^{\ell}(d) } \right) \\
&= \frac{1}{\omega_{uv}}\left( \prod\limits_{\ell=1}^{K+1} \dfrac{\alpha_{\Scal(\ell,(u,v,s))}^{\ell}(d)}{\sum\limits_{\alpha_{u'}^{\ell} \in \mathcal{F}^{\ell}[\Scal(\ell,(u,v,s))]} \alpha_{u'}^{\ell}(d) } \right).
\end{align*}
This proves the result for $A^{1,\hdots,K+1}$ and we are done by induction.
\end{proof}

We now define an iterated toric fiber product of linear ideals.
These are defined so that if $I(A^{1,\dots,k})$ can be constructed as an iterated toric fiber product of models corresponding to partition matrices,
then $A^{1,\dots,k}$ satisfies the GRIP.

\begin{definition}\label{def:IteratedTFP}
Let $A^1,\dots,A^k$ be partition matrices such that $A^{1,\dots,\ell}$ and $A^{\ell+1}$ satisfy the floret condition for each $\ell \in [k-1]$.
Let $D_{\ell} = \big( \Cup_{n=1}^{\ell} A^n \big) \Cap A^{\ell+1}$ for each $\ell \in [k-1]$.
Suppose there exist compressions $\tilde{A}^{1,\dots,\ell}$ and $\tilde{A}^{\ell+1}$ 
of $A^{1,\dots,\ell}$ and $A^{\ell+1}$, respectively, such that $I(\tilde{A}^{1,\dots,\ell}) \times_{D_{\ell}} I(\tilde{A}^{\ell+1})$
is defined and equal to $I(A^{1,\dots,\ell+1})$
for each $\ell \in [k-1]$. Then the ideal $I$ is an \textbf{iterated toric fiber product of linear ideals}.
\end{definition}

\begin{prop}\label{thm:TFP2GRIP}
Let $A^{1,\dots,k}$ be a \multipart matrix such that $I(A^{1\dots,k})$ is an iterated toric fiber product of linear ideals.
Then $A^{1,\dots,k}$ is well-connected and satisfies the GRIP.
\end{prop}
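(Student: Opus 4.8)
The plan is to check, for each $\ell \in [k-1]$, the three conditions of Definition \ref{def:GRIP} directly from the hypothesis that $I(A^{1,\dots,\ell+1}) = I(\tilde A^{1,\dots,\ell}) \times_{D_\ell} I(\tilde A^{\ell+1})$ is a well-defined toric fiber product, where $B_\ell = \Cup_{n=1}^{\ell} A^n$ and $D_\ell = B_\ell \Cap A^{\ell+1}$, and the compressions repeat the $u$th column class of $A^{1,\dots,\ell}$ some $x_u \geq 1$ times and the vector $e_v$ some $y_v \geq 1$ times. Recall that by Remark \ref{rem:EquivGRIP} condition (1) holding at every level is the same as $A^{1,\dots,k}$ being well-connected, so proving the GRIP already yields the ``well-connected'' half of the statement.

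Conditions (2) and (3) I expect to be essentially immediate. For (2), the operation $\Cap$, and hence the matrix $D_\ell$, is only defined when $B_\ell$ and $A^{\ell+1}$ satisfy the floret condition; this is built into the assumption that the iterated toric fiber product is defined, and by Remark \ref{rem:EquivGRIP} it is equivalent to $\T_{A^{1,\dots,k}}$ being a staged tree. For (3), I would use that a toric fiber product $\times_{D_\ell}$ is only defined when both factors are homogeneous with respect to the multigrading specified by $D_\ell$. Applied to $I(\tilde A^{1,\dots,\ell})$, homogeneity says that every binomial $q^a - q^b$ in this ideal satisfies $\bar D_\ell a = \bar D_\ell b$, where $\bar D_\ell$ is $D_\ell$ restricted to the columns indexing $\tilde A^{1,\dots,\ell}$. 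Since these binomials are exactly the $q^a - q^b$ with $\tilde A^{1,\dots,\ell}(a-b)=0$, this is the inclusion $\ker(\tilde A^{1,\dots,\ell}) \subseteq \ker(\bar D_\ell)$, equivalently that the rows of $\bar D_\ell$ lie in $\rowspan(\tilde A^{1,\dots,\ell})$. Passing to distinct columns (repeating a column changes neither side of the containment), the rows of $D_\ell$ lie in $\rowspan(A^{1,\dots,\ell})$, which is condition (3). This is exactly the converse of the implication proved in Proposition \ref{prop:Multihom}.

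The substantive step, and the one I expect to be the real obstacle, is condition (1), well-connectedness. The first task is to show that the ideal equality forces the factorization $\omega_{uv} = x_u y_v$ on every connected pair $(u,v)$. The toric fiber product lives in a polynomial ring with one variable for each triple $\big((u,s'),(v,s'')\big)$ with $s' \in \{0,\dots,x_u-1\}$ and $s'' \in \{0,\dots,y_v-1\}$, hence with $x_u y_v$ variables of column type $(u,v)$, whereas $I(A^{1,\dots,\ell+1})$ has exactly $\omega_{uv}$ such variables; for the two ideals to coincide their variable sets must be in type-preserving bijection, which is precisely the reindexing $s = s' y_v + s''$ of Theorem \ref{thm:IdealEqualsTFP} and forces $\omega_{uv} = x_u y_v$ with $x_u, y_v \geq 1$.

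With the factorization in hand, I would verify condition (1) by computing the connection ratio of Definition \ref{def:wellconnected} for a row $\alpha^{\ell+1}_v$ of $A^{\ell+1}$ in the two-partition matrix $B_\ell A^{\ell+1}$. Its support consists of the columns of type $(u,v)$ with $u$ ranging over the floret $\F^B_{t(\bullet,v)}$; each such column has weight $\omega_{uv}$ in $B_\ell A^{\ell+1}$ and weight $\sum_{v'} \omega_{uv'}$ in $B_\ell$, the total number of columns in class $u$. Substituting the factorization gives
\[
\frac{\omega_{uv}}{\sum_{v'}\omega_{uv'}} = \frac{x_u y_v}{x_u \sum_{\alpha^C_{v'} \in \F^C_{t(u,\bullet)}} y_{v'}} = \frac{y_v}{\sum_{\alpha^C_{v'} \in \F^C_{t(\bullet,v)}} y_{v'}},
\]
where the decisive point is that the factor $x_u$ cancels and, since every $u$ in the support of $\alpha^{\ell+1}_v$ satisfies $t(u,\bullet) = t(\bullet,v)$, the remaining denominator depends only on the floret $\F^C_{t(\bullet,v)}$ and not on $u$. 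Hence the ratio is constant along the support, so $B_\ell A^{\ell+1}$ is well-connected for every $\ell$; by Remark \ref{rem:EquivGRIP} this means $A^{1,\dots,k}$ is well-connected, giving condition (1). Combining conditions (1)--(3) yields the GRIP, which also establishes well-connectedness and completes the argument.
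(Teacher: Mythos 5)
Your proposal is correct and follows essentially the same route as the paper's proof: conditions (2) and (3) are extracted from the well-definedness of $\Cap$ and the multihomogeneity required for the toric fiber product to exist, and condition (1) comes from the forced factorization $\omega_{uv}=x_uy_v$ together with the cancellation of $x_u$ in the connection ratio $\omega_{uv}/\bigl(x_u\sum_{v'}y_{v'}\bigr)=y_v/\sum_{v'}y_{v'}$. The extra detail you supply (the variable-count argument for $\omega_{uv}=x_uy_v$ and the kernel-containment phrasing of condition (3)) only makes explicit what the paper asserts more tersely.
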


\begin{proof}
Fix $\ell \in [k-1]$. Let $B = \Cup_{n=1}^{\ell} A^n$ and let $C = A^{\ell+1}$.
By Definition \ref{def:IteratedTFP}, $BC$ satisfies the floret condition.
Thus, $D_{\ell} = B \Cap C$ is well-defined.
Let $\tilde{A}^{1,\dots,\ell}$ and $\tilde{A}^{\ell+1}$ be such that $I(\tilde{A}^{1,\dots,\ell}) \times_{D_{\ell}} I(\tilde{A}^{\ell+1})$
is defined and equal to $I(A^{1,\dots,\ell+1})$.
Then since the toric fiber product is defined,
both $I(\tilde{A}^{1,\dots,\ell})$ and $I(\tilde{A}^{\ell+1})$ are multihomogeneous with respect to the multigrading
given by $D_{\ell}$. But $A^{1,\dots,\ell}$ is obtained from $\tilde{A}^{1,\dots,\ell}$ by adding repeated columns. Thus, each row of $D_{\ell}$ is contained in the orthogonal complement of the kernel of $A^{1,\dots,\ell}$, which is equal to the rowspan of $A^{1,\dots,\ell}$.

Let $\tilde{A}^{1,\dots,\ell}$ have $\beta$ distinct columns with labels $1,\dots,\beta$. 
Let $x_u$ be the number of copies of column $u$ in $\tilde{A}^{1,\dots,\ell}$.
Let $y_v$ be the number of copies of the $v$th standard basis vector in $\tilde{A}^{\ell+1}$.
Then by the construction of the toric fiber product, there are $\omega_{uv} = x_u y_v$ columns of $A^{1,\dots,\ell+1}$
that consist of column $u$ of $\tilde{A}^{1,\dots,\ell}$ concatenated with $e_v$.

Let $u,u' \in [\beta]$ be such that $\alpha^B_u$ and $\alpha^B_{u'}$ belong to the same floret of $B$.
Let $\alpha^C_v$ belong to the corresponding floret of $C$
and let $\Fcal$ denote the set of all rows of $C$ in the same floret as $\alpha^C_v$.
Then the number of copies of column $u$ in $A^{1,\dots,\ell}$ is $x_u \sum_{v': \alpha^C_{v'} \in \Fcal} y_{v'}$.
Similarly, the number of copies of column $u'$ in $A^{1,\dots,\ell}$ is $x_{u'} \sum_{v': \alpha^C_{v'} \in \Fcal} y_{v'}$.
Thus we have
\[
\omega_{uv} / \big( x_u \sum_{v': \alpha^C_{v'} \in \Fcal} y_{v'} \big) = y_v / \big(\sum_{v': \alpha^C_{v'} \in \Fcal} y_{v'} \big)= \omega_{u'v} / \big(x_{u'} \sum_{v': \alpha^C_{v'} \in \Fcal} y_{v'}\big).
\]
Thus $BC$ is well-connected. The above argument holds for all $\ell \in [k-1]$. Thus $A^{1,\dots,k}$ satisfies the GRIP.
\end{proof}

We now show that for \multipart matrices that satisfy the GRIP, the IPS algorithm produces the MLE in exactly one cycle.
In order to understand this MLE, we must first establish the following equality.

\begin{lemma}\label{lem:GRIPMLE}
If $A$ is a \multipart matrix with $k$ partitions that satisfies the GRIP, then the following equation holds for any row vector $\alpha^k_i$ of $A^k$:
$$
\sum_{j\in I^k_i}\frac{1}{c_j^{k-1}}\left(\prod_{\ell = 1}^{k-1}\frac{\alpha^\ell_{\Scal(\ell,j)}(d)}{\ds \sum_{\alpha^\ell_{i'}\in \mathcal{F}^\ell[\Scal(\ell,j)]} \alpha^\ell_{i'}(d)}\right)= \ds C^k_i \sum_{\alpha^k_{i'} \in \mathcal{F}^k[i]} \alpha^k_{i'}(d).
$$
\end{lemma}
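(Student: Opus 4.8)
The plan is to recognize the left-hand side as an inner product with the maximum likelihood estimate of the \emph{truncated} model $\Mcal(A^{1,\dots,k-1})$, and then to transport a floret-sum row across Birch's equations.

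First I would note that since $A^{1,\dots,k}$ satisfies the GRIP, so does $A^{1,\dots,k-1}$ by Definition \ref{def:GRIP}. Applying Corollary \ref{cor:GRIPMLE} to $A^{1,\dots,k-1}$, its MLE $w$ for the data $d$ has $j$th coordinate
\[
w_j = \frac{1}{c_j^{k-1}}\prod_{\ell=1}^{k-1}\frac{\alpha^\ell_{\Scal(\ell,j)}(d)}{\ds\sum_{\alpha^\ell_{i'}\in\mathcal{F}^\ell[\Scal(\ell,j)]}\alpha^\ell_{i'}(d)},
\]
where the column weights $c_j^{k-1}$ and the lower-level florets $\mathcal{F}^\ell[\cdot]$ (for $\ell\le k-1$) are exactly those appearing in the Lemma. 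Hence the left-hand side of the Lemma is precisely $\sum_{j\in I^k_i}w_j=\alpha^k_i\cdot w$.

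Next I would dispose of the right-hand side. Set $\rho=\sum_{\alpha^k_{i'}\in\mathcal{F}^k[i]}\alpha^k_{i'}$, so the right-hand side equals $C^k_i\,(\rho\cdot d)$. As computed in the proof of Proposition \ref{prop:Multihom}, $\rho$ is exactly the row of $B_{k-1}\Cap A^k$ indexed by the floret $\mathcal{F}^k[i]$; condition (3) of the GRIP then places $\rho$ in $\rowspan(A^{1,\dots,k-1})$. Since $w$ is the MLE for $d$ in $\Mcal(A^{1,\dots,k-1})$, Birch's theorem (Proposition \ref{prop:Birch}) gives $A^{1,\dots,k-1}w=A^{1,\dots,k-1}d$, and therefore $\rho\cdot w=\rho\cdot d$. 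Thus it remains only to prove the ``$w$-side'' identity $\alpha^k_i\cdot w = C^k_i\,(\rho\cdot w)$. To establish this I would group both inner products by the $\beta$ distinct columns of $A^{1,\dots,k-1}$, labelled $u\in[\beta]$, on which $w$ is constant with common value $w_{[u]}$. For the row $\alpha^k_i$ the number of original columns $j\in I^k_i$ carrying label $u$ is $\omega_{ui}$, and Lemma \ref{lem:NumberTheory} gives $\omega_{ui}=x_uy_i$ when $\alpha^B_u$ and $\alpha^k_i$ are connected and $0$ otherwise, so
\[
\alpha^k_i\cdot w=\sum_{u\in[\beta]}\omega_{ui}\,w_{[u]}=y_i\!\!\sum_{\alpha^B_u\in\Fcal^B_{t(\bullet,i)}}\!\! x_u\,w_{[u]}.
\]
Summing the same expression over all rows $\alpha^k_{i'}$ of the floret $\mathcal{F}^k[i]$, which all connect to the same floret $\Fcal^B_{t(\bullet,i)}$ of $B_{k-1}$, yields
\[
\rho\cdot w=\Big(\sum_{\alpha^k_{i'}\in\mathcal{F}^k[i]}y_{i'}\Big)\sum_{\alpha^B_u\in\Fcal^B_{t(\bullet,i)}}x_u\,w_{[u]},
\]
and the common factor cancels, leaving the quotient $y_i/\sum_{\alpha^k_{i'}\in\mathcal{F}^k[i]}y_{i'}$. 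Finally I would identify this with the connection ratio: for $j\in I^k_i$ with label $u$ one has $c^k_j=\omega_{ui}=x_uy_i$ and $c^{k-1}_j=x_uY_u$ with $Y_u=\sum_{\alpha^k_{i'}\in\mathcal{F}^k[i]}y_{i'}$, whence $C^k_i=c^k_j/c^{k-1}_j=y_i/Y_u$, as required.

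The first two steps are clean: Corollary \ref{cor:GRIPMLE} identifies the left side, and Proposition \ref{prop:Multihom} together with Birch's theorem handles the floret-sum. I expect the main obstacle to be the bookkeeping in the final step—correctly counting the column multiplicities $\omega_{ui}$, invoking the multiplicative factorization of Lemma \ref{lem:NumberTheory}, and verifying that the surviving quotient $y_i/\sum_{i'}y_{i'}$ is exactly $C^k_i$. This is precisely where well-connectedness (condition (1) of the GRIP) is indispensable: without it the ratios $c^k_j/c^{k-1}_j$ would not be constant over $j\in I^k_i$, and the connection ratio $C^k_i$ on the right-hand side would not even be well-defined.
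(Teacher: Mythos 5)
Your proof is correct, but it takes a genuinely different route from the paper's. The paper applies Corollary \ref{cor:GRIPMLE} to the \emph{full} $k$-partition model: writing $p$ for its MLE, Birch's theorem gives $\alpha^k_i(p)=\alpha^k_i(d)$, and since $\Scal(k,j)=i$ and $1/c_j=1/(C^k_i c^{k-1}_j)$ for every $j\in I^k_i$, the common factor $\alpha^k_i(d)/\bigl(C^k_i\sum_{\alpha^k_{i'}\in\F^k[i]}\alpha^k_{i'}(d)\bigr)$ is pulled out of the sum and cancelled against the right-hand side $\alpha^k_i(d)$ --- a short algebraic manipulation that never mentions the floret-sum row, Lemma \ref{lem:NumberTheory}, or condition (3) of the GRIP explicitly. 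You instead apply Corollary \ref{cor:GRIPMLE} only to the truncated model $A^{1,\dots,k-1}$, identify the left-hand side as $\alpha^k_i\cdot w$, use GRIP condition (3) together with Birch's theorem to get $\rho\cdot w=\rho\cdot d$ for the floret-sum row $\rho$, and then establish the proportionality $\alpha^k_i\cdot w=C^k_i\,\rho\cdot w$ by the $\omega_{uv}=x_uy_v$ counting. Your route is longer but more structural: it isolates exactly which hypothesis does what (condition (3) supplies $\rho\cdot w=\rho\cdot d$; well-connectedness via Lemma \ref{lem:NumberTheory} supplies the proportionality), and it only needs the MLE formula one level down, so it would permit proving Corollary \ref{cor:GRIPMLE} and this lemma by a single simultaneous induction. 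The paper's route buys brevity at the cost of invoking the $k$-level MLE formula, which is legitimate because that corollary is established independently via the toric fiber product. The bookkeeping in your final step checks out: for $j\in I^k_i$ with label $u=\lambda(j)$ one has $c^k_j=\omega_{ui}=x_uy_i$ and $c^{k-1}_j=x_uY_u$, so $C^k_i=y_i/Y_u$ with $Y_u=\sum_{\alpha^k_{i'}\in\F^k[i]}y_{i'}$, exactly as you claim.
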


\begin{proof}
By Corollary \ref{cor:GRIPMLE}, the MLE of $\Mcal_A$ is given by $p=(p_1,\hdots,p_m)$ where
$$
p_j = \frac{1}{c_{j}^k} \left( \prod\limits_{\ell=0}^{k} \dfrac{\alpha_{\Scal(\ell,j)}^{\ell}(d)}{\sum\limits_{\alpha_i^{\ell} \in \mathcal{F}^{\ell}[\Scal(\ell,j)]} \alpha_{i}^{\ell}(d) } \right),
$$
for $j\in \{1,\hdots,m\}$. Since $p$ is the MLE it also satisfies $Ap=Ad$ by Proposition \ref{prop:Birch}. In particular for any row vector $\alpha^k_i$ we have that
$$
\alpha^k_i(p)=\sum_{j\in I^k_i}\frac{1}{c_j^k}\left(\prod_{\ell = 1}^{k}\frac{\alpha^\ell_{\Scal(\ell,j)}(d)}{\ds \sum_{\alpha^\ell_{i'}\in \mathcal{F}^\ell[\Scal(\ell,i)]} \alpha^\ell_{i'}(d)}\right)= \alpha^k_i(d).
$$
Note that for any $j\in I^k_i$, we have $(1/c_j^k) = (c^{k-1}_j/c^{k-1}_jc_j^k) = (1/C^k_ic^{k-1}_j)$ since $A$ is well-connected. Thus we can pull out a factor of $(1/C^k_i)$ out of each term. Additionally for any $j\in I^k_i$, we have that $\Scal(k,j)=i$ by definition, and hence the middle term of the above equation can be rewritten as
$$
\frac{\alpha^k_i(d)}{C^k_i\ds\sum_{\alpha^k_{i'} \in \mathcal{F}^k[j]} \alpha^k_{i'}(d)}\left(\sum_{j\in I^k_i}\frac{1}{c_j^{k-1}}\left(\prod_{\ell = 1}^{k-1}\frac{\alpha^\ell_{\Scal(\ell,j)}(d)}{\ds \sum_{\alpha^\ell_{i'}\in \mathcal{F}^\ell[\Scal(\ell,i)]} \alpha^\ell_{i'}(d)}\right)\right).
$$
The result immediately follows from this.
\end{proof}

\begin{theorem}\label{thm:GRIPOneCycle}
If $A$ is a \multipart matrix with $k$ partitions that satisfies the GRIP, then the IPS algorithm applied to $A$ results in the MLE after one cycle.
\end{theorem}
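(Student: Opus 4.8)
The plan is to track the iterates $p^{\ell}$ of the IPS algorithm explicitly and prove by induction on $\ell$ that, after the $\ell$th step of the (first) cycle, the $j$th coordinate is exactly the partial product
\[
p^{\ell}_j = \frac{1}{c^{\ell}_j}\prod_{n=1}^{\ell}\frac{\alpha^n_{\Scal(n,j)}(d)}{\sum_{\alpha^n_{i'}\in\Fcal^n[\Scal(n,j)]}\alpha^n_{i'}(d)},
\]
where $c^{\ell}_j$ is the column weight of the truncated matrix $A^{1,\dots,\ell}$. Once this is established, taking $\ell = k$ gives $c^k_j = c_j$ and recovers verbatim the MLE formula of Corollary \ref{cor:GRIPMLE}. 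Since one cycle of IPS consists precisely of the $k$ information projections onto $A^1,\dots,A^k$ in turn, this will show that IPS reaches the MLE after one cycle.

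For the base case $\ell = 1$, I would compute directly from the uniform start $p^0$. Because $p^0$ is constant, $\alpha^1_{\Scal(1,j)}(p^0) = C^1_{\Scal(1,j)}\cdot p^0_j$, so the update $p^1_j = p^0_j\cdot\alpha^1_{\Scal(1,j)}(d)/\alpha^1_{\Scal(1,j)}(p^0)$ collapses to $\alpha^1_{\Scal(1,j)}(d)/C^1_{\Scal(1,j)}$, with the constant cancelling. Since the unique first-level floret consists of all rows of $A^1$ and $d$ is normalized, the corresponding denominator sum $\sum_{\alpha^1_{i'}\in\Fcal^1[\Scal(1,j)]}\alpha^1_{i'}(d)$ equals $1$, and $c^1_j = C^1_{\Scal(1,j)}$, so this matches the claimed formula.

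For the inductive step I would first note that the GRIP passes to the truncation $A^{1,\dots,\ell}$ directly from the iterative form of Definition \ref{def:GRIP}, and that the column weights $c^n_j$ and florets $\Fcal^n[\cdot]$ for $n\le\ell$ are unchanged by this truncation. The crux is to evaluate the normalizing denominator $\alpha^{\ell}_{\Scal(\ell,j)}(p^{\ell-1}) = \sum_{j'\in I^{\ell}_{\Scal(\ell,j)}}p^{\ell-1}_{j'}$ appearing in the step-$\ell$ update. Substituting the inductive hypothesis for each $p^{\ell-1}_{j'}$ turns this sum into precisely the left-hand side of Lemma \ref{lem:GRIPMLE} applied to the matrix $A^{1,\dots,\ell}$ and the row $\alpha^{\ell}_{\Scal(\ell,j)}$, whence the denominator equals $C^{\ell}_{\Scal(\ell,j)}\sum_{\alpha^{\ell}_{i'}\in\Fcal^{\ell}[\Scal(\ell,j)]}\alpha^{\ell}_{i'}(d)$. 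Feeding this back into $p^{\ell}_j = p^{\ell-1}_j\cdot\alpha^{\ell}_{\Scal(\ell,j)}(d)/\alpha^{\ell}_{\Scal(\ell,j)}(p^{\ell-1})$ and using the connection-ratio factorization $c^{\ell}_j = c^{\ell-1}_j\,C^{\ell}_{\Scal(\ell,j)}$ from Remark \ref{Rem:ConnectionRatio} yields the claimed formula with one additional factor, completing the induction.

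The main obstacle is this inductive step, and within it the evaluation of the denominator $\alpha^{\ell}_{\Scal(\ell,j)}(p^{\ell-1})$: this is exactly where Lemma \ref{lem:GRIPMLE} does the real work. The only genuinely delicate points are recognizing that the lemma must be invoked for the truncated matrix $A^{1,\dots,\ell}$ rather than for the full matrix, and verifying that the bookkeeping of column weights through $c^{\ell}_j = c^{\ell-1}_j\,C^{\ell}_{\Scal(\ell,j)}$ remains consistent under truncation. Everything else reduces to routine substitution.
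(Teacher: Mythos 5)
Your proposal is correct and follows essentially the same route as the paper: the paper also inducts (on the number of partitions, which is equivalent to your induction on the step index via the observation that the first $\ell$ steps of IPS on $A$ coincide with one cycle on $A^{1,\dots,\ell}$), identifies $p^{\ell}$ with the MLE of $\Mcal_{A^{1,\dots,\ell}}$ in the form of Corollary \ref{cor:GRIPMLE}, and uses Lemma \ref{lem:GRIPMLE} to evaluate the normalizing denominator $\alpha^{\ell}_{\Scal(\ell,j)}(p^{\ell-1})$ before cancelling via $c^{\ell}_j = c^{\ell-1}_j\,C^{\ell}_{\Scal(\ell,j)}$. The only cosmetic difference is that the paper dispatches the base case by citing that a single information projection onto $L^1$ is already the MLE, rather than computing it coordinatewise as you do.
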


\begin{proof}
We proceed by induction on $k$. For $k=1$, IPS produces the MLE of $\Mcal_A$ after just one step. Indeed the information projection onto the linear family defined by $A$ in this case is simply the MLE of $\Mcal_A$  (see \eqref{eq:InfProj} and Proposition \ref{prop:Birch}). Now assume that for any \multipart matrix $A$ satisfying the GRIP where $k \leq K$ for some natural number $K\geq 1$, the IPS algorithm results in the MLE. By Corollary \ref{cor:GRIPMLE}, the MLE $p^k$ for $\Mcal_{A^{1,\dots,k}}$ is of the form in \eqref{eq:GRIPMLE} in this case.

Now let $A^{1,\hdots,K+1}$ be a \multipart matrix with $K+1$ partitions satisfying the GRIP. Note that based on the definition of the GRIP, $A^{1,\hdots,K}$ also satisfies the GRIP.

Performing the first $K$ steps of the IPS algorithm on $A$ is equivalent to one cycle on $A^{1,\hdots,K}$. Thus we have, by the induction hypothesis and Corollary \ref{cor:GRIPMLE}, that $p^{K}$ is the MLE of $\Mcal_{A^{1,\hdots,K}}$, i.e.
$$
p_j^{K} = \frac{1}{c_j^{K}}\left(\prod_{\ell = 1}^{K}\frac{\alpha^\ell_{\Scal(\ell,j)}(d)}{\ds \sum_{\alpha^\ell_i\in \mathcal{F}^\ell[\Scal(\ell,j)]} \alpha^\ell_{i}(d)}\right).
$$
Then the $(K+1)$st step of IPS scales each $p^{K}_j$ by
$$
\frac{\alpha^{K+1}_{\Scal(K+1,j)}(d)}{\ds \sum_{j'\in I^{K+1}_{\Scal(K+1,j)}}\frac{1}{c_{j'}^{K}}\left(\prod_{\ell = 1}^{K}\frac{\alpha^\ell_{\Scal(\ell,j')}(d)}{\ds \sum_{\alpha^\ell_{i}\in \mathcal{F}^\ell[\Scal(\ell,j')]} \alpha^\ell_{i}(d)}\right)}=\frac{\alpha^{K+1}_{\Scal(K+1,j)}(d)}{\ds C^k_{\Scal(K+1,j)} \sum_{\alpha^{K+1}_{i} \in \mathcal{F}^{K+1}[\Scal(K+1,j)]} \alpha^{K+1}_{i}(d)},
$$
where equality follows from Lemma \ref{lem:GRIPMLE}. Since $C^{K+1}_{\Scal(K+1,j)} = \frac{c_j^{K+1}}{c^{K}_j}$ this implies that $p^{K+1}$ is exactly the MLE of $\Mcal_{A^{1,\hdots,K+1}}$ by Proposition \ref{cor:GRIPMLE}. By induction this proves the result.
\end{proof}

In order to prove one-cycle convergence for models satisfying the GRIP in Theorem \ref{thm:GRIPOneCycle}, we took three important steps, which are similar to the structure of the proof of one-cycle convergence in the case of the RIP in \cite{SH74} Theorem 5.3. There the author first proves  in Lemma 5.8 that the MLE can be written as a normalized product of the MLE for two submodels. We use a generalization of this result from \cite{AKK20} in Lemma \ref{lem:GRIPMLE} to show the structure of the MLE. This corresponds to Theorem 5.1 in \cite{SH74}. Finally, in Theorem \ref{thm:GRIPOneCycle} and Theorem 5.3 in \cite{SH74}, respectively, the now known structure of the MLE is used in the induction to show one-cycle convergence.

Not only does the proof of Theorem \ref{thm:GRIPOneCycle} show that the IPS algorithm constructs the MLE in one cycle, but also that at the $\ell$-th step the vector $p^\ell$ is exactly the MLE of the $A^{1,\hdots,\ell}$. It is natural to ask if the property of producing the MLE in one cycle also characterizes \multipart matrices satisfying the GRIP.
However, we can immediately see that this is not the case in general. Consider a $n\times m$ \multipart matrix $A$ with $k$ partitions such that $A^k$ is the identity matrix $I_m$. Then $\Mcal_A$ is the entire simplex $\Delta_{m-1}$ and IPS automatically produces vector $d$ (the MLE of $\Mcal_A$ in this case) regardless of the first $k-1$ partitions. 

\begin{remark}\label{rem:Horn}
For a \multipart matrix $A$ with $k=2$, it is possible to show that if the IPS algorithm produces the MLE in one cycle, then $A$ satisfies the GRIP. This relies on being able to explicitly write the coordinate functions of the MLE as a product of linear forms that must satisfy the Horn uniformization \cite{DMS19, HS14}. While the above counter-example implies that this is not the case for $k=3$, it would be interesting to investigate whether requiring IPS to produce the MLE of $A^{1,\hdots, \ell}$ at the $\ell$-th step is sufficient to guarantee that $A$ satisfies the GRIP.
\end{remark}

\begin{example}
Here we apply IPS to the matrix $A$ from Example \ref{Ex:Partition}. The rows of $A$ are colored blue in this Example.

Let $d = (d_{1}, \dots, d_{14})$ be the normalized data vector. Projecting to the first partition of $A$ leads to
$p^{0}_{1} = \dots = p^{0}_{7} = \dfrac{1}{7} \, { \color{blue} \alpha^{1}_{1}( d ) }$ and $p^{0}_{8} = \dots = p^{0}_{14} = \dfrac{1}{7} \, { \color{blue} \alpha^{1}_{2}(  d)}.$
The second step of the algorithm results in four different types of indices. These are given by the different rows in the matrix  $A^{1} \Cup A^{2} $, indicated by the different dashed and dotted lines below the matrix.
{\begin{center}
\begin{tikzpicture}
[grow=right, edge from parent/.style={draw,-latex}, line width = 1pt]
 \draw[] (-5.6,0) node{ \footnotesize $A^{1} \Cap A^{2} \  = \, \begin{pmatrix}
 1 & 1 & 1 & 1 & 1 & 1 & 1  & 1 & 1 & 1 & 1 & 1 & 1 &1 \\
 \end{pmatrix} $};
  \draw[] (-5.5,-1.25) node{ \footnotesize $A^{1} \Cup A^{2} = \begin{pmatrix}
 1 & 1 & 1  & \cdot & \cdot & \cdot & \cdot & \cdot & \cdot & \cdot  & \cdot & \cdot & \cdot & \cdot\\
 \cdot & \cdot & \cdot  & 1 & 1 & 1 & 1 & \cdot & \cdot & \cdot & \cdot & \cdot & \cdot & \cdot\\
 \cdot & \cdot & \cdot  & \cdot & \cdot & \cdot & \cdot & 1 & 1 & 1  & \cdot & \cdot & \cdot & \cdot\\
 \cdot & \cdot & \cdot  & \cdot & \cdot & \cdot & \cdot & \cdot & \cdot & \cdot & 1 & 1 & 1 & 1\\
 \end{pmatrix} $};
\draw[ dash pattern=on 6pt off 6pt] (-8.1,-2.15)--(-6.85,-2.15);
\draw[ dash pattern=on 3pt off 3pt] (-6.6,-2.15)--(-4.75,-2.15);
\draw[densely dotted] (-4.5,-2.15)--(-3.25,-2.15);
\draw[loosely dotted] (-2.9,-2.15)--(-1.2,-2.15);
\draw[] (0.5,0) node{ \color{red} \tiny $\alpha^{1 \Cap 2}_{1}$};
\draw[] (0.5,-0.6) node{ \color{darkgray} \tiny $\alpha^{1 \Cup 2}_{1}$};
\draw[] (0.5,-1) node{ \color{darkgray} \tiny $\alpha^{1 \Cup 2}_{2}$};
\draw[] (0.5,-1.4) node{ \color{darkgray} \tiny $\alpha^{1 \Cup 2}_{3}$};
\draw[] (0.5,-1.8) node{ \color{darkgray} \tiny $\alpha^{1 \Cup 2}_{4}$};
\end{tikzpicture}
\end{center}
}
Note that since $A^{1} \Cup A^{2} $ consists only of the ones vector and d is normalized, we have ${ \color{red} \alpha_{1}^{1 \Cap {2}} ( d)  } = 1$. Therefore the second projection simplifies in the following way:

\begin{center}
 
\begin{tikzpicture}[line width = 1pt]
\draw[] (0,0.3) node{ \small $\begin{aligned}
p^{1}_{1} &= p^{1}_{2} = p^{1}_{3} = \quad \quad \quad \,  \frac{1}{C^{1}_{1}} {\color{blue} \alpha^{1}_{1} (d)} \dfrac{ { \color{blue}  \alpha^{2}_{1} ( d) }}{C^{2}_{1} \, { \color{red} \alpha_{1}^{1 \Cap {2}} ( d)  }} =  \frac{1}{3} {  \alpha^{1}_{1} (d) } \alpha^{2}_{1} ( d) \\
p^{1}_{4} &= p^{1}_{5} = p^{1}_{6} = p^{1}_{7} 
= \quad \frac{1}{C^{1}_{1}} {  \color{blue} \alpha^{1}_{1} (d ) } \dfrac{ \color{blue} \alpha^{2}_{2} ( d)}{C^{2}_{2} \, { \color{red} \alpha_{1}^{{1} \Cap {2}} (d) }}   = \frac{1}{4} \alpha^{1}_{1} (d ) \alpha^{2}_{2} ( d)  \\
p^{1}_{8} &= p^{1}_{9} = p^{1}_{10} = \quad \quad \quad \frac{1}{C^{1}_{2}} {\color{blue} \alpha^{1}_{2} ( d) } \dfrac{ \color{blue} \alpha^{2}_{1}( d)}{C^{2}_{1} \, { \color{red} \alpha_{1}^{{1} \Cap {2}} ( d) }} = \frac{1}{3} \alpha^{1}_{2} ( d) \alpha^{2}_{1}( d) \\
p^{1}_{11} &= p^{1}_{12} = p^{1}_{13} = p^{1}_{14} = \frac{1}{C^{1}_{2}} { \color{blue} \alpha^{1}_{2} ( d ) } \dfrac{ \color{blue} \alpha^{2}_{2} ( d)}{C^{2}_{2} \, {\color{red} \alpha_{1}^{{1} \Cap {2}} ( d ) }} =  \frac{1}{4}  \alpha^{1}_{2} ( d ) \alpha^{2}_{2} ( d) \\
\end{aligned}$};
\draw[ dash pattern=on 6pt off 6pt] (-5.5,2.15) -- (-5.5,1.5);
\draw[ dash pattern=on 3pt off 3pt] (-5.5,1.25) -- (-5.5,0.5);
\draw[densely dotted] (-5.5,0.25) -- (-5.5,-0.5);
\draw[loosely dotted] (-5.5,-0.75) -- (-5.5,-1.5);
\end{tikzpicture}
\end{center}

For the last projection we only demonstrate four indices as an example.
{\begin{center}
\begin{tikzpicture}[grow=right, edge from parent/.style={draw,-latex}, line width = 1pt]
   \draw[] (-6,-3.5) node{ \footnotesize $(A^{1} \Cup A^{2}) \Cap A^{3} = \begin{pmatrix}
 1 & 1 & 1  & \cdot & \cdot & \cdot & \cdot & 1 & 1 & 1  & \cdot & \cdot & \cdot & \cdot\\
 \cdot & \cdot & \cdot  & 1 & 1 & 1 & 1 & \cdot & \cdot & \cdot & 1 & 1 & 1 & 1\\
 \end{pmatrix} $};
\draw[] (0.85,-3.2) node{ \color{olive} \tiny $\alpha^{(1 \Cup 2) \Cap 3}_{1}$};
\draw[] (0.85,-3.65) node{ \color{olive} \tiny $\alpha^{(1 \Cup 2) \Cap 3}_{2}$};
 \end{tikzpicture} 
 \end{center}
 }

In this case $A^{2} = (A^{1} \Cup A^{2}) \Cap A^{3}$, hence ${ \color{blue} \alpha^{2}_{i} ( d) } = { \color{olive} \alpha_{i}^{({1} \Cup {2}) \Cap 3} (d)}$. In order to visualize the structure created by IPS, we also display the general formula of $p^{2}_{j}$ without simplifications  

\small 
\begin{align*}
p^{2}_{1} &= \frac{1}{C^{1}_{1}} { \color{blue} \alpha^{1}_{1} ( d ) } \dfrac{{ \color{blue} \alpha^{2}_{1} ( d) }}{C^{2}_{1} \,{\color{red} \alpha_{1}^{{1} \Cap {2}} (d) }} \dfrac{\color{blue} \alpha^{3}_{1} (d)}{C^{3}_{1} {\color{olive} \alpha_{1}^{(1 \Cup {2}) \Cap {3}} ( d) }} =  \frac{1}{7} \alpha^{1}_{1} ( d ) \dfrac{\alpha^{2}_{1} ( d)}{\frac{3}{7} \, \alpha_{1}^{{1} \Cap {2}} (d)} \dfrac{\alpha^{3}_{1} (d)}{\frac{1}{3} \alpha_{1}^{(1 \Cup {2}) \Cap {3}} ( d)} =  \alpha^{1}_{1} ( d )  \alpha^{3}_{1} ( d )\\
p^{2}_{4} &= 
\frac{1}{C^{1}_{1}} {\color{blue} \alpha^{1}_{1} (d ) } \dfrac{\color{blue} \alpha^{2}_{2} ( d)}{C^{2}_{2} \, {\color{red} \alpha_{1}^{{1} \Cap {2}} (d)}}\dfrac{\color{blue} \alpha^{3}_{4}(d)}{C^{3}_{4} { \color{olive} \alpha_{2}^{(1 \Cup 2) \Cap 3} (d)} } = \frac{1}{7} \alpha^{1}_{1} (d ) \dfrac{\alpha^{2}_{2} ( d)}{\frac{4}{7} \, \alpha_{1}^{{1} \Cap {2}} (d)}\dfrac{\alpha^{3}_{4}}{\frac{2}{4}  \alpha_{2}^{(1 \Cup 2) \Cap 3} (d) } = \frac{1}{2} \alpha^{1}_{1}(d) \alpha^{3}_{4}(d) \\
p^{2}_{8} &= 
\frac{1}{C^{1}_{2}} {\color{blue} \alpha^{1}_{2} (d ) } \dfrac{\color{blue} \alpha^{2}_{1} ( d)}{C^{2}_{1} \, {\color{red} \alpha_{1}^{{1} \Cap {2}} (d)}}\dfrac{\color{blue} \alpha^{3}_{1}(d)}{C^{3}_{1} { \color{olive} \alpha_{1}^{(1 \Cup 2) \Cap 3} (d)} } =  \frac{1}{7} \alpha^{1}_{2} ( d ) \dfrac{\alpha^{2}_{1} ( d)}{\frac{3}{7} \, \alpha_{1}^{{1} \Cap {2}} (d)} \dfrac{\alpha^{3}_{1} (d)}{\frac{1}{3} \alpha_{1}^{(1 \Cup {2}) \Cap {3}} ( d)} =  \alpha^{1}_{2} ( d )  \alpha^{3}_{1} ( d ) \\
p^{2}_{11} &= 
\frac{1}{C^{1}_{2}} {\color{blue} \alpha^{1}_{2} (d ) } \dfrac{\color{blue} \alpha^{2}_{2} ( d)}{C^{2}_{2} \, {\color{red} \alpha_{1}^{{1} \Cap {2}} (d)}}\dfrac{\color{blue} \alpha^{3}_{4}(d)}{C^{3}_{4} { \color{olive} \alpha_{2}^{(1 \Cup 2) \Cap 3} (d)} } = \frac{1}{7} \alpha^{1}_{2} (d ) \dfrac{\alpha^{2}_{2} ( d)}{\frac{4}{7} \, \alpha_{1}^{{1} \Cap {2}} (d)}\dfrac{\alpha^{3}_{4}}{\frac{2}{4} \alpha_{2}^{(1 \Cup 2) \Cap 3} (d) } = \frac{1}{2} \alpha^{1}_{1}(d) \alpha^{3}_{4}(d) 
\end{align*}
There exist two columns identical to column $4$ and two columns identical to column $11$. Hence the corresponding connection ratios do not add up to one, but to $\frac{1}{2}$.
\end{example}

We end this section by noting that the formula for the MLE of a matrix satisfying the GRIP in \eqref{eq:GRIPMLE} factors through the associated monomial map $\phi_A$. Re-ordering if necessary, we can denote $d$ in the ambient space of $\Mcal_A$ as
$d = (d_1^1, \hdots, d_1^{c_1},d_2^1,\hdots, d_2^{c_2},\hdots, d_m^{c_m})$
where $c_j$ is the column weight. We have that
$$
p^\star(d) = \phi_A\left(\frac{s_1^1(d)}{C_1^1}, \hdots, \frac{s_{n_1}^1(d)}{C_{n_1}^1}, \frac{s_1^2(d)}{C_1^2}, \hdots, \frac{s_{n_k}^k(d)}{C_{n_k}^k}\right)
$$
where $s_i^\ell(d) = \frac{\alpha^\ell_i(d)}{\displaystyle\sum_{\alpha_{i'}^\ell \in \F^\ell[i]}a_i^\ell(d)}$. This factorization implies a nice correspondence between the MLE of the matrix $\bar{A}$ obtained by removing all repeated columns of $A$ and the MLE of $A$.  Suppose that $\bar{A}$ is of size $n\times m$ and let $\bar{d} = (\bar{d}_1,\hdots,\bar{d}_m)$ be a normalized data vector of counts. 

\begin{prop}\label{prop:RemoveRepeatMLE}
If $A$ is a \multipart matrix with $k$ partitions that satisfies the GRIP, then the MLE of $\Mcal_{\bar{A}}$ for a data vector $\bar{d}$ is given by
$$
\p_{\bar{A}}(\bar{d}) = \phi_{\bar{A}}\left(s_1^1(\bar{d}), \hdots, s^1_{n_1}(\bar{d}), s_1^2(\bar{d}),\hdots, s^k_{n_k}(\bar{d})\right)
$$

where $s_i^\ell(\bar{d}) = \frac{\bar{\alpha}^\ell_i(\bar{d})}{\displaystyle\sum_{\bar{\alpha}_{i'}^\ell \in \F[i]^\ell}\bar{\alpha}_{i'}^\ell(\bar{d})}$ for row vectors $\bar{\alpha}^\ell_i$ of $\bar{A}$.
\end{prop}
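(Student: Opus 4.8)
The plan is to deduce the formula directly from the MLE of $\Mcal_A$ given in Corollary \ref{cor:GRIPMLE} together with Birch's theorem (Proposition \ref{prop:Birch}), by merging the repeated columns of $A$. Index the columns of $A$ by pairs $(j,r)$ with $j \in [m]$ labelling a distinct column of $\bar A$ and $r \in \{1,\dots,c_j\}$ labelling its copies, so that $d = (d^r_j)$ and $\bar d_j = \sum_{r=1}^{c_j} d^r_j$. The key observation is that a column $(j,r)$ of $A$ lies in the support $I^\ell_i$ of a row $\alpha^\ell_i$ exactly when $j$ lies in the support of the corresponding row $\bar\alpha^\ell_i$ of $\bar A$; consequently $\alpha^\ell_i(d) = \bar\alpha^\ell_i(\bar d)$ for every row and every level $\ell$, and moreover both the selector $\Scal(\ell,(j,r))$ and the floret $\Fcal^\ell[\Scal(\ell,(j,r))]$ depend only on $j$, not on the copy index $r$.

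First I would apply Corollary \ref{cor:GRIPMLE} to $A$ and record that its $(j,r)$ coordinate
\[
\p_{(j,r)} = \frac{1}{c_j} \prod_{\ell=1}^{k} \frac{\alpha^\ell_{\Scal(\ell,j)}(d)}{\ds\sum_{\alpha^\ell_i \in \Fcal^\ell[\Scal(\ell,j)]}\alpha^\ell_i(d)}
\]
is independent of $r$ (the column weight of every copy is the common value $c_j$, and $\Scal$ and the florets ignore $r$). Summing over the $c_j$ copies cancels the factor $1/c_j$ and yields, using $\alpha^\ell_i(d)=\bar\alpha^\ell_i(\bar d)$,
\[
\bar\p_j := \sum_{r=1}^{c_j} \p_{(j,r)} = \prod_{\ell=1}^{k} \frac{\bar\alpha^\ell_{\Scal(\ell,j)}(\bar d)}{\ds\sum_{\bar\alpha^\ell_{i'}\in\Fcal^\ell[\Scal(\ell,j)]}\bar\alpha^\ell_{i'}(\bar d)} = \phi_{\bar A}\big(s^1_1(\bar d),\dots,s^k_{n_k}(\bar d)\big)_j .
\]
This already exhibits $\bar\p$ as the point claimed in the statement, so it remains to prove that $\bar\p$ is in fact the MLE of $\Mcal_{\bar A}$ for the data $\bar d$, which I would do by checking the hypotheses of Proposition \ref{prop:Birch}.

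For membership in the model, $\bar\p$ lies in the image of $\phi_{\bar A}$ by construction; its entries are positive whenever $\bar d$ has positive entries, and its entries sum to one because $\sum_j \bar\p_j = \sum_{(j,r)} \p_{(j,r)} = 1$, inheriting normalization from $\p \in \Delta$. Hence $\bar\p \in \Mcal_{\bar A}$. For the Birch linear equations, for any row $\bar\alpha^\ell_i$ of $\bar A$ I would compute
\[
\bar\alpha^\ell_i(\bar\p) = \sum_{j\in \bar I^\ell_i}\sum_{r=1}^{c_j}\p_{(j,r)} = \sum_{(j,r)\in I^\ell_i}\p_{(j,r)} = \alpha^\ell_i(\p) = \alpha^\ell_i(d) = \bar\alpha^\ell_i(\bar d),
\]
where the fourth equality is Birch's theorem applied to the MLE $\p$ of $\Mcal_A$ and the last is the row-evaluation identity noted above. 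By Proposition \ref{prop:Birch}, $\bar\p$ is the MLE of $\Mcal_{\bar A}$, giving exactly the asserted formula.

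I expect no serious obstacle, as the argument is essentially a bookkeeping reduction of the $\bar A$ case to Corollary \ref{cor:GRIPMLE} for $A$; in particular it avoids having to verify that $\bar A$ itself satisfies the GRIP, whose connection ratios need not be integers. The two points that deserve care are the independence of $\p_{(j,r)}$ from the copy index $r$, which must be justified from the fact that repeated columns of $A$ share the same support in every partition (so that $\Scal$ and the florets are insensitive to $r$), and the verification that $\bar\p$ lands in the open simplex, which I handle by inheriting the normalization directly from $\p$ rather than re-deriving it from the staged-tree structure of $\T_{\bar A}$.
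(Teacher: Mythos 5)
Your argument is correct and follows essentially the same route as the paper's proof: both express the coordinates of the MLE of $\Mcal_A$ from Corollary \ref{cor:GRIPMLE}, sum over the $c_j$ repeated columns to cancel the weight $1/c_j$ and recover the claimed formula for $\p_{\bar{A}}(\bar{d})$, and then verify Birch's conditions via the chain $\bar{\alpha}^\ell_i(\p_{\bar{A}}(\bar{d})) = \alpha^\ell_i(\p_A(d)) = \alpha^\ell_i(d) = \bar{\alpha}^\ell_i(\bar{d})$. The only difference is cosmetic bookkeeping (you build $\bar{p}$ by summing coordinates of $\p_A$, the paper substitutes $\bar{d}_j = d_j^1+\dots+d_j^{c_j}$), and your explicit check of normalization is a minor addition the paper leaves implicit.
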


\begin{proof}
Clearly $\p_{\bar{A}}(\bar{d})$ lies on the model $\Mcal_{\bar{A}}$; it remains to show that $\bar{A}(\p_{\bar{A}}(\bar{d})) = \bar{A}(\bar{d})$ which implies that $\p_{\bar{A}}(\bar{d})$ is the MLE of $\Mcal_{\bar{A}}$ by Proposition \ref{prop:Birch}. Consider a data vector $d$ in the ambient space of $\Mcal_A$ where $d_j^i = (\bar{d}_j / c_j)$ for all $j \in [m]$ and $i \in [c_j]$. Then we have that
$\bar{d}_j = d_j^1 + d_j^1 + \hdots + d_j^{c_j}.$

Since $A$ satisfies the GRIP, the MLE of $\Mcal_A$ with respect to $d$, denoted $\p_A(d)$, is given in \eqref{eq:GRIPMLE}. For any row $\bar{\alpha}^\ell_i$ of $\bar{A}$, it is easy to see that
$\alpha^\ell_i(d) = \bar{\alpha}^\ell_i(\bar{d}).$ Thus we can write the coordinate functions of $\p_A(d)$ in terms of the coordinate functions of $\p_{\bar{A}}(\bar{d})$ as
$$
\p_A(d)_j^i  = \frac{\p_{\bar{A}}(\bar{d})_j}{c_j}
$$
for all $i \in [c_j]$. 
So we have that
$$
\sum_{i=1}^{c_j} \p_A(d)^i_j = p_{\bar{A}}(\bar{d})_j,
$$
which implies that
$$
\alpha^\ell_i(\p_A(d)) = \sum_{j=1}^m \sum_{i=1}^{c_j}  \p_A(d)^i_j  =  \sum_{j=1}^m \p_{\bar{A}}(\bar{d})_j=\bar{\alpha}^\ell_i(\p_{\bar{A}}(\bar{d})).
$$

Putting the above two equalities together, along with the fact that $\p_A(d)$ is the MLE of $\Mcal_A$, we have that
$\bar{\alpha}^\ell_i(\bar{d}) = \alpha^\ell_i(d) = \alpha^\ell_i(p_A(d)) = \bar{\alpha}^\ell_i(p_{\bar{A}}(\bar{d})),$
which in turn implies that $\bar{\alpha}^\ell_i(\bar{d}) =  \bar{\alpha}^\ell_i(p_{\bar{A}}(\bar{d}))$.
\end{proof}

\begin{example}
In Proposition \ref{prop:RemoveRepeatMLE}, if one removes all repeated columns from a \multipart matrix that satisfies the GRIP, the result also satisfies the GRIP. The reverse is, however, not true. Consider the independence model from Example \ref{Ex:2x2Indep}. We see in that example that the IPS algorithm produces the MLE in one cycle. One can easily check that this model satisfies the GRIP as well, so this is not surprising.

\noindent
\begin{minipage}[t]{0.75\textwidth}
However, if we simply repeat the last column to obtain a new \multipart matrix, shown here to the right, then the IPS algorithm does not produce the MLE after one cycle. In fact, one can check (by computing the solution set to the polynomial equations defining the ML-degree) that the resulting partition model has ML-degree greater than one, and hence non-rational MLE. This implies that the IPS algorithm \textit{cannot} produce the MLE exactly.
\end{minipage} \hfill
\begin{minipage}[t]{0.225\textwidth}
\begin{align*}
\begin{pmatrix}
1 & 1  & \cdot & \cdot   &\cdot\\
\cdot & \cdot & 1& 1 &1\\
\hdashline
1 & \cdot & 1 & \cdot &\cdot\\
\cdot & 1 & \cdot & 1 &1
\end{pmatrix}
\end{align*} 
\end{minipage}
\end{example}

\section{Model families satisfying the GRIP}\label{Sec:models}

\subsection{Hierarchical models} \label{Sec:HierarchicalModels}

In this section, we justify our use of the name ``generalized running intersection property" in the case of binary hierarchical models. 
As discussed in Section \ref{ssec:hierarchical}, we can associate a hierarchical model with a simplicial complex $\Gamma$.
Recall that if a simplicial complex is decomposable, then by Lemma \ref{lem:decompostoRIP}, there exists an ordering on the facets that satisfies RIP.
First we define the matrix $A_{\Gamma}$ that realizes the hierarchical model on $\Gamma$ as a log-linear model in the case where each random variable has exactly two states. The columns of the matrix $A_{\Gamma}$ are indexed by subsets of $[n]$.
We could equivalently index the columns by 0/1 strings of length $n$ by
taking each $S \subset [n]$ to be the the set of positions in the string equal to 1.

Let $F_1, \dots, F_k$ be the facets of $\Gamma$. The rows of $A_{\Gamma}$ are divided into blocks $A^1, \dots, A^k$ each with $2^{\vert F_i \vert}$ rows. 
The rows of the partition matrix $A^i$ are of the form $a^i_S$ where $S \subset F_i$ with
\[
a^i_S(T) = \begin{cases}
1, & \text{ if } T \cap F_i = S \\
0, &\text{ otherwise,}
\end{cases}
\]
for each $T \subset [n]$.
The {\bf hierarchical model} $\Mcal_{\Gamma}$ is the closure of the  
image of the monomial map defined by $A_{\Gamma}$ intersected with the probability simplex, $\Delta_{2^n}$.

\begin{prop}\label{prop:rip2grip}
Let $\Gamma$ be a simplicial complex and let $A$ be a \multipart matrix representing the binary hierarchical model on $\Gamma$ that satisfies the running intersection property. Then $A$ satisfies the generalized running intersection property as well.
\end{prop}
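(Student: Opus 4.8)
The plan is to verify the three conditions of the GRIP (Definition \ref{def:GRIP}) directly for the matrix $A = A^{1,\dots,k}$ whose blocks $A^1,\dots,A^k$ are ordered according to the RIP ordering of the facets $F_1,\dots,F_k$. First I would fix notation: the columns of $A$ are indexed by subsets $T \subseteq [n]$, and the row $a^i_S$ of block $A^i$ (for $S \subseteq F_i$) has support $\{T : T \cap F_i = S\}$. Writing $G_\ell = \bigcup_{i=1}^{\ell} F_i$, two columns of $A^{1,\dots,\ell}$ are equal if and only if the corresponding sets $T, T'$ agree on $G_\ell$; hence every column of $A^{1,\dots,\ell}$ is repeated exactly $2^{\,n - |G_\ell|}$ times.

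The first two conditions then follow with no appeal to the RIP. Since all column weights $c^\ell_j = 2^{\,n-|G_\ell|}$ are equal, the ratio $c^\ell_j / c^{\ell-1}_j$ is constant across all columns, so $A$ is well-connected, giving condition (1). For the floret condition (2), a row of $B_\ell = \Cup_{n=1}^{\ell} A^n$ indexed by $U \subseteq G_\ell$ is connected to the row $a^{\ell+1}_S$ of $A^{\ell+1}$ precisely when $U$ and $S$ agree on $G_\ell \cap F_{\ell+1}$; thus the set of rows of $A^{\ell+1}$ connected to $U$ depends only on $U \cap (G_\ell \cap F_{\ell+1})$, and two such sets are equal or disjoint according to whether these restrictions coincide. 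In particular the florets of $A^{\ell+1}$ are indexed by the subsets $W \subseteq G_\ell \cap F_{\ell+1}$.

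The crux of the argument is condition (3), and this is where the RIP enters. Reading off the definition of $B_\ell \Cap A^{\ell+1}$, its row corresponding to the floret $W$ is the indicator vector equal to $1$ on the column $T$ exactly when $T \cap (G_\ell \cap F_{\ell+1}) = W$. Setting $H = G_\ell \cap F_{\ell+1}$, the RIP provides an index $k_\ell \leq \ell$ with $H = F_{k_\ell} \cap F_{\ell+1}$, so in particular $H \subseteq F_{k_\ell}$. I would then show that this floret indicator row equals $\sum_{S' \subseteq F_{k_\ell},\, S' \cap H = W} a^{k_\ell}_{S'}$: evaluating the right-hand side at a column $T$ gives $1$ exactly when $(T \cap F_{k_\ell}) \cap H = W$, and since $H \subseteq F_{k_\ell}$ this condition is simply $T \cap H = W$, matching the floret indicator. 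Because $k_\ell \leq \ell$, the block $A^{k_\ell}$ sits inside $A^{1,\dots,\ell}$, so this row lies in the rowspan of $A^{1,\dots,\ell}$, establishing condition (3).

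The main obstacle I anticipate is purely bookkeeping: correctly identifying the rows of $B_\ell \Cap A^{\ell+1}$ with the indicator functions of the restriction map $T \mapsto T \cap H$, and confirming that the RIP set-equality $G_\ell \cap F_{\ell+1} = F_{k_\ell} \cap F_{\ell+1}$ is exactly what allows these indicators to be expressed as a sum of rows inside the single block $A^{k_\ell}$. Once the dictionary between subsets of $[n]$ and columns is fixed, conditions (1) and (2) are automatic, and the entire weight of the hypothesis falls on this last step.
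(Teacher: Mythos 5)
Your proposal is correct and follows essentially the same route as the paper's proof: verify conditions (1) and (2) directly from the subset-indexing of columns and rows (no RIP needed), and use the RIP equality $G_\ell \cap F_{\ell+1} = F_{k_\ell} \cap F_{\ell+1}$ only to write each floret indicator row of $B_\ell \Cap A^{\ell+1}$ as a sum of rows of the single block $A^{k_\ell}$. The only differences are notational ($G_\ell$, $H$, $k_\ell$ in place of the paper's $[q]$, $[q]\cap F_{\ell+1}$, $s$).
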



\begin{proof}
Let $\Gamma$ be a simplicial complex on $[p]$ with facets $F_1,\dots,F_k$ such that the resulting \multipart matrix
$A^{1,\dots,k}$ satisfies the running intersection property. Let $1 \leq \ell \leq k$, $B = \Cup_{n=1}^{\ell} A^n$ and let $C = A^{\ell+1}$.
Without loss of generality, let $F_1,\dots,F_{\ell}$ form a simplicial complex on $[q]$ for $q \leq p$.
Since $A^{1,\dots,k}$ satisfies the RIP, there exists an $s \leq \ell$ such that
$[q] \cap F_{\ell+1} = F_s \cap F_{\ell+1}$.

First, we wish to show that the matrix $BC$ satisfies the floret condition.
The matrix $B$ has rows indexed by subsets of $[q]$. For each $S \subset [p]$ and $T \subset [q]$,
the $S$ entry of $\alpha^B_T$ is equal to $1$ if $T = S \cap [q]$ and $0$ otherwise.
Let $U \subset F_{\ell+1}$. We claim that $\alpha^C_U$ is connected to $\alpha^B_T$ if and only if
$U \cap [q] = T \cap F_{\ell+1}$. 

Indeed, suppose that $U \cap [q] = T \cap F_{\ell +1}$. Let $S = U \cap T$. Then
\[
S \cap [q] = (U \cap [q]) \cup (T \cap [q]) = (T \cap F_{\ell+1}) \cup T = T.
\]
Similarly, $S \cap F_{\ell+1} = U$. Hence the $S$ coordinates of $\alpha^B_T$ and $\alpha^C_U$ are both equal to one, and these rows are connected.

Conversely, suppose that $\alpha^B_T$ and $\alpha^C_U$ are connected. Then there exists an $S \subset [p]$ such that $U = S \cap F_{\ell+1}$ and $T = S \cap [q]$. Intersecting the first equation with $[q]$ and the second with $F_{\ell+1}$ yields that $U \cap [q] = T \cap F_{\ell+1}$, as needed.

Thus, for each $T, T' \subset [q]$, if $T \cap F_{\ell +1} = T' \cap F_{\ell+1}$, then we have that the sets of rows of $C$ connected to $\alpha^B_T$ and $\alpha^B_{T'}$ are equal. Otherwise these sets are disjoint. Hence $BC$ satisfies the floret condition.

For each $S \subset [p]$, the number of columns of $B$ identical to the column associated to $S$ is $2^{p-q}$. Indeed, the columns of $B$ associated to $S$ and $S'$ are equal if and only if $S \cap [q] = S' \cap [q]$, and there are $2^{p-q}$ such subsets of $[p]$. Similarly, the number of columns of $BC$ identical to the column associated to $S$ is $2^{p-q'}$, where $q' = \#([q] \cup F_{\ell+1})$. Thus, since all column weights within $B$ are equal to one another and all column weights within $BC$ are equal to one another, $BC$ is well-connected.

Finally, we must show that $B \Cap C$ lies in the rowspan of $A^{1,\dots,\ell}$.
Recall that there exists an $s \leq \ell$ such that $[q] \cap F_{\ell+1} = F_s \cap F_{\ell+1}$.
We showed above that the rows of $D = B \Cap C$ are indexed by subsets of $[q] \cap F_{\ell+1}$.
For each $T \subset [q] \cap F_{\ell+1}$ and each $S \subset [p]$, the row $\alpha^D_T$ has $S$ entry
equal to $1$ if $S \cap ([q] \cap F_{\ell+1}) = T$ and $0$ otherwise.

We claim that for each $T \subset [q] \cap F_{\ell+1}$, $\alpha^D_T$ is the sum of all rows of $A^s$, $\alpha^s_U$, such that $U \cap F_{\ell+1} = T$, i.e.  $\alpha^D_T$ is equal to
\begin{equation}\label{eq:HierarchicalRowspan}
\sum_{\substack{U \subset F_s: \\ U \cap F_{\ell+1} = T}} \alpha^s_U.
\end{equation}

Indeed, let $S \subset [p]$ satisfy $S \cap F_{\ell+1} \cap [q] = T$.
Then we have that $(S \cap F_s) \cap F_{\ell+1} = T$.
In particular, the term of Equation \ref{eq:HierarchicalRowspan} corresponding to $U = S \cap F_s$
has $S$ entry equal to $1$, as needed.

Similarly, if $S \cap F_{\ell+1} \cap [q] = T' \neq T$, then we have $T' = (S \cap F_s) \cap F_{\ell+1}$.
In particular, the row $\alpha^s_{S \cap F_s}$ is the row of $A^s$ supported on $S$.
But it is not a term of the sum in Equation (\ref{eq:HierarchicalRowspan}). 
Hence the sum in (\ref{eq:HierarchicalRowspan}) has $S$ entry equal to $0$, as needed.

Since each row of $D$ lies in the rowspan of $A^s$, it lies in the rowspan of $A^{1,\dots,\ell}$, as needed.
Hence $A^{1,\dots,k}$ satisfies the generalized running intersection properety.
\end{proof}

We note that an analogous argument proves the result for non-binray random variables as well; we do not include a proof here to maintain notational simplicity.

\subsection{Staged tree models}\label{Sec:StagedTrees}

In this section, we explore the implications of the results in Section \ref{sec:GRIP} for staged tree models. We show that a staged tree model is balanced and stratified if and only if the associated matrix satisfies the GRIP, leading to a new characterization of balanced, stratified staged trees. In particular for the associated \multipart matrix, the IPS algorithm results in the MLE after one cycle.

To define the notion of a balanced staged tree, first we must describe the interpolating polynomial of a staged tree $\T$, first introduced in \cite{GS18}. For any vertex $v\in V$, we can consider the subtree $\T_v$ rooted at $v$. We denote the set of root-to-leaf paths of $\T_v$ (or equivalently the $v$-to-leaf paths in $\T$) as $\Lambda_v$.

\begin{definition}
For a staged tree $(\T,\theta)$ and a vertex $v\in V$, let
$t(v) := \sum_{\lambda \in \Lambda_v} \prod_{s_i \in \theta(\lambda)} s_i.$
If $v_0$ is the root of $\T$, then $t(v_0)$ is the \textbf{interpolating polynomial} of $\T$.
\end{definition}

\begin{definition}
A staged tree $(\T,\theta)$ is \textbf{balanced} if for any two vertices $v,w \in V$ such that $\mathcal{F}_v=\mathcal{F}_w$ the following equation
\begin{equation}\label{eq:balancedcondition}
t(v')t(w'')=t(w')t(v'')
\end{equation}
is satisfied for all distinct pairs of vertices $v', v''\in \ch(v)$ and $w', w''\in \ch(w)$ where $\theta(v,v')=\theta(w,w')$ and $\theta(v,v'')=\theta(w,w'')$.
\end{definition} 

\begin{example}\label{ex:unbalanced}
Consider the staged tree $(\T,\theta)$ in Example \ref{ex:firststagedtree}. To determine if the tree is balanced we must compare all pairs of vertices in the same stage. The pairs $v_{s_0t_0}, v_{s_1t_0}$ and $v_{s_0t_1}, v_{s_1t_1}$ trivially satisfy the condition in \eqref{eq:balancedcondition}. Thus to determine whether $\T$ is balanced it is enough to check whether \eqref{eq:balancedcondition} holds for $v_{s_0}$ and $v_{s_1}$. We have that
$\ch(v_{s_0}) = \{ v_{s_0t_0}, v_{s_0t_1} \}$ and $\ch(v_{s_1}) = \{ v_{s_1t_0}, v_{s_1t_1}\}.$
So we have to check that $t(v_{s_0t_0})t(v_{s_1t_1}) = t(v_{s_0t_1})t(v_{s_1t_0})$. Writing out the corresponding polynomials,
$$
t(v_{s_0t_0}) = t(v_{s_1t_0}) = r_0+r_1+r_2,\quad t(v_{s_0t_1}) = t(v_{s_1t_1}) = r_3+r_4,
$$
we see that this is satisfied. The staged tree $(\T,\theta)$ additionally satisfies the property that for any two vertices $v,w$ in the same stage, $t(v) = t(w)$. In \cite[Lemma 2.13]{AD19} it was shown that this property implies balanced. To ``unbalance" the staged tree, one could re-color as blue one of the red vertices, changing the resulting floret. For another example of an unbalanced staged tree see \cite[Example 2.14]{AD19}.
\end{example}

One can determine statistical and algebraic properties of a staged tree model from its interpolating polynomial \cite{BGRS18, GS18} and  whether or not it is balanced \cite{AD19, DG20}. In particular the following proposition follows from \cite[Thm. 10]{DG20} and applies to our definition of staged tree models due to \cite[Lem. 5.13]{AD19}

\begin{prop}\label{prop:BalancediffToric}
A staged tree model $\Mcal_\T$ is equal to the toric model $\tMT$ if and only if $(\T,\theta)$ is balanced.
\end{prop}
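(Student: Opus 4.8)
The plan is to derive this equivalence from Theorem 10 of \cite{DG20}, which establishes it for staged trees in the classical sense (those with no singleton florets), and then to transfer it to the more general notion of staged tree used in this paper by way of \cite[Lem.\ 5.13]{AD19}.

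First I would record that the inclusion $\Mcal_\T \subseteq \tMT$ holds for every staged tree, so the content of the proposition is that balancedness is exactly the condition guaranteeing the reverse inclusion $\tMT \subseteq \Mcal_\T$. When $(\T,\theta)$ has no singleton florets this is precisely \cite[Thm.\ 10]{DG20}: the balancing equations \eqref{eq:balancedcondition} among the interpolating polynomials $t(v)$ are exactly the relations needed for every point of the unconstrained toric model to be rescalable into the image of the constrained parametrization $\phi_\T$.

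Second, to obtain the statement at the level of generality of Definition \ref{def:StagedTree}, I would invoke \cite[Lem.\ 5.13]{AD19}. A singleton floret forces its unique parameter to equal $1$ under the constraint $\sum_{s_i \in \F} s_i = 1$, and contracting all such florets produces a classical staged tree $(\T',\theta')$ with $\Mcal_{\T'} = \Mcal_\T$ and $\overline{\Mcal}_{\T'} = \tMT$. The point requiring care is that this contraction respects balancedness, i.e.\ that $(\T,\theta)$ is balanced if and only if $(\T',\theta')$ is; the key observation is that a singleton floret contributes a common factor to the interpolating polynomials on the two sides of \eqref{eq:balancedcondition}, so that contraction neither creates nor destroys solutions of the balancing equations.

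I expect this compatibility between singleton-floret contraction and the balancing equations to be the only genuine obstacle; everything else is bookkeeping. Once it is in place, combining the equivalence for $(\T',\theta')$ supplied by \cite{DG20} with the model identifications $\Mcal_{\T'} = \Mcal_\T$ and $\overline{\Mcal}_{\T'} = \tMT$ from \cite{AD19} yields the desired equivalence for $(\T,\theta)$.
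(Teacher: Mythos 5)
Your proposal matches the paper's treatment exactly: the paper gives no standalone proof but simply observes that the statement ``follows from \cite[Thm.~10]{DG20} and applies to our definition of staged tree models due to \cite[Lem.~5.13]{AD19},'' which is precisely the two-step reduction you describe. Your additional remark on why singleton-floret contraction preserves the balancing equations \eqref{eq:balancedcondition} is a correct elaboration of a detail the paper leaves implicit.
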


\begin{example} For the staged tree $(\T,\theta)$ introduced in Example \ref{ex:firststagedtree}, we can define the toric model $\tMT$ by the monomial map $\phi_\T$.

\noindent
\begin{minipage}[t]{0.52\textwidth}
This is equivalent to the monomial map $\phi_{A_\T}$ arising from the \multipart matrix $A_{\T}$ on the right.

Thus $\tMT$ is the log-linear model $\Mcal_{A_{\T}}$. In Example \ref{ex:unbalanced} we showed that $(\T,\theta)$ is a balanced staged tree, and hence Proposition \ref{prop:BalancediffToric} implies that $\Mcal_\T = \tMT = \Mcal_{A_{\T}}$.
\end{minipage} \hfill
\begin{minipage}[t]{0.45\textwidth}
\vspace{-1cm}
\begin{center}
\centering\raisebox{\dimexpr \topskip-\height}{%
\scalebox{0.85}{
\begin{tikzpicture}
\draw[] (9.5,0) node{$A_{\T} = \begin{pmatrix}
 1 & 1 & 1 & 1 & 1 & \cdot & \cdot & \cdot & \cdot & \cdot \\
 \cdot & \cdot &  \cdot & \cdot & \cdot  & 1 & 1 & 1 & 1 & 1 \\
 \hdashline
 1 & 1 & 1  & \cdot & \cdot & 1 & 1 & 1  & \cdot & \cdot \\
 \cdot & \cdot & \cdot  & 1 & 1  & \cdot & \cdot & \cdot & 1 & 1 \\
 \hdashline
  1 & \cdot & \cdot & \cdot  & \cdot & 1 & \cdot & \cdot & \cdot & \cdot \\
 \cdot & 1 & \cdot & \cdot & \cdot & \cdot & 1 & \cdot & \cdot & \cdot  \\
 \cdot & \cdot & 1 & \cdot &  \cdot & \cdot & \cdot & 1 & \cdot & \cdot \\
 \cdot & \cdot & \cdot & 1 & \cdot &  \cdot & \cdot & \cdot & 1 & \cdot \\
 \cdot & \cdot & \cdot & \cdot & 1 & \cdot & \cdot & \cdot & \cdot  & 1  \\
 \end{pmatrix} $};
\draw[] (13.1,1.9) node{\footnotesize $s_{0}$};
\draw[] (13.1,1.4) node{\footnotesize $ s_{1} $};
\draw[] (13.1,1) node{\footnotesize $t_{0}$};
\draw[] (13.1,0.5) node{\footnotesize $t_{1}$};
\draw[] (13.1,0) node{\small $r_{0}$};
\draw[] (13.1,-0.4) node{\small $r_1$};
\draw[] (13.1,-0.9) node{\small $r_2$};
\draw[] (13.1,-1.4) node{\small $r_3$};
\draw[] (13.1,-1.9) node{\small $r_4$};
\end{tikzpicture}
} }
\end{center}
\end{minipage}
\end{example}

In the case that $(\T,\theta)$ is stratified and balanced, the staged tree model $\Mcal_\T$ is also a partition model. Since the tree is stratified the associated matrix $A_\T$ with the monomial map $\phi_\T$ (defined in \eqref{eq:TreeToricMap}) is a \multipart matrix, and being balanced implies that $\Mcal_\T=\widetilde{\Mcal}_{\T_A}=\Mcal_{A_\T}$. 

Note that the set of models defined by \multipart matrices $A$ whose associated tree $\T_A$ is a balanced, stratified staged tree is larger than the set of staged tree models arising from balanced, stratified staged trees. When the matrix $A$ has repeated columns, this information is lost when constructing the tree graph $\T_A$. Thus the staged tree model arising from $(\T_A,\theta_A)$ is the model obtained by removing repeated columns from a staged matrix $A$.

\begin{example}\label{ex:laststagedtree} Consider the matrix $A$ from Example \ref{ex:connectionratio}, which is used throughout Section \ref{sec:GRIP}. On the left of Figure \ref{Fig:TreeFlorets} the associated tree $(\T_A,\theta_A)$ is drawn and on the right we color portions of the matrix corresponding to the blue and red florets. We draw double circles around four of the leaves to indicate that the associated column is repeated in the matrix $A$. However, the underlying labeled tree graph is exactly the same as in Examples \ref{ex:firststagedtree} and \ref{ex:unbalanced}, and hence is a stratified and balanced staged tree. 

\scalebox{0.9}{
\begin{tikzpicture}[grow=right, edge from parent/.style={draw,-latex}, rounded corners]
\node[circle, draw, line width = 0.75pt]  {}
child[level distance = 8mm, sibling distance=3cm, line width = 0.75pt] {node[circle,draw, fill= violet!40] {} { 
	child[sibling distance=1.5cm, level distance = 9mm]{node[circle,draw,fill= red!50] {}
		child[sibling distance=0.5cm]{node[circle,draw, double, red!50] {}
		edge from parent[double] node [below, align=center] {\small $r_{4}$}}
		child[sibling distance=0.5cm]{node[circle,draw, double, red!50] {}
		edge from parent[double]}
	 edge from parent node [->, below, align=center]{\small $t_{1}$}          
	}
	child[sibling distance=1.5cm , level distance = 9mm]{node[circle,draw, fill=blue!30] {}
		child[sibling distance=0.6cm]{node[circle,draw, blue!50!white] {}}
		child[sibling distance=0.6cm]{node[circle,draw, blue!50!white] {}}
		child[sibling distance=0.6cm]{node[circle,draw, blue!50!white] {}}     
	}                                
}edge from parent node [left, align=center] {\small $s_{1}$}} 
child[level distance = 8mm,sibling distance=3cm, line width = 0.75pt] {node[circle,draw, fill= violet!40] {} 
	child[sibling distance=1.5cm, level distance = 9mm]{node[circle,draw, fill=red!50] {} 
		child[sibling distance=0.5cm]{node[circle,draw, double, red!50] {}
		edge from parent[double] node [below, align=center] {\small $r_{4}$}}
		child[sibling distance=0.5cm]{node[circle,draw, double, red!50] {}
		edge from parent[double] node [above, align=center] {\small $r_{3}$}}   
		 edge from parent node [->, above, align=center]
                {\small $t_{1}$}               
	}
	child[sibling distance=1.5cm, level distance = 9mm]{node[circle,draw, fill=blue!30]{}
		child[sibling distance=0.6cm,draw]{node[circle,draw, blue!50!white] {}
		edge from parent node [below, align=center] {\small $r_{2}$}}
		child[sibling distance=0.6cm]{node[circle,draw, blue!50!white] {}
		edge from parent node [above=-3.5pt, align=center, xshift=2pt] {\small $r_{1}$}}
		child[sibling distance=0.6cm]{node[circle,draw, blue!50!white] {}
		edge from parent node [above, align=center] {\small $r_{0}$}}
		 edge from parent node [->, above, align=center]
                { \small $t_{0}$}     
	}   edge from parent node [left, align=center] {\small $s_{0}$}                        
};
\draw[blue!20, fill =blue!20] (10,1.7) rectangle (11.5,-1.55);
\draw[blue!20, fill =blue!20] (6.15,1.7) rectangle (7.75,-1.55);
\draw[red!20, fill =red!20!white] (7.85,1.7) rectangle (9.9,-1.55);
\draw[red!20, fill =red!20!white] (11.6,1.7) rectangle (13.6,-1.55);
\draw[] (3.25,2.85) node{\footnotesize $p_{000}$};
\draw[] (3.25,2.25) node{\footnotesize $p_{001}$};
\draw[] (3.25,1.6) node{\footnotesize $p_{002}$};
\draw[] (3.65,1) node{\footnotesize $p_{010}, p_{010}'$};
\draw[] (3.65,0.5) node{\footnotesize $p_{011}, p_{011'}$};
\draw[] (3.25,-.15) node{\footnotesize $p_{100}$};
\draw[] (3.25,-0.75) node{\footnotesize $p_{101}$};
\draw[] (3.25,-1.35) node{\footnotesize $p_{102}$};
\draw[] (3.65,-2) node{\footnotesize $p_{110}, p_{110}'$};
\draw[] (3.65,-2.5) node{\footnotesize $p_{111}, p_{111}'$};
\draw[] (9.5,0.5) node{$A = \begin{pmatrix}
 1 & 1 & 1 & 1 & 1 & 1 & 1  & \cdot & \cdot & \cdot & \cdot & \cdot & \cdot & \cdot\\
 \cdot & \cdot & \cdot & \cdot & \cdot & \cdot & \cdot  & 1 & 1 & 1 & 1 & 1 & 1 & 1 \\
 \hdashline
 1 & 1 & 1  & \cdot & \cdot & \cdot & \cdot & 1 & 1 & 1  & \cdot & \cdot & \cdot & \cdot\\
 \cdot & \cdot & \cdot  & 1 & 1 & 1 & 1 & \cdot & \cdot & \cdot & 1 & 1 & 1 & 1\\
 \hdashline
  1 & \cdot & \cdot & \cdot & \cdot & \cdot & \cdot & 1 & \cdot & \cdot & \cdot & \cdot & \cdot & \cdot \\
 \cdot & 1 & \cdot & \cdot & \cdot & \cdot & \cdot & \cdot & 1 & \cdot & \cdot & \cdot & \cdot & \cdot \\
 \cdot & \cdot & 1 & \cdot & \cdot & \cdot & \cdot & \cdot & \cdot & 1 & \cdot & \cdot & \cdot & \cdot \\
 \cdot & \cdot & \cdot & 1 & \cdot & 1 & \cdot & \cdot & \cdot & \cdot & 1 & \cdot & 1 & \cdot \\
 \cdot & \cdot & \cdot & \cdot & 1 & \cdot & 1 & \cdot & \cdot & \cdot & \cdot & 1 & \cdot & 1 \\
 \end{pmatrix} $};
\draw[] (14.15,2.9) node{$E$};

\draw[] (14.05,2.4) node{\footnotesize $s_{0}$};
\draw[] (14.25,1.9) node{\footnotesize $ s_{1} $};
\draw[] (14.05,1.5) node{\footnotesize $t_{0}$};
\draw[] (14.25,1) node{\footnotesize $t_{1}$};
\draw[] (14.05,0.5) node{\small $r_{0}$};
\draw[] (14.25,0.1) node{\small $r_1$};
\draw[] (14.05,-0.4) node{\small $r_2$};
\draw[] (14.25,-0.9) node{\small $r_3$};
\draw[] (14.05,-1.4) node{\small $r_4$};
\draw[dashed, darkgray] (13.75,1.7)--(14.5,1.7);
\draw[dashed, darkgray] (13.75,0.73)--(14.5,0.73);
\draw[] (10,-2.25) node { \color{blue} $\mathcal{F}_{v_{s_0t_0}} = \mathcal{F}_{v_{s_1t_0}} = \{r_{0}, r_{1}, r_{2}\}$ \quad \color{red} $\mathcal{F}_{v_{s_0t_1}} = \mathcal{F}_{v_{s_1t_1}} = \{r_{3},r_{4}\}$};
\end{tikzpicture}
}
 \captionsetup{width=.8\linewidth}
 \captionof{figure}{The stratified and balanced staged tree $\mathcal{T}$ on the left corresponding to the matrix $A$ on the right.} \label{Fig:TreeFlorets}
\end{example}

We show that the stratified and balanced conditions on a staged tree are related to the generalized running intersection property for \multipart matrices.

\begin{lemma}\label{lem:treeWC}
The  \multipart matrix associated to a stratified and balanced staged tree is well-connected.
\end{lemma}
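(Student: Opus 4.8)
The plan is to give the column weights $c^\ell_j$ of $A_\T$ a combinatorial meaning inside the tree and then reduce well-connectedness to the balanced condition specialized at the all-ones point. First I would identify $c^\ell_j$ with a leaf count. Because the tree is stratified, its labels are partitioned by level, so the $j$th column of the submatrix $A^{1,\dots,\ell}$ records exactly the labels of the first $\ell$ edges of the root-to-leaf path $\lambda_j$. Since a staged tree has locally distinct labels at every vertex ($|\theta_v|=|E(v)|$), this sequence of labels determines and is determined by the level-$\ell$ vertex $v_j$ through which $\lambda_j$ passes. Hence two columns of $A^{1,\dots,\ell}$ agree precisely when the corresponding paths share their level-$\ell$ vertex, and I conclude $c^\ell_j=|\Lambda_{v_j}|$, the number of leaves in the subtree rooted at $v_j$.

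With this identification I would translate the well-connectedness condition into a statement about leaf counts. Fix a row $\alpha^\ell_i$ with $\ell>1$ and two columns $j,j'\in I^\ell_i$. Let $w,v$ be the level-$(\ell-1)$ and level-$\ell$ vertices on $\lambda_j$, and $w',v'$ the analogous vertices on $\lambda_{j'}$; the edges $(w,v)$ and $(w',v')$ both carry the label $s^\ell_i$. Then $c^\ell_j/c^{\ell-1}_j=|\Lambda_v|/|\Lambda_w|$ and $c^\ell_{j'}/c^{\ell-1}_{j'}=|\Lambda_{v'}|/|\Lambda_{w'}|$, so well-connectedness is exactly the equality $|\Lambda_v|/|\Lambda_w|=|\Lambda_{v'}|/|\Lambda_{w'}|$. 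Since $w$ and $w'$ each have an outgoing edge labeled $s^\ell_i$, their florets are not disjoint and hence equal by the staged-tree axiom; both lie at level $\ell-1$ by construction, so $w$ and $w'$ belong to a common stage.

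The key step, and the place where balancedness is used, is to apply the defining balanced equation to this stage and then evaluate all edge-label indeterminates at $\mathbf{1}$, using that $t(u)|_{\mathbf{1}}=|\Lambda_u|$. After specialization the balanced identities say that for any two labels $a,b$ in the shared floret the quantity $|\Lambda_{u_a}|/|\Lambda_{u_b}|$, where $u_a$ denotes the child reached along the edge labeled $a$, is the same at $w$ as at $w'$. Equivalently, the vectors of child leaf counts indexed by the floret are proportional, say $|\Lambda_{\,a\text{-child of }w'}|=\kappa\,|\Lambda_{\,a\text{-child of }w}|$ for a constant $\kappa$ independent of $a$. Summing over the floret gives $|\Lambda_{w'}|=\kappa|\Lambda_w|$, while the coordinate corresponding to $s^\ell_i$ gives $|\Lambda_{v'}|=\kappa|\Lambda_v|$; the factor $\kappa$ cancels and yields the desired equality $|\Lambda_v|/|\Lambda_w|=|\Lambda_{v'}|/|\Lambda_{w'}|$. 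Defining $C^\ell_i$ as this common ratio then exhibits $A_\T$ as well-connected.

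I expect the main obstacle to be the bookkeeping in the first step: carefully arguing that equality of columns in $A^{1,\dots,\ell}$ corresponds to passing through a common level-$\ell$ vertex, which genuinely uses both stratification (to align labels with levels, so $A_\T$ is a multipartition matrix and prefixes of paths are read off level by level) and the local distinctness of labels at each vertex. A secondary subtlety is that the balanced condition is a family of polynomial identities; I must verify that specializing at $\mathbf{1}$ preserves enough information to conclude proportionality of the full child-count vectors, rather than merely pairwise equalities, so that the argument handles florets of arbitrary size and not only binary splits.
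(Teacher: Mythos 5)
Your proof is correct and takes essentially the same route as the paper's: both arguments identify $c^\ell_j$ with the number of leaves below the level-$\ell$ vertex of $\lambda_j$ (equivalently the coefficient sum $\sigma(t(v'))=t(v')\vert_{\mathbf{1}}$), sum the balanced identities over the shared floret, and evaluate at the all-ones point to cancel the common factor and obtain equality of the connection ratios. The only cosmetic difference is the order of operations --- you specialize at $\mathbf{1}$ and then sum over the floret, while the paper first sums the polynomial identities to get $t(v')t(w)=t(w')t(v)$ and then applies the coefficient-sum map $\sigma$.
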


\begin{proof}
For a balanced and stratified staged tree we have that $\Mcal_A=\Mcal_{\T_A}$ and each column of $A$ represents a distinct root-to-leaf path $\lambda_j$ in $\T_A$. Consider an edge $(v,v')$ of $\T_A$ with label $s^\ell_i$, $1\leq \ell\leq m$. Then there exists a subset of $I\subset I^\ell_i \subset \{1,\hdots,m\}$ representing the set of root-to-leaf paths containing $(v,v')$, i.e.
$$
\{ \lambda\in \Lambda\, |\, (v,v') \in E(\lambda)\} = \{ \lambda_j\,|\, j\in I\subset I^\ell_i\}.
$$
The number of these paths, $|I|$ is exactly $c^\ell_j$ for any $j\in I$ since $c^\ell_j$ describes the number of repeated columns of the first $\ell$ partitions each of which is a distinct column of $A$. Thus the number of terms in the summation
$$
t(v') = \sum_{\lambda \in \Lambda_v} \prod_{s\in \theta(\lambda)} s = \sum_{\lambda_j\, |\, j\in I} \prod_{s\in \theta(\lambda_j)}s
$$
is $c^\ell_j$. In other words if we denote $\sigma(t(v'))$ as the sum of the coefficients of $t(v')$, then $\sigma(t(v'))=c^\ell_j$. Note that, for any $v,w\in V$, since $t(v)$ is a polynomial with positive integers coefficients, $\sigma(t(v)t(w)) = \sigma(t(v))\sigma(t(w))$.

Now fix a row $\alpha^\ell_i$ in the $\ell$-th partition of $A$ and an index $j\in I^\ell_i$. Let $(v,v')$ be the edge in $\T_a$ such that $\theta(v,v')=s^\ell_i$ and $(v,v')\in \theta(\lambda_j)$. Similarly let $\tilde{j}\in I^\ell_i$ be any other index in $I^\ell_i$ and $(w,w')$ be the edge such that $\theta(w,w')=s^\ell_i$ and $(w,w')\in \theta(\lambda_{\tilde{j}})$. Since $\T_A$ is balanced, for any edges $(v,v'')\in E(v)$ and $(w,w'')\in E(w)$ we have
$t(v')t(w'')=t(w')t(v'').$
In particular we can sum over all the vertices in $\ch(v)$ and $\ch(w)$:
\[
t(v')\sum_{w''\in \ch(w)} t(w'') = t(w') \sum_{v''\in \ch(v)} t(v''), \]
and hence $t(v')t(w)=t(w')t(v)$.
Similarly as before, the number of paths containing an edge $(v,v'')$ or $(w,w'')$ for $v''\in \ch(v), w''\in \ch(w)$ is $c^{\ell-1}_j$ or $c^{\ell-1}_{\tilde{j}}$ respectively. Thus $\sigma(t(v)) = c^{\ell-1}_j$ and $\sigma(t(w))= c^{\ell-1}_{\tilde{j}}$. Then the above equality implies that
$\sigma(t(v')t(w))=\sigma(t(w')t(v))$. Thus $c^{\ell}_jc^{\ell-1}_{\tilde{j}}=c^{\ell}_{\tilde{j}}c^{\ell-1}_j$,
which proves the result.
\end{proof}

\begin{theorem}\label{thm:balanced2GRIP}
The associated \multipart matrix for a stratified and balanced staged tree satisfies the GRIP.
\end{theorem}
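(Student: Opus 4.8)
The plan is to check the three defining conditions of the GRIP (Definition \ref{def:GRIP}) for $A = A_\T$, fixing $1\le \ell\le k-1$ and setting $B = B_\ell = \Cup_{n=1}^{\ell}A^n$ and $C = A^{\ell+1}$. Two of the three conditions come for free: Lemma \ref{lem:treeWC} gives that $A$ is well-connected, which is condition (1) by Remark \ref{rem:EquivGRIP}, and since $(\T,\theta)$ is a staged tree the associated tree $\T_A$ is staged, which is condition (2) by Remark \ref{rem:EquivGRIP}. So the entire content of the theorem is condition (3): every row of $B\Cap C$ must lie in $\rowspan(A^{1,\dots,\ell})$.

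First I would rewrite $B\Cap C$ in the language of $\T_A$. The rows of $B$ correspond to the level-$\ell$ vertices of $\T_A$ and the rows of $C$ to the level-$(\ell+1)$ labels, with a label connected to a vertex $w$ exactly when it lies in the floret $\F_w$; hence the matrix florets of $C$ are precisely the stages of $\T_A$ at level $\ell$, and the row $\delta_t$ of $B\Cap C$ indexed by stage $t$ is the indicator of those columns whose root-to-leaf path passes through a level-$\ell$ vertex in stage $t$. Writing $u_s$ for the child of a vertex $u$ along the edge labelled $s$, each level-$\ell$ label $s^\ell_i$ determines the \emph{orbit} $O_i = \{u_{s^\ell_i} : u\in\sigma(i)\}$, where $\sigma(i)$ is the unique stage at level $\ell-1$ whose floret contains $s^\ell_i$. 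These orbits partition the level-$\ell$ vertices, and a direct count shows $\sum_{w\in O_i}(\text{indicator of paths through }w) = \alpha^\ell_i$, since a path with level-$\ell$ label $s^\ell_i$ passes through exactly one of the edges leaving $\sigma(i)$ with that label.

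The crux is then the claim that each orbit $O_i$ is contained in a single level-$\ell$ stage; equivalently, if $u,u'$ lie in the same stage at level $\ell-1$ and $s\in\F_u=\F_{u'}$, then $u_s$ and $u'_s$ lie in the same stage. Granting this, every stage $t$ is a disjoint union of orbits, so $\delta_t = \sum_{i\in J_t}\alpha^\ell_i$, where $J_t$ collects the labels whose orbit falls in stage $t$; this exhibits $\delta_t$ in $\rowspan(A^\ell)\subseteq\rowspan(A^{1,\dots,\ell})$ and settles condition (3).

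Proving this claim is where I expect the real work to lie, and it is exactly where balancedness is used. Since $\F_u=\F_{u'}$, applying the balanced identity \eqref{eq:balancedcondition} to the children reached by $s$ and by any second label $s'\in\F_u$ gives $t(u_s)\,t(u'_{s'}) = t(u'_s)\,t(u_{s'})$, so $t(u_s)/t(u'_s)$ is a constant $\rho$ independent of $s$ and thus $t(u_s) = \rho\,t(u'_s)$ as polynomials. If $u_s$ and $u'_s$ were in different stages, their florets $\F_{u_s}$ and $\F_{u'_s}$ would be disjoint because $\T_A$ is staged. Stratification forces every edge label to live at a single level, so every monomial of $t(u_s)$ contains exactly one level-$(\ell+1)$ label and that label lies in $\F_{u_s}$, and likewise for $t(u'_s)$ with $\F_{u'_s}$; as these florets are disjoint, no scalar multiple of a monomial of $t(u'_s)$ can be a monomial of $t(u_s)$, contradicting $t(u_s)=\rho\,t(u'_s)$. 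Hence the florets coincide and $u_s,u'_s$ share a stage. The one case the balanced identity does not reach is a singleton floret $\F_u$; there I would either invoke that contracting singleton florets leaves the model unchanged \cite{AD19}, or propagate the proportionality $t(u_s)=\rho\,t(u'_s)$ downward from the balanced identity at an ancestor stage.
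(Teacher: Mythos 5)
Your reduction to condition (3) of Definition \ref{def:GRIP} and your translation of $B\Cap C$ into the language of $\T_A$ are both correct, as is the orbit identity $\sum_{w\in O_i}(\text{indicator of paths through }w)=\alpha^\ell_i$. The gap is that your key claim is false: in a balanced, stratified staged tree it is \emph{not} true that $u_s$ and $u'_s$ must share a stage whenever $u$ and $u'$ do, so a level-$\ell$ stage need not be a union of orbits and the row $\delta_t$ need not lie in $\rowspan(A^\ell)$. Concretely, take the level-$3$ tree with root floret $\{a,b\}$, put $u=v_a$ and $u'=v_b$ in a common stage with floret $\{s,s'\}$, and set $\F_{u_s}=\F_{u_{s'}}=\{c,d\}$ and $\F_{u'_s}=\F_{u'_{s'}}=\{e,f\}$. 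This tree is stratified and staged, and it is balanced because the only nontrivial instance of \eqref{eq:balancedcondition} reads $(c+d)(e+f)=(e+f)(c+d)$; yet $u_s$ and $u'_s$ lie in different stages, the orbit $O_s=\{u_s,u'_s\}$ meets two stages, and the row of $B_2\Cap A^3$ indexed by the stage $\{u_s,u_{s'}\}$ equals the row $a$ of $A^1$, which lies in $\rowspan(A^{1,2})$ but not in $\rowspan(A^2)$. The precise error in your derivation is the phrase ``is a constant $\rho$'': balancedness only gives that the ratio $t(u_s)/t(u'_s)$ of polynomials is independent of the choice of $s$, not that it is a scalar. In the example $\rho=(c+d)/(e+f)$, the identity $t(u_s)\,t(u'_{s'})=t(u'_s)\,t(u_{s'})$ holds with $\F_{u_s}$ and $\F_{u'_s}$ disjoint, so the disjoint-support contradiction never materializes.

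Because the intermediate statement you aim for is false, the argument cannot be repaired by treating singleton florets separately: condition (3) genuinely requires membership in $\rowspan(A^{1,\dots,\ell})$ rather than in $\rowspan(A^{\ell})$ alone. The paper's proof takes a different route at exactly this point: it cites \cite[Proposition~4.5]{AD19}, which uses balancedness to show that the toric ideal $I(A^{1,\dots,\ell})$ is multihomogeneous with respect to the grading given by $D=B_{\ell}\Cap A^{\ell+1}$; each row of $D$ is therefore orthogonal to the integer kernel of $A^{1,\dots,\ell}$ and hence lies in $(\ker(A^{1,\dots,\ell}))^{\perp}=\rowspan(A^{1,\dots,\ell})$. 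If you want a self-contained combinatorial proof, you would need to exhibit $\delta_t$ as a $\Z$-linear combination of rows drawn from \emph{all} of $A^1,\dots,A^{\ell}$, which the example above shows is unavoidable and is substantially more delicate than the orbit picture suggests.
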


\begin{proof}
By Lemma \ref{lem:treeWC} we know that the associated \multipart matrix $A$ is well-connected, and satisfies the floret condition by assumption. Thus it remains to show that for each $1\leq \ell < k$ we have that the rows of $B_{\ell} \Cap A^{\ell+1}$ lie in the rowspan of $A^{1,\dots,\ell}$ where $B^\ell = \Cup_{m=1}^\ell A^\ell$.

In the proof of \cite[Theorem~2.5]{AD19}, the authors compute the generators of the toric ideal $I(A^{1,\dots,\ell})$. In order to accomplish this, they show in \cite[Proposition~4.5]{AD19} that $I(A^{1,\dots,\ell})$ is multihomogeneous with respect to the grading given by $D$. In particular, this implies that for all $b$ in the integer kernel of $A^{1,\dots,\ell}$ and all rows $\alpha^D$ of $D$, $\alpha^D \cdot b = 0$. Since $A^{1,\dots,\ell}$ is an integer matrix, there exists an integer basis for its kernel. Thus for each row $\alpha^D$ of $D$, we have $\alpha^D \in (\ker(A^{1,\dots,\ell}))^{\perp} = \mathrm{rowspan}(A^{1,\dots,\ell})$, as needed.
\end{proof}

Thus stratified and balanced staged tree models lie in the family of partition models whose matrix satisfies the GRIP, which by Theorem \ref{thm:GRIPOneCycle} implies the following corollary.

\begin{cor}\label{cor:BalancedOneCycle}
For the \multipart matrix associated with a stratified and balanced staged tree, the IPS algorithm results in the MLE after one cycle.
\end{cor}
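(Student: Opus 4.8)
The plan is to obtain this corollary by directly composing the two main theorems already established, since all the substantive work has been carried out upstream. Given a stratified and balanced staged tree $(\T,\theta)$, the associated \multipart matrix $A_\T$ is a well-defined object (the stratified hypothesis guarantees that $\phi_\T$ is multilinear and homogeneous, so $A_\T$ genuinely has the partition structure required). The first step is to invoke Theorem \ref{thm:balanced2GRIP}, which asserts precisely that $A_\T$ satisfies the GRIP. The second step is to apply Theorem \ref{thm:GRIPOneCycle} to the matrix $A_\T$: since it satisfies the GRIP, the IPS algorithm applied to $A_\T$ returns the MLE after a single cycle. Chaining these two implications yields the statement immediately.

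Because the corollary is an immediate consequence of results proved just before it, there is no genuine obstacle remaining at this stage; the difficulty has already been dispatched inside the two cited theorems. For completeness I would note where that difficulty actually resides. Theorem \ref{thm:balanced2GRIP} decomposes into verifying the three defining conditions of the GRIP (Definition \ref{def:GRIP}): well-connectedness is handled by Lemma \ref{lem:treeWC}, whose proof is the genuinely delicate part, as it translates the balanced condition $t(v')t(w'') = t(w')t(v'')$ into the multiplicative relation $c^\ell_j c^{\ell-1}_{\tilde\jmath} = c^\ell_{\tilde\jmath} c^{\ell-1}_j$ on column weights by passing to sums of coefficients $\sigma(t(v))$; the floret condition holds by hypothesis because $\T_A$ is a staged tree (Remark \ref{rem:EquivGRIP}); and the rowspan condition follows from the multihomogeneity of $I(A^{1,\dots,\ell})$ established in \cite{AD19}. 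Theorem \ref{thm:GRIPOneCycle} in turn rests on the explicit MLE formula of Corollary \ref{cor:GRIPMLE} together with the inductive argument powered by the toric fiber product structure of Theorem \ref{thm:IdealEqualsTFP} and Lemma \ref{lem:GRIPMLE}.

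Thus the only thing to write is the two-line deduction. The proof I would give is: by Theorem \ref{thm:balanced2GRIP}, the associated \multipart matrix $A_\T$ satisfies the GRIP, and by Theorem \ref{thm:GRIPOneCycle} any such matrix yields the MLE after one cycle of IPS; combining these gives the claim.
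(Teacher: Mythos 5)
Your proof is correct and is exactly the paper's own argument: the corollary is obtained by chaining Theorem \ref{thm:balanced2GRIP} (the associated matrix satisfies the GRIP) with Theorem \ref{thm:GRIPOneCycle} (GRIP implies one-cycle convergence). Nothing further is needed.
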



\begin{remark}\label{rem:stagedtreeMLE}
Consider the model $\Mcal_\T$ obtained from a stratified staged tree $\T$ with $k$ levels. Denote the florets of $\T$ by $\Fcal^\ell_{i}$ where $1\leq \ell \leq k$ indexes the levels of $\T$ and $1\leq i\leq n_{\ell}$ indexes the edge labels of the $\ell$th level of $\T$; thus $\Fcal^\ell_i$ is the floret associated with edge label $s^\ell_i$. As discussed in Remark \ref{rem:MatrixFloretFull} there is a one to one correspondence between the florets of $\T$ and of $A_\T$. Let $A_\T$ be the associated \multipart matrix with $\T$, and $d$ be a data vector. Then by \cite[Proposition 11]{DMS19} the MLE of $\Mcal_\T$ is given by
\begin{equation}\label{eq:StagedTreeMLE}
\p(d)=\phi_{A_\T}\left(s^1_1(d),\hdots,s_{n_1}^1(d),s_1^2(d),\hdots,s^k_{n_k}(d)\right)
\end{equation}
where $s^\ell_i(d) = \frac{\alpha^\ell_i(d)}{\displaystyle\sum_{\alpha_{i'}^\ell\in \Fcal_i^\ell}a_{i'}^\ell(d)}$.
\end{remark}

We now show that the GRIP is, in some sense, a characterization of stratified and balanced staged trees. All \multipart matrices that satisfy the GRIP, in fact, have an associated tree that is a stratified and balanced staged tree.

\begin{theorem}\label{thm:GRIP2balanced}
For any \multipart matrix $A$ satisfying the GRIP, the associated tree $\T_A$ is a stratified and balanced staged tree.
\end{theorem}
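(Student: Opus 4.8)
The tree $\T_A$ is constructed level by level from the partitions of $A$, so every root-to-leaf path has exactly $k$ edges and all leaves sit at level $k$; moreover the $\ell$th level florets (Definition \ref{def:MatrixFloretFull}) are read off from the pair $B_\ell, A^{\ell+1}$, so two vertices can share a floret only if they were produced at the same level. Hence $\T_A$ is automatically stratified. That $\T_A$ is a staged tree is immediate from GRIP condition (2): by Remark \ref{rem:EquivGRIP}, the floret condition on each $B_\ell A^{\ell+1}$ is equivalent to $\T_A$ being staged. So the entire content of the theorem is the \emph{balanced} condition, and this is where I would concentrate all the effort.

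My plan for balancedness is to pass to the toric picture via Proposition \ref{prop:BalancediffToric}: $\T_A$ is balanced if and only if $\Mcal_{\T_A} = \overline{\Mcal}_{\T_A}$. Deleting repeated columns of $A$ does not change the tree (repeated columns collapse to the same path), and the resulting matrix $\bar A$ again satisfies the GRIP, as observed after Proposition \ref{prop:RemoveRepeatMLE}. I may therefore assume $A = A_{\T_A} = \bar A$, so that $\overline{\Mcal}_{\T_A}$ is exactly the toric model $\Mcal_A$. It then suffices to show that the staged tree model $\Mcal_{\T_A} = \mathrm{Im}(\phi_{\T_A})$ fills out the toric variety $\overline{\Mcal}_A$.

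The key tool is Proposition \ref{prop:Multihom}, which already extracts from GRIP condition (3) precisely the multihomogeneity that governs balancedness: for each $\ell$, the ideal $I(A^{1,\dots,\ell})$ is multihomogeneous with respect to the grading given by $D_\ell = B_\ell \Cap A^{\ell+1}$, whose rows are the level-$\ell$ stage indicators. This is exactly the hypothesis that \cite[Prop.~4.5]{AD19} \emph{deduces} from balancedness in the direction used in Theorem \ref{thm:balanced2GRIP}. Running that correspondence in reverse---equivalently, invoking the characterization of \cite[Thm.~2.5]{AD19} together with \cite[Thm.~10]{DG20}---converts multihomogeneity at every level into the equality $\Mcal_{\T_A} = \overline{\Mcal}_{\T_A}$, which by Proposition \ref{prop:BalancediffToric} is balancedness. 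The delicate point here is that one must apply Proposition \ref{prop:Multihom} to the fully reduced matrices $\bar A^{1,\dots,\ell}$ (which it permits, being stated for arbitrary compressions) so that the gradings match those attached to $\T_A$ in \cite{AD19}.

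A second, more self-contained route, which I would use to make the argument independent of the exact form of the cited results, is a downward induction on the level proving that $t(v) = t(w)$ for every pair of vertices $v,w$ in a common stage; by \cite[Lem.~2.13]{AD19} this equality of interpolating polynomials already forces balancedness. Writing $t(v) = \sum_{s^{\ell+1}_a \in \F_v} s^{\ell+1}_a\, t(v_a)$ and using $\F_v = \F_w$, the induction reduces to the single combinatorial claim that if $v,w$ lie in the same stage and $v_a, w_a$ are their children along a common edge label $s^{\ell+1}_a$, then $v_a$ and $w_a$ again lie in the same stage. I expect this stage-propagation claim to be the main obstacle: it \emph{fails} for general stratified staged trees (which is exactly why not every such tree is balanced), so closing it must use GRIP condition (3) in an essential way. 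Concretely, one has to show that the rowspan membership of the rows of $B_\ell \Cap A^{\ell+1}$ in $\mathrm{rowspan}(A^{1,\dots,\ell})$ forces the floret of a vertex to depend only on its stage and on the label of its incoming edge, and not on the remainder of its root path. Translating that rowspan condition into this statement about the floret of $v_a$ is the crux, and is where I anticipate the real work.
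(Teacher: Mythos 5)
Your reduction is the right one---stratified and staged are immediate, repeated columns can be discarded, and by Proposition \ref{prop:BalancediffToric} everything comes down to showing $\Mcal_{\T_A} = \overline{\Mcal}_{\T_A} = \Mcal_{\bar{A}}$---but neither of your two routes actually closes this. Route 1 is circular: GRIP condition (3) \emph{is} the multihomogeneity statement of Proposition \ref{prop:Multihom}, and the results you invoke (\cite[Theorem~2.5]{AD19}, \cite[Theorem~10]{DG20}) all take balancedness as a \emph{hypothesis} and deduce consequences from it (multihomogeneity, generators of the toric ideal, equality of the models). ``Running the correspondence in reverse'' is precisely the implication the theorem asks you to prove, and nothing you cite supplies it; note also that this route uses only conditions (2) and (3) of Definition \ref{def:GRIP}, whereas the well-connectedness condition (1) enters the paper's argument essentially. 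Route 2 you leave explicitly open at its crux (the stage-propagation claim), and it moreover aims at a strictly stronger conclusion: equality of interpolating polynomials $t(v)=t(w)$ for same-stage vertices is sufficient for balancedness by \cite[Lemma 2.13]{AD19} but is not equivalent to it, so even granting stage-propagation you would still owe an argument that the GRIP forces this stronger property rather than only the cross-ratio condition \eqref{eq:balancedcondition}.

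The idea you are missing is that balancedness should be extracted from the explicit MLE formula rather than from the ideal-theoretic side. Corollary \ref{cor:GRIPMLE} and Proposition \ref{prop:RemoveRepeatMLE} show that for a GRIP matrix the MLE of $\Mcal_{\bar{A}}$ is $\phi_{\bar{A}}\left(s^1_1(\bar{d}),\dots,s^k_{n_k}(\bar{d})\right)$, which is literally the same rational map as the MLE of the staged tree model $\Mcal_{\T_A}$ recorded in Remark \ref{rem:stagedtreeMLE} (from \cite[Proposition 11]{DMS19}). Since a rational MLE map parametrizes its model (its image is the model, as the MLE of a point already in the model is that point), the two models have the same image and hence coincide: $\overline{\Mcal}_{\T_A} = \Mcal_{\bar{A}} = \Mcal_{\T_A}$. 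Proposition \ref{prop:BalancediffToric} then gives balancedness. This is the paper's proof, and it is short precisely because the hard work was already done in establishing the MLE formula in Section \ref{sec:GRIP}.
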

\begin{proof}
Consider the matrix $\bar{A}$ obtained by removing the repeated columns of $A$. The associated tree $\T_{\bar{A}}$ is equivalent to $\T_A$, and hence is a stratified staged tree. Since there are no repeated columns, we have that the toric model for the staged tree $\T_A$ is given by the partition model of $\bar{A}$, i.e. $\widetilde{\Mcal}_{\T_A} = \Mcal_{\bar{A}}$.

By Proposition \ref{prop:BalancediffToric}, the staged tree model $\Mcal_{\T_A}$ is equal to $\Mcal_{\bar{A}}$ if and only if $\T_A$ is a balanced staged tree. By Proposition \ref{prop:RemoveRepeatMLE} the MLE of $\Mcal_{\bar{A}}$ is equal to the MLE of the staged tree model $\T_A$ (Remark \ref{rem:stagedtreeMLE}). Since the maximum likelihood estimator also parameterizes the model, this implies that
$\widetilde{\Mcal}_{\T_A} = \Mcal_{\bar{A}} = \Mcal_{\T_A}.$
Hence by Proposition \ref{prop:BalancediffToric}, $\T_A$ is balanced.
\end{proof}

Putting the previous results together, we can say that a \multipart matrix without repeated columns satisfies the GRIP if and only if the associated tree is a stratified and balanced staged tree. We also note that this implies that the IPS algorithm provides another method for determining whether a stratified staged tree is balanced. If the IPS algorithm produces the MLE exactly in one cycle for an arbitrary data vector and, in each case, the MLE is given by \eqref{eq:StagedTreeMLE}, then the stratified staged tree is balanced. Otherwise the stratified staged tree is not balanced via Corollary \ref{cor:BalancedOneCycle}.

\section{Discussion}\label{sec:disc}

In this work we explored the relationship between \multipart matrices, their associated partition model, and the IPS algorithm. In particular we showed equivalences between the GRIP and other conditions from disparate areas of algebraic statistics and examined their implications for exact convergence under the IPS algorithm. We conclude with a discussions of some natural questions that arise from these results.

\begin{question}
For a \multipart matrix $A$ with $k$ partitions, does the requirement that the IPS algorithm produces the MLE for $A^{1,\hdots,\ell}$ at each step imply that $A$ must satisfy the GRIP?
\end{question}

In Remark \ref{rem:Horn}, we claim that IPS producing the MLE in 2 steps for a \multipart matrix with 2 partitions is enough to guarantee that the GRIP is satisfied. While the GRIP is sufficient for the algorithm to produce the MLE for $A^{1,\hdots,\ell}$ at each step, it is possible that the reverse logical direction holds.

\begin{question}
Let $\Mcal$ be a log-linear partition model with rational MLE. Does there always exist a \multipart matrix $A$ representing $\Mcal$ that satisfies the GRIP?
\end{question}

It is clear that the matrix representation of a particular model affects the IPS algorithm. In \cite{CS20}, the authors show sufficient conditions for 2-way quasi-independence models to have rational MLE. Since $k$-way quasi-independence models are just partition models without repeated columns, it would be interesting to see if these results can be used to show that every such matrix  has a representation satisfying the GRIP. Indeed our preliminary computations indicate that this is true.

Finally there are many results and questions related to the convergence of the generalized IPS algorithm on log-linear models and its connections to tools from algebraic statistics (see \cite[Sec. 5]{AKRS21} and \cite[Sec. 7.3]{drton2008}). Our work falls adjacent to this line of inquiry and that it would be worthwhile to investigate whether tools that have recently produced results in this area connect to our work on the generalized running intersection property.

\section*{Acknowledgments}
We would like to thank Bernd Sturmfels, Nihat Ay, Ji\u{r}\'i Vomlel, Eliana Duarte, Orlando Marigliano and Carlos Am\'endola for helpful discussions and suggestions regarding this project.  Jane Coons was partially supported by the US National Science Foundation (DGE
1746939) and Carlotta Langer was supported by the priority programme “The Active Self” (SPP 2134) of the Deutsche Forschungsgemeinschaft. This work was also supported in part  by the Max Planck Institute for Mathematics in the Sciences and the Data Institute, University of San Francisco.

\section*{Declaration of interests}
The authors declare that they have no known competing financial interests or personal relationships that could have appeared to influence the work reported in this paper.

\bibliographystyle{acm}
\small
\bibliography{arXivVersion-IPS}

\begin{thebibliography}{10}

\bibitem{ABB19}
{\sc Am\'{e}ndola, C., Bliss, N., Burke, I., Gibbons, C.~R., Helmer, M.,
  Ho\c{s}ten, S., Nash, E.~D., Rodriguez, J.~I., and Smolkin, D.}
\newblock The maximum likelihood degree of toric varieties.
\newblock {\em Journal of Symbolic Computation 92\/} (2019), 222--242.

\bibitem{AKRS21}
{\sc Am\'endola, C., Kohn, K., Reichenbach, P., and Seigal, A.}
\newblock Toric invariant theory for maximum likelihood estimation in
  log-linear models.
\newblock {\em Algebraic Statistics 12}, 2 (2021).

\bibitem{AKK20}
{\sc Am\'{e}ndola, C., Kosta, D., and Kubjas, K.}
\newblock Maximum likelihood estimation of toric {F}ano varieties.
\newblock {\em Algebraic Statistics 11}, 1 (2020), 5--30.

\bibitem{AD19}
{\sc Ananiadi, L., and Duarte, E.}
\newblock Gr\"{o}bner bases for staged trees.
\newblock {\em Algebraic Statistics 12}, 1 (2021), 1--20.

\bibitem{ay2017}
{\sc Ay, N., Jost, J., L\^{e}, H.~V., and Schwachh\"{o}fer, L.}
\newblock {\em Information Geometry}, vol.~64.
\newblock Springer International Publishing, 2017.

\bibitem{Berman2020}
{\sc Berman, R.~J.}
\newblock The {S}inkhorn algorithm, parabolic optimal transport and geometric
  monge–ampère equations.
\newblock {\em Numerische Mathematik 145\/} (2020), 771--836.

\bibitem{Brown1993}
{\sc Brown, J.~B., Chase, P.~J., and Pittenger, A.~O.}
\newblock Order independence and factor convergence in iterative scaling.
\newblock {\em Linear Algebra and its Applications 190\/} (1993), 1--38.

\bibitem{CHKS06}
{\sc Catanese, F.~F., Ho\c{s}ten, S., Khetan, A., and Sturmfels, B.}
\newblock The maximum likelihood degree.
\newblock {\em American Journal of Mathematics 128}, 3 (2006), 671--697.

\bibitem{Chang2021}
{\sc Chang, S., Pierson, E., Koh, P., Geradin, J., Redbird, B., Grusky, D., and
  Leskovec, J.}
\newblock Mobility network models of {C}{O}{V}{I}{D}-19 explain inequities and
  inform reopening.
\newblock {\em Nature 589\/} (2021), 82--87.

\bibitem{CGS18}
{\sc Collazo, R.~A., G\"{o}rgen, C., and Smith, J.~Q.}
\newblock {\em Chain Event Graphs}.
\newblock {CRC} Press, 2018.

\bibitem{CMR20}
{\sc Coons, J.~I., Marigliano, O., and Ruddy, M.}
\newblock Maximum likelihood degree of the two-dimensional linear {G}aussian
  covariance model.
\newblock {\em Algebraic Statistics 11}, 2 (2020), 107--123.

\bibitem{CS20}
{\sc Coons, J.~I., and Sullivant, S.}
\newblock Quasi-independence models with rational maximum likelihood estimator.
\newblock {\em Journal of Symbolic Computation 104\/} (2021), 917--941.

\bibitem{csiszar1975}
{\sc Csisz\'{a}r, I.}
\newblock I-divergence geometry of probability distributions and minimization
  problems.
\newblock {\em The Annals of Probability 3\/} (1975).

\bibitem{csiszar2004}
{\sc Csisz\'{a}r, I., and Shields, P.~C.}
\newblock {\em Information Theory and Statistics: A Tutorial}.
\newblock Foundations and Trends in Communications and Information Theory. Now
  Publishers Inc, 2004.

\bibitem{darroch1972}
{\sc Darroch, J., and Ratcliff, D.}
\newblock Generalized iterative scaling for log-linear models.
\newblock {\em The Annals of Mathematical Statistics\/} (1972).

\bibitem{Deming1940}
{\sc Deming, W.~E., and Stephan, F.~F.}
\newblock On a least squares adjustment of a sampled frequency table when the
  expected marginal totals are known.
\newblock {\em The Annals of Mathematical Statistics\/} (1940).

\bibitem{drton2008}
{\sc Drton, M., Sturmfels, B., and Sullivant, S.}
\newblock {\em Lectures on {A}lgebraic {S}tatistics}, vol.~39.
\newblock Springer Science \& Business Media, 2008.

\bibitem{DG20}
{\sc Duarte, E., and G\"{o}rgen, C.}
\newblock Equations defining probability tree models.
\newblock {\em Journal of Symbolic Computation 99\/} (2020), 127--146.

\bibitem{DMS19}
{\sc Duarte, E., Marigliano, O., and Sturmfels, B.}
\newblock Discrete statistical models with rational maximum likelihood
  estimator.
\newblock {\em Bernoulli 27}, 1 (2021).

\bibitem{DS21}
{\sc Duarte, E., and Solus, L.}
\newblock A new characterization of discrete decomposable models, 2021.
\newblock arXiv:2105.05907.

\bibitem{ET09}
{\sc Endo, Y., and Takemura, A.}
\newblock Iterative proportional scaling via decomposable submodels for
  contingency tables.
\newblock {\em Computational Statistics \& Data Analysis 53}, 4 (2009),
  966--978.

\bibitem{BGRS18}
{\sc G\"{o}rgen, C., Bigatti, A., Riccomagno, E., and Smith, J.~Q.}
\newblock Discovery of statistical equivalence classes using computer algebra.
\newblock {\em International Journal of Approximate Reasoning 95\/} (2018),
  167--184.

\bibitem{GS18}
{\sc G\"{o}rgen, C., and Smith, J.~Q.}
\newblock Equivalence classes of staged trees.
\newblock {\em Bernoulli 24}, 4A (2018), 2676--2692.

\bibitem{GMN21}
{\sc Görgen, C., Maraj, A., and Nicklasson, L.}
\newblock Staged tree models with toric structure.
\newblock {\em Journal of Symbolic Computation 113\/} (2022), 242 -- 268.

\bibitem{SH74}
{\sc Haberman, S.~J.}
\newblock {\em The Analysis of Frequency Data}.
\newblock University of Chicago Press, 1974.

\bibitem{Harris1998}
{\sc Harris, J., and Stoecker, H.}
\newblock {\em The Handbook of Mathematics and Computational Science}.
\newblock Springer, 1998.

\bibitem{Hauer2019}
{\sc Hauer, M.~E., Hardy, R.~D., Mishra, D.~R., and Pippin, J.~S.}
\newblock No landward movement: examining 80 years of population migration and
  shoreline change in {L}ouisiana.
\newblock {\em Population and Environment 40\/} (2019), 369--387.

\bibitem{Huh13}
{\sc Huh, J.}
\newblock The maximum likelihood degree of a very affine variety.
\newblock {\em Compositio Mathematica 149}, 8 (2013), 1245--1266.

\bibitem{Huh14}
{\sc Huh, J.}
\newblock Varieties with maximum likelihood degree one.
\newblock {\em Journal of Algebraic Statistics 5}, 1 (2014), 1--17.

\bibitem{HS14}
{\sc Huh, J., and Sturmfels, B.}
\newblock Likelihood geometry.
\newblock In {\em Lecture Notes in Mathematics}. Springer International
  Publishing, 2014, pp.~63--117.

\bibitem{Jaynes}
{\sc Jaynes, E.~T.}
\newblock Information theory and statistical mechanics.
\newblock {\em The Physical Review 106\/} (1957).

\bibitem{Klenke2007}
{\sc Klenke, A.}
\newblock {\em Probability Theory: A Comprehensive Course}.
\newblock Springer, 2007.

\bibitem{lauritzen1996}
{\sc Lauritzen, S.~L.}
\newblock {\em Graphical Models}.
\newblock Clarendon Press, 1996.

\bibitem{Russel2011}
{\sc Millar, R.}
\newblock {\em Maximum likelihood estimation and inference: with examples in R,
  SAS and ADMB}.
\newblock Wiley, 2011.

\bibitem{Pachter2005}
{\sc Pachter, L., and Sturmfels, B.}, Eds.
\newblock {\em Algebraic Statistics for Computational Biology}.
\newblock Cambridge University Press, New York, 2005, ch.~§1.2.

\bibitem{Pey2019}
{\sc Peyré, G., and Cuturi, M.}
\newblock Computational optimal transport.
\newblock {\em Foundations and Trends in Machine Learning 11}, 5-6 (2019),
  355--607.

\bibitem{Rauh2011}
{\sc Rauh, J.}
\newblock Optimally approximating exponential families.
\newblock {\em Kybernetika 49\/} (2011).

\bibitem{SA08}
{\sc Smith, J.~Q., and Anderson, P.~E.}
\newblock Conditional independence and chain event graphs.
\newblock {\em Artificial Intelligence 172}, 1 (2008), 42--68.

\bibitem{STZ20}
{\sc Sturmfels, B., Timme, S., and Zwiernik, P.}
\newblock Estimating linear covariance models with numerical nonlinear algebra.
\newblock {\em Algebraic Statistics 11}, 1 (2020), 31--52.

\bibitem{tfp07}
{\sc Sullivant, S.}
\newblock Toric fiber products.
\newblock {\em Journal of Algebra 316}, 2 (2007), 560--577.

\bibitem{sullivant2018}
{\sc Sullivant, S.}
\newblock {\em Algebraic Statistics}, vol.~194.
\newblock American Mathematical Soc., 2018.

\bibitem{JV99}
{\sc Vomlel, J.}
\newblock {\em Methods of Probabilistic Knowledge Integration}.
\newblock Dissertation, Czech Technical University, Faculty of Electrical
  Engineering, 1999.

\bibitem{XSS16}
{\sc Xu, P.-F., Sun, J., and Shan, N.}
\newblock Local computations of the iterative proportional scaling procedure
  for hierarchical models.
\newblock {\em Computational Statistics \& Data Analysis 95\/} (2016), 17--23.

\bibitem{Yiannoutsose2013906118}
{\sc Yiannoutsos, C.~T., Halverson, P.~K., and Menachemi, N.}
\newblock Bayesian estimation of {S}{A}{R}{S}-{C}o{V}-2 prevalence in indiana
  by random testing.
\newblock {\em Proceedings of the National Academy of Sciences 118}, 5 (2021).

\end{thebibliography}

\end{document}